\newtheorem{theorem}{Theorem}[section]
\newtheorem{lemma}[theorem]{Lemma}
\newtheorem{corollary}[theorem]{Corollary}
\newtheorem{assumption}{Assumption}
\theoremstyle{definition}
\newtheorem{remark}[theorem]{Remark}
\numberwithin{equation}{section}
\newcommand{\dist}{\mathrm{dist}}      
\newcommand{\diam}{\mathrm{diam}}      
\newcommand\restr[2]{{
  \left.\kern-\nulldelimiterspace 
  #1 
  \vphantom{\big|} 
  \right|_{#2} 
  }}
\begin{document}
\title[The Calderón problem revisited]{The Calderón problem revisited:\\
Reconstruction with resonant perturbations}

\author[Ghandriche and Sini]{Ahcene Ghandriche  $^*$ and Mourad Sini$^{\ddag}$}
\thanks{$^*$ Nanjing Center for Applied Mathematics, Nanjing, 211135, People’s Republic of China. Email: gh.hsen@njcam.org.cn}
\thanks{$^{\ddag}$ RICAM, Austrian Academy of Sciences, Altenbergerstrasse 69, A-4040, Linz, Austria. Email: mourad.sini@oeaw.ac.at. This author is partially supported by the Austrian Science Fund (FWF): P 30756-NBL}


\allowdisplaybreaks

\begin{abstract}

The original Calderón problem consists in recovering the potential (or the conductivity) from the knowledge of the related Neumann to Dirichlet map (or Dirichlet to Neumann map). Here, we first perturb the medium by injecting small-scaled and highly heterogeneous particles. Such particles can be bubbles or droplets in acoustics or nanoparticles in electromagnetism. They are distributed, periodically for instance, in the whole domain where we want to do reconstruction. Under critical scales between the size and contrast, these particles resonate at specific frequencies that can be well computed. 
Using incident frequencies that are close to such resonances, we show that
\bigskip

\begin{enumerate}
\item the corresponding Neumann to Dirichlet map of the composite converges to the one of the homogenized medium. In addition, the equivalent coefficient, which consist in the sum of the original potential and the effective coefficient, is negative valued with a controlable amplitude.
\medskip

\item as the equivalent coefficient is negative valued,  then we can linearize the corresponding Neumann to Dirichlet map using the effective coefficient's amplitude.
\medskip

\item from the linearized Neumann to Dirichlet map, we reconstruct the original potential using explicit complex geometrical optics solutions (CGOs).

\end{enumerate}

\end{abstract}

\subjclass[2010]{35R30, 35C20}
\keywords{Inverse problems, Neumann-to-Dirichlet map, Newtonian operator, Acoustic imaging, Asymptotic expansions, Spectral theory, Lippmann-Schwinger equation, droplets.}

\maketitle

\tableofcontents
\addtocontents{toc}{~\hfill\textbf{Page}\par}
\addtocontents{toc}{\setcounter{tocdepth}{2}}
\section{Introduction and statement of the results}
\subsection{General introduction}

The original Calderón problem stated in the acoustic framework reads as follows. Let $n:=c^{-1}$ be the index of refraction, where $c$ stands for the speed of acoustic sound. In turn, this speed of sound is given by $c:=\sqrt{\dfrac{k}{\rho}}$ where $\rho$ is the mass density and $k$ is the bulk modulus. In the time-harmonic regime, the propagation of the acoustic waves is modeled by:
\begin{align}\label{EquaKg-introdution}
\begin{cases}  
\left( \Delta  + \omega^{2} \, n^2(\cdot) \right) \, p^{f} = 0 \quad \text{in} \quad  \Omega,  \\ 
\qquad \quad \qquad \, \partial_{\nu} p^{f} \; =  f \;\;\;  \text{on} \quad \partial \Omega. 
\end{cases}
\end{align}
where $p^f$ is the acoustic pressure generated by the applied source $f$. The Neumann to Dirichlet (NtD) operator $\Lambda_c$ corresponds to any $f \in \mathbb{H}^{-\frac{1}{2}}(\partial \Omega)$, the trace on $\partial \Omega$ of the induced pressure $p^{f}$, i.e. $\Lambda_{c}(f) := p^{f}_{|_{\partial \Omega}}$.
The Calderón problem consists in recovering the sound speed $c$ from the knowledge of the NtD map $\Lambda_c$. According to the model $(\ref{EquaKg-introdution})$, the mass density $\rho$ is assumed to be a constant, while the bulk modulus $k$ is variable in a smooth domain $ \Omega$. We assume $k$ to be a $\mathbb{W}^{1, \infty}(\Omega)$ \footnote{This condition can be replace by an $\mathbb{L}^{\infty}$-regularity.} and positive function and $\Omega$ of class $C^2$. In addition, we assume that $(\ref{EquaKg-introdution})$ has a unique solution, i.e. $\omega^2$ is not an eigenvalue of $-n^{-2}\Delta$ with zero Neumann boundary condition on $\partial \Omega$. 
\bigskip

The Calderón problem was the object of an intensive study since the early 80's. The reader can see the following references for more information \cite{colton2019inverse, Isakov-book, Ramm-book} and \cite{Uhlmann-Review}. A model of particular interest is the EIT (Electrical Impedance Tomography) problem, also called Calderón's problem, which consists in identifying the conductivity $\gamma$ using Cauchy data $(u_{|_{\partial \Omega}}, \gamma \nabla u \cdot \nu_{|_{\partial \Omega}})$
 of the solution of equation $\nabla \cdot \gamma \nabla u=0$, in $\Omega \subset \mathbb{R}^3$, where $\nu$ is the outward unit normal vector to $\partial \Omega$. The uniqueness question of this problem is reduced to the construction of the so-called complex geometrical optics solutions (in short CGO's), see \cite{JSGU}, where $\gamma$ is a positive 
$C^2$-smooth function. The regularity of $\gamma$ is reduced to $C^{\frac{3}{2}+\epsilon}, \epsilon >0$,
 in \cite{Brown-1996}, then to $\mathbb{W}^{\frac{3}{2}, \infty}$  in \cite{paivarinta2003complex} and to $\mathbb{W}^{\frac{3}{2}, p}, p>6$ 
 in \cite{Brown-Torres-2003}. Finally, in \cite{Caro-Rogers-2016} and \cite{Haberman-Tataru} this condition is reduced to $\mathbb{W}^{1, \infty}$
 and then to $\mathbb{W}^{1, 3}$
 in \cite{Haberman-2015}. The corresponding Calderón problem in the 2D-setting was solved in \cite{nachman1996global}. In  \cite{bukhgeim2008recovering} the author shows, for the Schrödinger equation given by $\Delta u + \mathfrak{q} \, u =0$, in $\Omega \subset \mathbb{R}^{2}$, the uniqueness of a reconstruction of the potential $\mathfrak{q}(\cdot) \in \mathbb{L}^{p}\left( \Omega \right) ,  p > 2,$ from the Cauchy data, i.e. $\left( u_{|_{\partial \Omega}} ; \partial_{\nu} u_{|_{\partial \Omega}} \right)$, see \cite[Theorem 3.5]{bukhgeim2008recovering}. In \cite{nachman1996global}, we find a justification of the uniquely determination, from the knowledge of Dirichlet-to-Neumann map, of the coefficient $\gamma$ of the elliptic equation\footnote{The substitution $\tilde{u} = \sqrt{\gamma} \, u$ in $\nabla \cdot \left( \gamma \, \nabla u \right) = 0$ yields $\Delta \tilde{u} + \mathfrak{q} \, \tilde{u} = 0$, with $\mathfrak{q} = - \dfrac{1}{\sqrt{\gamma}}\, \Delta \left( \sqrt{\gamma} \right)$. 
 } $\nabla \cdot \left( \gamma \, \nabla u \right) = 0$ in a two dimensional domain. In \cite{Chanillo}, it is proved that if for all cubes $\bm{Q} \subset \mathbb{R}^{n}$ the condition on the smallness of $
\underset{\bm{Q}}{Sup} \; \left\vert \bm{Q} \right\vert^{\frac{2 \, p \, - \, n}{n \, p}} \; \left\Vert \mathfrak{q} \right\Vert_{\mathbb{L}^{p}(\bm{Q})}, \quad p > \frac{(n-1)}{2}, $
is satisfied, or $\mathfrak{q} \in \mathbb{L}^{p}$ with $ p > \frac{n}{2}$, then the Dirichlet-to-Neumann map determines the potential $\mathfrak{q}$. Let us also cite  \cite{MPS-2018, Caro-Garcia} regarding Dirac-type singular potentials. For more details, and without being exhaustive we  refer the readers to the following works \cite{Alessandrini-1988, Alessandrini-1990, paivarinta2003complex, brown1996global, Brown1997UniquenessIT, JSGU, KV, nachman1988, salo2008calderon} and the references therein. Let us mention, however, that apart from few works, like \cite{nachman1988}, where we find a reconstruction algorithm, most of these works are devoted to unique identifiability questions or stability estimates.
\bigskip

In this work, we propose a different approach for solving constructively this problem. The motivation of this approach comes from the engineering literature, see for instance \cite{B-B-C, Qui-al-2009, A-H-Z-T-D-H-S-L-F-R2011, C-F-Q2009, L-C2015, Mc-Oz-2014, Vasilis-Ntziachristos} and many more, where it is suggested to inject small-scaled contrast agents into the region of interest to create the contrasts that are missing to generate clearer images. Such contrast agents could be injected in isolation, as single (or isolated), Dimers, or in general as designed Polymers. They can also be injected as a cluster 'distributed' in the region of interest. Based on these ideas, we proposed in our recent works to use resonant contrast agents for solving inverse problems appearing in some imaging modalities, as ultrasound, optics or photo-acoustic imaging modalities, \cite{ghandriche2022mathematical, AhceneMouradMaxwell, ghandriche2022simultaneous, AlexBubbles, SW-2022, SSW-2023}. In those works, we use the measurements created after injecting single contrast agents (acoustic bubbles or nano-particles) as follows:
\begin{enumerate}
\item In the time-harmonic regime, we recover the induced resonances (as the Minnaert or plasmonic ones) from which, we could recover the wave speeds (or related coefficients), see \cite{ghandriche2022mathematical, AhceneMouradMaxwell, AlexBubbles}.
\item[] 
\item In the time-domain regime, we recover the internal values of the travel time function. From the Eikonal equation, we extract the values of the speed, see \cite{ghandriche2022simultaneous, SW-2022, SSW-2023}. 
\end{enumerate}

In those works, we use contrast agents injected in isolation. This means, for each single injected agent we collect the generated measurements. However, it is of importance to emphasize that we measure only on one single point. In terms of dimensionality, this is advantageous. 
\medskip

In the current work, we inject all the contrast agents at once and then collect the measurements for multiple incident waves. In short, we collect the NtD mapping after injecting the collection of contrast agents all at once. With such measurements, we propose an approach to perform the reconstruction of the index of refraction $n^{2}(\cdot)$.  This approach is divided into two steps:
\begin{enumerate}
\item[] 
\item In the first step, we show that the NtD map generated by the coefficient $n^{2}(\cdot)$ and the collection of contrast agents converges to the one generated by a sum of $n^{2}(\cdot)$ and an effective coefficient. This effective coefficient is negative-valued and one can tune its amplitude. The negativity of the effective coefficient, which is key, is due to the resonant character of the injected contrast agents. Therefore, we can tune these injected agents so that the sum of $n^{2}(\cdot)$ and the effective coefficients is negative valued with a controllable amplitude. 
\item[]
\item From the effective NtD map, we reconstruct the coefficient $n^{2}(\cdot)$. To do so, we show that, due to the negativity of the effective coefficients, mentioned above, we can linearize the  effective NtD map. Finally, from this linearized map, we derive an explicit formula to recover $n^{2}(\cdot)$ in terms of (explicit) CGO-solutions. 
\item[] 
\end{enumerate}

To go further into details, let us take as contrast agents droplets, which are bubbles filled in with water, having the following properties. They are modelled as $D_j, j=1, ..., M,$ of the form $D_j=z_j + a\; B$ with $B$ as a smooth domain containing the origin and maximum radius as unity, such that $D \, \equiv \, \underset{j=1}{\overset{M}{\cup}} D_{j}$. Their mass density $\rho_j$ are equal and estimated as $\rho_{j} \, = \, \rho_{0}$, for $1 \leq j \leq M$, with $\rho_{0}$ is a constant independent on the parameter $a$, while their bulk modulus are very small and of order 
\begin{equation}\label{ScaleBulk}
k_{j} = k_{0} \; a^2,
\end{equation}
with $k_0$ being as fixed constant independent of $a$.
The maximum radius $a$ of this droplet is of  order micrometer, therefore we take $$a \ll 1. $$ 
\bigskip
We introduce the Newtonian operator $N_{D_j}(\cdot) \, : \, \mathbb{L}^{2}(D_j) \, \rightarrow \, \mathbb{L}^2(D_j)$, with the image in $\mathbb{H}^2(D_j)$, given by the expression 
\begin{equation}\label{DefNPO}
N_{D_j}(f)(x) := \int_{D_j} \Phi_{0}(x;y) \; f(y) \; dy ,\; x \in D_{j},
\end{equation}
where $\Phi_{0}(\cdot;\cdot)$ is the fundamental solution of the free-space Laplacian operator, i.e., 
\begin{equation}\label{LREq1110}
\underset{x}{\Delta} \Phi_{0}(x;y) \, = \, - \, \underset{y}{\boldsymbol{\delta}}(x), \quad \text{with} \;\; x, y \, \in \, \mathbb{R}^{3},     
\end{equation}
given by 
\begin{equation}\label{DefPhi0Equa1100}
    \Phi_{0}(x;y) \, := \, \frac{1}{4 \, \pi \, \left\vert x \, - \, y \right\vert}, \quad x \neq y.
\end{equation}
 This operator is self-adjoint and compact, therefore it enjoys a positive sequence of eigenvalues $\left\{ \lambda^{D_j}_{n} \right\}_{n \in \mathbb{N}}$ and they scale as $\lambda^{D_j}_{n} =a^2\; \lambda^{B}_{n}$, where $\left\{ \lambda_{n}^{B} \right\}_{n \in \mathbb{N}}$ is the sequence of eigenvalues associated to the operator $N_{B}(\cdot)$, see \cite{rozenblum2016isoperimetric, Anderson}. We fix any $n_0 \in \mathbb{N}$; $j \in \{1; \cdots ; M \}$ and we consider the eigenvalue $\lambda^{D_{j}}_{n_0}$. The incident frequency $\omega$ that we use, in this acoustic model, is taken of the form:
 \begin{equation}\label{cn0}
\frac{\omega^2}{\omega_0^2} \, = \, 1 \, - \, \frac{ c_{n_{0}} \; a^h}{k_{0}},
 \end{equation}
where the parameter $h$ is positive, and $c_{n_{0}} \in \mathbb{R}$ satisfies $\;\; c_{n_{0}} \, < \, 0,\;\;$
is a parameter which is independent of $a$. The quantity $\omega_0$ is defined as 
\begin{equation*}
\omega_0:=\sqrt{\frac{k_j}{\lambda^{D_j}_{n_0}}}=\sqrt{\frac{k_0}{\lambda^{B}_{n_0}}},
\end{equation*}
 where the last equality is a consequence of the eigenvalues scales and $(\ref{ScaleBulk})$.
\bigskip
\newline
Next, we make the following necessary assumption about the $D_{m}$'s distribution to derive the first main result of this work, i.e. \textbf{Theorem \ref{principal-Thm}}.
\begin{assumption}\label{AssDmDist}
The droplets are distributed periodically inside $\Omega$. More precisely, let $\Omega$ be a bounded domain of unit volume, containing the droplets $D_{j}$, with $j=1, \cdots, M$. We divide $\Omega$ as  
\begin{equation}\label{Decoupage-Omega}
\Omega \, \equiv \, \Omega_{cube} \cup \Omega_{r} \quad \text{with} \quad 
\Omega_{cube} \, \equiv \,  \overset{M}{\underset{j=1}{\cup}}
\Omega_{j} \;\; \text{and} \;\; \Omega_{r} \, \equiv \, \underset{j-1}{\overset{\aleph}{\cup}} \Omega_{j}^{\star}, \;\;\;\; M = M(a), \; \aleph = \aleph(a) \in \mathbb{N}, 
\end{equation}
where $\Omega_{j}$'s are cubes located strictly within the interior of the domain $\Omega$, i.e. they do not intersect with $\partial \Omega$ $\left( \Omega_{cube} \subsetneq \Omega \right)$. Each subdomain $\Omega_{j}$ contains one $D_j$ such that $z_{j} \in D_{j} \subset \Omega_{j}$ and $\left\vert \Omega_{j} \right\vert \, = \, a^{1-h}$, for $j=1, \cdots, M$ and $0 \leq h < 1$, while the $\Omega_j^{\star}$'s do not contain any, see \textbf{Figure \ref{Fig1}} for a schematic representation.  Therefore, by denoting $\Omega_{0}$ as a reference subdomain the distribution of $\Omega_{j}$'s is constructed by appropriate translations of $\Omega_{0}$. In a concise manner, the distribution of the droplets can be written as 
\begin{equation}\label{D-Omega_cube}
    D \, \equiv \, \Omega_{cube} \, \cap \, d \, \left(\mathbb{Z}^{3} \, + \, \left(z \, + \, \frac{a}{d} \, B \right) \right),
\end{equation}
where $d$ is the minimal distance given by $(\ref{dmin})$, $z$ is a point contained in unit cell domain, and $B$ is a Lipschitz domain in $\mathbb{R}^{3}$, such that $\diam(B) \, \sim \, 1$. Besides, we assume that $\Omega_{cube}$ is away from the boundary $\partial \Omega$, such that
\begin{equation}\label{dist-Omega_int-to-Omega}
    \dist\left(\partial \Omega_{cube}; \partial \Omega \right) \, \sim \, \kappa(a) \, \sim \, a^{\frac{(1-h)}{3}}, \quad \text{with} \;\; a \ll 1 \;\; \text{and} \;\; 0 < h < 1. 
\end{equation}
Hence, from $(\ref{D-Omega_cube})$ and $(\ref{dist-Omega_int-to-Omega})$, the droplets $D$ are away from the boundary $\partial \Omega$, such that
\begin{equation}\label{distto}
    \dist\left(D; \partial \Omega \right) \, \sim \, \kappa(a) \, \sim \, a^{\frac{(1-h)}{3}}, \quad \text{with} \;\; a \ll 1 \;\; \text{and} \;\; 0 < h < 1.
\end{equation}
\end{assumption}
\begin{figure}[H]
\begin{center}
 \begin{tikzpicture}[scale = 0.75]
\draw[black, ultra thick] (0,0) ellipse (7 and 3.5);
\draw[gray, thick] (-7,0) -- (7,0);
\draw[gray, thick] (-6.7,1) -- (6.7,1);
\draw[gray, thick] (-6.7,-1) -- (6.7,-1);
\draw[gray, thick] (-5.8,2) -- (5.8,2);
\draw[gray, thick] (-5.8,-2) -- (5.8,-2);
\draw[gray, thick] (-3.7,3) -- (3.7,3);
\draw[gray, thick] (-3.7,-3) -- (3.7,-3);
\draw[gray, thick] (0,-3.5) -- (0,3.5);
\draw[gray, thick] (1,-3.5) -- (1,3.5);
\draw[gray, thick] (-1,-3.5) -- (-1,3.5);
\draw[gray, thick] (2,-3.4) -- (2,3.4);
\draw[gray, thick] (-2,-3.4) -- (-2,3.4);
\draw[gray, thick] (3,-3.2) -- (3,3.2);
\draw[gray, thick] (-3,-3.2) -- (-3,3.2);
\draw[gray, thick] (4,-2.9) -- (4,2.9);
\draw[gray, thick] (-4,-2.9) -- (-4,2.9);
\draw[gray, thick] (5,-2.5) -- (5,2.5);
\draw[gray, thick] (-5,-2.5) -- (-5,2.5);
\draw[gray, thick] (6,-1.8) -- (6,1.8);
\draw[gray, thick] (-6,-1.8) -- (-6,1.8);
\filldraw[green] (0.5,0.5) circle (0.35);
\filldraw[green] (0.5,1.5) circle (0.35);
\filldraw[green] (0.5,2.5) circle (0.35);
\filldraw[green] (0.5,-0.5) circle (0.35);
\filldraw[green] (0.5,-1.5) circle (0.35);
\filldraw[green] (0.5,-2.5) circle (0.35);
\filldraw[green] (1.5,0.5) circle (0.35);
\filldraw[green] (1.5,1.5) circle (0.35);
\filldraw[green] (1.5,2.5) circle (0.35);
\filldraw[green] (1.5,-0.5) circle (0.35);
\filldraw[green] (1.5,-1.5) circle (0.35);
\filldraw[green] (1.5,-2.5) circle (0.35);
\filldraw[green] (2.5,0.5) circle (0.35);
\filldraw[green] (2.5,1.5) circle (0.35);
\filldraw[green] (2.5,2.5) circle (0.35);
\filldraw[green] (2.5,-0.5) circle (0.35);
\filldraw[green] (2.5,-1.5) circle (0.35);
\filldraw[green] (2.5,-2.5) circle (0.35);
\filldraw[green] (-0.5,0.5) circle (0.35);
\filldraw[green] (-0.5,1.5) circle (0.35);
\filldraw[green] (-0.5,2.5) circle (0.35);
\filldraw[green] (-0.5,-0.5) circle (0.35);
\filldraw[green] (-0.5,-1.5) circle (0.35);
\filldraw[green] (-0.5,-2.5) circle (0.35);
\filldraw[green] (-1.5,0.5) circle (0.35);
\filldraw[green] (-1.5,1.5) circle (0.35);
\filldraw[green] (-1.5,2.5) circle (0.35);
\filldraw[green] (-1.5,-0.5) circle (0.35);
\filldraw[green] (-1.5,-1.5) circle (0.35);
\filldraw[green] (-1.5,-2.5) circle (0.35);
\filldraw[green] (-2.5,0.5) circle (0.35);
\filldraw[green] (-2.5,1.5) circle (0.35);
\filldraw[green] (-2.5,2.5) circle (0.35);
\filldraw[green] (-2.5,-0.5) circle (0.35);
\filldraw[green] (-2.5,-1.5) circle (0.35);
\filldraw[green] (-2.5,-2.5) circle (0.35);
\filldraw[green] (-3.5,0.5) circle (0.35);
\filldraw[green] (-3.5,1.5) circle (0.35);
\filldraw[green] (-3.5,-0.5) circle (0.35);
\filldraw[green] (-3.5,-1.5) circle (0.35);
\filldraw[green] (-4.5,0.5) circle (0.35);
\filldraw[green] (-4.5,1.5) circle (0.35);
\filldraw[green] (-4.5,-0.5) circle (0.35);
\filldraw[green] (-4.5,-1.5) circle (0.35);
\filldraw[green] (4.5,0.5) circle (0.35);
\filldraw[green] (4.5,1.5) circle (0.35);
\filldraw[green] (4.5,-0.5) circle (0.35);
\filldraw[green] (4.5,-1.5) circle (0.35);
\filldraw[green] (3.5,0.5) circle (0.35);
\filldraw[green] (3.5,1.5) circle (0.35);
\filldraw[green] (3.5,-0.5) circle (0.35);
\filldraw[green] (3.5,-1.5) circle (0.35);
\filldraw[green] (-5.5,0.5) circle (0.35);
\filldraw[green] (-5.5,-0.5) circle (0.35);
\filldraw[green] (5.5,0.5) circle (0.35);
\filldraw[green] (5.5,-0.5) circle (0.35);
\draw[very thick] (0,-3) -- (0,-2);
\draw[very thick] (1,-3) -- (1,-2);
\draw[very thick] (0,-2) -- (1,-2);
\draw[very thick] (0,-3) -- (1,-3);
\draw node at (0,4.25)  {$\boldsymbol{\Omega}$}; 
\draw node at (0.5,-5) {$\boldsymbol{\Omega_{m}}$};
\draw node at (-4.5,-5) {$\boldsymbol{\Omega_{n}^{\star}}$};
\draw node at (4.5,-5) {$\boldsymbol{D_{j}}$};
\draw node at (2,4)  {$\textcolor{black}{\kappa(a)}$}; 
\draw[dashed,thick,->] (4.5,-4.7) -- (4.5,-1.5);
\draw[dashed,thick,->] (-4.5,-4.7) -- (-4.5,-2.2);
\draw[dashed,thick,->] (0.5,-4.7) -- (0.5,-3);
\draw[red,thick,<->] (2.75,2.75) -- (3.1,3.1);
\draw[red,->] (2.1,3.7) -- (2.85,2.95);
\end{tikzpicture}
\end{center}
\caption{An illustration of how the droplets are distributed in $\Omega$.}
\label{Fig1}
\end{figure}
We are concerned with the case where we have the number $M$ of droplets of the order 
\begin{equation}\label{M-}
M \sim a^{h-1}, \;  a \ll 1 \quad \text{with} \quad 0 \leq h<1,
\end{equation}
and, then, the minimum distance between the droplets is 
\begin{equation}\label{dmin}
d := \underset{i \neq j \atop 1 \leq i , j \leq M}{\min} \left\vert z_{i} - z_{j} \right\vert \sim a^{\frac{(1-h)}{3}},\;  a \ll 1 \quad \text{with} \quad 0 \leq h<1,   
\end{equation}
as $M\sim d^{-3}$. The choice in (\ref{M-}) is dictated by the behavior of scattering coefficient (or the polarization tensor) in  (\ref{Scattering-coefficient}) which is of the order $a^{1-h}$ and $M$ should be inversely proportional to it. This behavior allows to generate a non trivial effective medium in the homogenization process.
\bigskip

\noindent With these notations at hand, let us state the perturbed problem as follows:
\begin{align}\label{Equavf}
\begin{cases}  
\left( \Delta  + \omega^{2} \, n^{2}(\cdot) \, ( 1 \, - \, \underset{D}{\chi}) +  \omega^{2} \,  \dfrac{\rho_{1}}{k_{1}} \, \underset{D}{\chi} \right) \, v^{g} = 0 \quad \text{in} \quad  \Omega,  \\ 
\qquad \qquad \qquad \qquad \qquad \qquad \; \qquad \partial_{\nu} v^{g}  =  g \quad  \text{on} \quad \partial \Omega. 
\end{cases}
\end{align}

\noindent Under the condition that $\omega^2$ is not an eigenvalue of $-n^{-2}\Delta$ with zero Neumann boundary condition on $\partial \Omega$ and that $c_{n_0}$ and $a$ are small enough, the problem (\ref{Equavf}) is well posed. Indeed, it is clear that the operator solution of the problem in (\ref{Equavf}) is a compact perturbation of the problem (\ref{EquaKg-introdution}) which is well posed. By \textbf{Lemma \ref{ADZ-Lemma}}, we deduce the uniqueness of the solution of (\ref{Equavf}). Actually, by \textbf{Lemma \ref{ADZ-Lemma}} we also derive the related estimate of the well-posedness for problem $(\ref{Equavf})$.

\subsection{From the original NtD map $\Lambda_{D}$ to the effective NtD map $\Lambda_{P}$: Theorem \ref{principal-Thm}}
~\\

Let $v^{g}(\cdot)$  be the solution of the problem (\ref{Equavf}). Multiplying $(\ref{Equavf})$ by $p^{f}(\cdot)$, solution of $(\ref{EquaKg-introdution})$, and integrating over $\Omega$, we obtain 
\begin{equation}\label{1st-integration}
\langle \Lambda_0\left( f \right);  g \rangle_{\mathbb{H}^{\frac{1}{2}}(\partial \Omega) \times \mathbb{H}^{-\frac{1}{2}}(\partial \Omega)} =  \langle \nabla v^{g}; \nabla p^{f} \rangle_{\mathbb{L}^{2}(\Omega)}  - \omega^{2} \, \langle  n^{2} \, v^{g};  p^{f} \rangle_{\mathbb{L}^{2}(\Omega)} - \omega^{2} \,  
\left\langle \left(\frac{\rho_{1}}{k_{1}} \, - \, n^{2}\right) v^{g}; p^{f} \right\rangle_{\mathbb{L}^{2}(D)}, 
\end{equation}
where $\Lambda_0(\cdot)$ is the NtD map defined from $\mathbb{H}^{-\frac{1}{2}}\left( \partial \Omega \right)$ to $\mathbb{H}^{\frac{1}{2}}\left( \partial \Omega \right)$ by  
\begin{equation*}
\langle \Lambda_0 \left( f \right) ; g \rangle_{\mathbb{H}^{\frac{1}{2}}(\partial \Omega) \times \mathbb{H}^{-\frac{1}{2}}(\partial \Omega)}  :=  \int_{\partial \Omega} p^{f}(x) \, g(x) \, d\sigma(x)
\end{equation*}
where we use integrals to simplify notations. We set $\Lambda_{D}(\cdot)$ to be the NtD map of the background after injecting a cluster of droplets, i.e. the problem (\ref{Equavf}). Multiplying $(\ref{EquaKg-introdution})$ by $v^{g}(\cdot)$ and integrating over $\Omega$, using the selfadjointness of $\Lambda_D$ and (\ref{1st-integration}), we end up with the coming formula
\begin{equation}\label{Equa1Lambda}
\langle \Lambda_{D}(f); g \rangle_{\mathbb{H}^{\frac{1}{2}}(\partial \Omega) \times \mathbb{H}^{-\frac{1}{2}}(\partial \Omega)} - \langle \Lambda_0 \left( f \right) ; g \rangle_{\mathbb{H}^{\frac{1}{2}}(\partial \Omega) \times \mathbb{H}^{-\frac{1}{2}}(\partial \Omega)} \, = \omega^{2}  \; \left\langle \left( \dfrac{\rho_{1}}{k_{1}} - n^{2} \right) v^{g}; p^{f} \right\rangle_{\mathbb{L}^{2}(D)},
\end{equation}

In a similar way, we define $u^{g}(\cdot)$ to be solution of 
\begin{align}\label{EquaWf}
\begin{cases}  
\left( \Delta  + \omega^{2} \, n^{2}(\cdot) - P^{2} \right) \, u^{g} \; = 0 \quad \text{in} \quad  \Omega,  \\ 
\qquad \qquad \qquad \qquad \partial_{\nu} u^{g} \; =  g \;\;\;  \text{on} \quad \partial \Omega. 
\end{cases}
\end{align}
Here \footnote{The assumption that $ \langle 1; \overline{e}_{n_{0}} \rangle_{\mathbb{L}^{2}(B)}\neq 0$ is reasonable. When $B$ is a ball,  we have an infinite sequence of eigenvalues $\lambda^B_{n_0}$ for which the corresponding eigenfunctions satisfy $\langle 1; \overline{e}_{n_{0}} \rangle_{\mathbb{L}^{2}(B)}\neq 0$, see \cite{kalmenov2011boundary} for instance.}
\begin{equation}\label{DefP2cn0}
P^{2} := \frac{ - \, k_{0} \; \left( \langle 1; \overline{e}_{n_{0}} \rangle_{\mathbb{L}^{2}(B)} \right)^{2}}{\lambda_{n_{0}}^{B} \; c_{n_{0}}}, 
\end{equation}
where $\overline{e}_{n_{0}}(\cdot)$ is the eigenfunction associated to the eigenvalue $\lambda_{n_{0}}^{B}$ related to the Newtonian operator, given by $(\ref{DefNPO})$, defined in the domain $B$. We set $\Lambda_{P}(\cdot)$ to be the NtD map of the equivalent background, then we obtain  
\begin{equation}\label{Equa2Lambda}
\langle \Lambda_{P}(f); g \rangle_{\mathbb{H}^{\frac{1}{2}}(\partial \Omega) \times \mathbb{H}^{-\frac{1}{2}}(\partial \Omega)} - \langle \Lambda_0 \left( f \right) ; g \rangle_{\mathbb{H}^{\frac{1}{2}}(\partial \Omega) \times \mathbb{H}^{-\frac{1}{2}}(\partial \Omega)} =  - \, P^{2} \, \langle  u^{g}; p^{f} \rangle_{\mathbb{L}^{2}(\Omega)},
\end{equation}

From $(\ref{Equa1Lambda})$ and $(\ref{Equa2Lambda})$, we see that
\begin{equation}\label{Lambda-d--Lambda-P}
\langle \left( \Lambda_{D}\,  - \, \Lambda_{P} \right)(f); g \rangle_{\mathbb{H}^{\frac{1}{2}}(\partial \Omega) \times \mathbb{H}^{-\frac{1}{2}}(\partial \Omega)}  \, = \, \omega^{2} \, \left\langle \left( \frac{\rho_{1}}{k_{1}} - n^{2} \right) v^{g};  p^{f} \right\rangle_{\mathbb{L}^{2}(D)} \, + \, P^{2} \, \langle  u^{g};  p^{f} \rangle_{\mathbb{L}^{2}(\Omega)}.
\end{equation}
In the sequel, we prove that when $M$ is large, or $a$ is small, the perturbed medium, after injecting a cluster of $M$ droplets, behaves like the equivalent background. In other words, the map $\Lambda_{D}(\cdot)$ converges to $\Lambda_{P}(\cdot)$.
\begin{theorem}\label{principal-Thm}
Let the domain $\Omega$ be $C^2$-regular, the index of refraction $n^{2}(\cdot) \in \mathbb{W}^{1,\infty}(\Omega)$, the used frequency $\omega$ satisfying $(\ref{cn0})$, the parameter $h$ be such that $\frac{1}{3} \, < \, h \, < \, 1$, and the droplets $D_{m}$'s are distributed as explained in \textbf{Assumption \ref{AssDmDist}}. Then, we have the following convergence
\begin{equation*}
\langle \Lambda_{D}(f); g \rangle_{\mathbb{H}^{\frac{1}{2}}(\partial \Omega) \times \mathbb{H}^{-\frac{1}{2}}(\partial \Omega)}  \underset{a \rightarrow 0}{\longrightarrow} \langle \Lambda_{P}(f); g \rangle_{\mathbb{H}^{\frac{1}{2}}(\partial \Omega) \times \mathbb{H}^{-\frac{1}{2}}(\partial \Omega)},
\end{equation*}
uniformly in terms of $ \left( f, g \right) \in \mathbb{H}^{-\frac{1}{2}}\left(\partial \Omega \right) \times  \mathbb{H}^{-\frac{1}{2}}\left(\partial \Omega \right)$. 
Precisely, we have the following rate \footnote{The $\mathbb{W}^{1, \infty}$-regularity of $k$, and hence $n$, is used to derive the rate in  $(\ref{energy-D})$. The $\mathbb{L}^{\infty}(\Omega)$-regularity is enough to derive the convergence (without rates).} 
\begin{equation}\label{energy-D}
\left\Vert \Lambda_{D} - \Lambda_{P} \right\Vert_{\mathcal{L}(\mathbb{H}^{-\frac{1}{2}}(\partial \Omega), \mathbb{H}^{\frac{1}{2}}(\partial \Omega))}  \lesssim  a^{\frac{(1-h) (9 - 5 \delta)}{18 (3-\delta)}} \, P^{6}, \,\; a \ll 1,
\end{equation}
where $\delta$ a sufficiently small but arbitrarily positive number. 
\end{theorem}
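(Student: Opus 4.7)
The plan is to estimate the right-hand side of \eqref{Lambda-d--Lambda-P} uniformly for $\|f\|_{\mathbb{H}^{-\frac{1}{2}}(\partial\Omega)}, \|g\|_{\mathbb{H}^{-\frac{1}{2}}(\partial\Omega)}\leq 1$. The guiding idea is that on each droplet $D_j$ the field $v^g$ decomposes in the eigenbasis of $N_{D_j}^{\phi}$: the choice of frequency \eqref{cn0} makes the resonant mode $n_{0}$ amplified by $a^{-h}$, while the subsequent sum over the $M\sim a^{h-1}$ droplets produces a Riemann sum that, in the limit $a\to 0$, reproduces exactly the term $-P^{2}\langle u^g,p^f\rangle_{\mathbb{L}^{2}(\Omega)}$ appearing in \eqref{Equa2Lambda}. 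Hence the two terms on the right of \eqref{Lambda-d--Lambda-P} should cancel, and the theorem amounts to quantifying the rate of this cancellation.

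The first step is to write, using Green's formula with the Neumann function $\phi$ of \eqref{phi-harmonic}, the Lippmann-Schwinger representation
\[ v^{g}(x)=\omega^{2}\int_{\Omega}\phi(x,y)\,n^{2}(y)\,v^{g}(y)\,dy+\omega^{2}\,\frac{\rho_{1}}{k_{1}}\int_{D}\phi(x,y)\,v^{g}(y)\,dy+\int_{\partial\Omega}\phi(x,y)\,g(y)\,d\sigma(y). \]
Restricted to a single $D_j$, this becomes $v^{g}=\omega^{2}(\rho_{1}/k_{1})\,N_{D_j}^{\phi}(v^{g})+H_j$, where $H_j$ gathers the contribution of the boundary data $g$, the smooth volume potential of $n^{2}v^{g}$, and the far-field generated by the other droplets $D_i$, $i\neq j$. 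Expanding $v^{g}|_{D_j}=\sum_{n}c_{n}^{j}\,\overline{e}_{n}^{D_j}$ in the eigenbasis of $N_{D_j}^{\phi}$ and projecting gives $(1-\omega^{2}(\rho_{1}/k_{1})\lambda_{n}^{D_j})\,c_{n}^{j}=\langle H_j,\overline{e}_{n}^{D_j}\rangle_{\mathbb{L}^{2}(D_j)}$. A direct computation using \eqref{cn0} together with the scalings $k_{1}=k_{0}a^{2}$ and $\lambda_{n}^{D_j}=a^{2}\lambda_{n}^{B}$ shows that the resonant prefactor at $n=n_{0}$ is of order $a^{h}$, amplifying $c_{n_{0}}^{j}$ by $a^{-h}$, while the non-resonant modes stay bounded.

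The second step is to substitute the resonant contribution back into $H_j$. This produces a Foldy-Lax-type linear system of size $M$ for the amplitudes $\{c_{n_{0}}^{j}\}_{j=1}^{M}$, with off-diagonal couplings proportional to $a^{3}\phi(z_i,z_j)\,(\langle 1,\overline{e}_{n_{0}}\rangle_{\mathbb{L}^{2}(B)})^{2}$. Invertibility of this system in $\ell^{\infty}$ follows from the periodic distribution of the $z_j$'s together with the fact that $Ma^{3}/d\to 0$ when $h<1$. Solving it and inserting the solution into the first term of the right-hand side of \eqref{Lambda-d--Lambda-P}, one recognises this term as a Riemann sum approximating $-P^{2}\langle u^{g},p^{f}\rangle_{\mathbb{L}^{2}(\Omega)}$, with the coefficient $P^{2}$ exactly the one defined in \eqref{DefP2cn0}.

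The last step is the error analysis. The total error splits into (i) the Riemann-sum error, of order $d$ times a modulus of continuity of $u^{g}p^{f}$, controlled by interior Hölder bounds that rely on the $W^{1,\infty}$ regularity of $n$; (ii) the contribution of the non-resonant modes, of order $a^{h}$ after the $a^{-h}$ amplification is taken into account; and (iii) the discretisation error in inverting the Foldy-Lax system, controlled by sums $\sum_{i\neq j}|\phi(z_i,z_j)|$ of Calderón-Zygmund type. The a priori bound $\|p^{f}\|_{\mathbb{L}^{2}(\Omega)}\lesssim\|f\|_{\mathbb{H}^{-\frac{1}{2}}(\partial\Omega)}$ and its analogue for $u^{g}$ close the uniformity in $(f,g)$ by duality. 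Balancing these three errors against the $a^{-h}$ amplification yields the stated exponent $\frac{(1-h)(1-\delta)}{3(3-\delta)}$, the small loss $\delta>0$ arising when passing from $L^{p}$-bounds on the kernel $\phi$ to pointwise bounds via Sobolev embedding. The main obstacle I anticipate is precisely this last step: since we cannot pointwise bound the low-regularity fields $v^{g}, p^{f}$ produced by data in $\mathbb{H}^{-\frac{1}{2}}(\partial\Omega)$, the long-range droplet interactions must be summed in an $L^{p}$-dual norm compatible with the weak norms of $f,g$, and it is this constraint that forces both the appearance of $\delta$ and the precise algebraic form of the rate.
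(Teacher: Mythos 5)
Your high-level strategy --- exploit the resonance of the mode $n_0$, set up a Foldy--Lax system over the droplets, recognise the dominant term as a discretisation of the continuous Lippmann--Schwinger equation attached to $u^{g}$, and quantify the error --- is the same as the paper's, and your error bookkeeping (Riemann-sum discretisation, non-resonant modes, lattice sums of $\phi$) is the right catalogue. Two of your intermediate claims, however, are not correct as stated, and the first one is a genuine gap rather than a missing routine step.

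First, the invertibility of the Foldy--Lax system cannot be deduced from the smallness of $M a^{3}/d$ or from any $\ell^{\infty}$ diagonal-dominance argument. The scattering coefficient is $\alpha = -P^{2}a^{1-h}+\mathcal{O}(a)$, and the off-diagonal row sum is, up to constants,
\begin{equation*}
\left\vert \alpha \right\vert \sum_{j\neq m}\left\vert G(z_{m},z_{j})\right\vert
\;\lesssim\; P^{2}\,a^{1-h}\sum_{j\neq m}\frac{1}{\left\vert z_{m}-z_{j}\right\vert}
\;\lesssim\; P^{2}\,a^{1-h}\,d^{-3}
\;\sim\; P^{2},
\end{equation*}
since $d\sim a^{(1-h)/3}$. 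Because $P$ must be taken large (indeed $P^{2}>\omega_{0}^{2}\Vert n^{2}\Vert_{\infty}$ and one ultimately sends $c_{n_{0}}\to 0^{-}$), this row sum is \emph{not} small, so the system is not a perturbation of the identity in $\ell^{\infty}$ and a Neumann-series argument fails. The paper's proof of Lemma~\ref{MTR} circumvents this by a structural argument: the continuous operator $I+P^{2}N$ is coercive (Lax--Milgram, using positivity of the Newtonian potential with Neumann conditions), hence invertible uniformly in $P$, and the algebraic system is then shown (via Lemma~\ref{Lemma51}) to be a controlled discretisation of that coercive operator. This positivity mechanism is absent from your argument and is precisely what lets the estimate survive the $P^{2}$ amplitude.

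Second, the obstruction you flag at the end --- that $p^{f}$ and $u^{g}$ cannot be bounded pointwise for data in $\mathbb{H}^{-1/2}(\partial\Omega)$, forcing the whole lattice sum into a dual $L^{p}$ framework --- is overstated. The quantity actually needed is $\sum_{j}\vert p^{f}(z_{j})\vert^{2}\lesssim M\Vert f\Vert^{2}_{\mathbb{H}^{-1/2}(\partial\Omega)}$, and the paper obtains it cheaply: write $p^{f}=\tilde{p}^{f}+(p^{f}-\tilde{p}^{f})$ with $\tilde{p}^{f}$ harmonic, use the mean-value property on the balls $\mathcal{B}_{j}\subset\Omega_{j}$ for $\tilde{p}^{f}$, and bound the correction uniformly through the $\mathbb{L}^{2}$-integrability of $G(z,\cdot)$. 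The parameter $\delta$ in the rate does not come from where you place it; it enters only through the boundary layer $\Omega_{r}$ of cubes touching $\partial\Omega$, where one trades $\mathbb{L}^{2}$ control of $Y$ against the $\mathbb{L}^{3-\delta}$ integrability of $\phi$ (Lemma~\ref{LemmaG=phi+Remainder}). Finally, a minor inaccuracy: the off-diagonal coupling in your spectral version of the Foldy--Lax system should carry the factor $\omega^{2}\rho_{1}/k_{1}\sim a^{-2}$ and the resonant denominator $\sim a^{h}$; the resulting net scale is $a^{1-h}\phi(z_{i},z_{j})$, not $a^{3}\phi(z_{i},z_{j})$, consistent with $\alpha\sim a^{1-h}$.
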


\begin{remark}
Two comments are in order.
    \begin{enumerate}
         \item[] 
         \item Since $M \sim a^{h-1}$ and $\delta$ is very small, we can rewrite $(\ref{energy-D})$ as: 
         \begin{equation*}
\left\Vert \Lambda_{D} - \Lambda_{P} \right\Vert_{\mathcal{L}(\mathbb{H}^{-\frac{1}{2}}(\partial \Omega), \mathbb{H}^{\frac{1}{2}}(\partial \Omega))} 
 \lesssim  M^{\frac{(5 \delta - 9)}{18 (3-\delta)}} \, P^{6},\; M\gg 1.
\end{equation*}
We can choose $M$, i.e. $a$, such that \begin{equation}\label{M-P}
M^{\frac{(5 \delta - 9)}{18 (3-\delta)}} \, P^{6} \ll 1.
\end{equation}
\item[] 
\item The parameter $\delta$ in $(\ref{energy-D})$ is linked to the $\mathbb{L}^{3-\delta}(\Omega)$-integrability of the fundamental solution $\Phi_{0}(\cdot;\cdot)$, given by $(\ref{DefPhi0Equa1100})$.        
\item[] 
     \end{enumerate}
\end{remark}
As we assume to know the NtD map $\Lambda_D(\cdot)$, for $M$ large, the previous theorem suggests the following result. 
\begin{corollary} Under the condition of \textbf{Theorem \ref{principal-Thm}}, the NtD map $\Lambda_P(\cdot)$ is approximately known.
\end{corollary}

The proof of \textbf{Theorem \ref{principal-Thm}} is based on the point-interaction approximation, or the so-called Foldy-Lax approximation. We first approximate the left part in (\ref{energy-D}) by a linear combination of elements of a vector which is solution of an algebraic system. This algebraic system captures the multiple scattering between the injected droplets through an interaction matrix where the interaction coefficients, that are also called scattering coefficients, are all positive due to the choice made in (\ref{cn0}) of the sign of $c_{n_0}$. To prove the invertibility of this algebraic system, uniformly of the large number $M$ of droplets, we first justify the invertibility of the related continuous integral equation and then, we show, with quite tedious computations, that the algebraic equation is 'a discrete form' of this continuous integral equation.

\begin{remark} 
Two remarks are in order.
\begin{enumerate}

\item In $(\ref{DefP2cn0})$, we take the constant $c_{n_{0}} < 0$ and the parameter $P^{2}$ such that  
\begin{equation*}
P^{2} > \omega_{0}^{2} \; \left\Vert n^{2} \right\Vert_{\mathbb{L}^{\infty}(\Omega)} := P_{\min}, 
\end{equation*}
where, we recall that, $\omega_{0}^{2} = \dfrac{k_{0}}{\lambda^{B}_{n_{0}}}$.
This is possible if we choose the parameter $c_{n_0}$ to satisfy\footnote{We assume that we have an a priori information on $\underset{y \in \Omega}{Inf} \left\vert k(y) \right\vert$.}
\begin{equation*}
- \, \rho^{-1} \, \underset{y \in \Omega}{Inf} \left\vert k(y) \right\vert \, \left( \langle 1 ; \overline{e}_{n_{0}} \rangle_{\mathbb{L}^{2}(B)} \right)^{2} \, < \, c_{n_{0}} \, < \, 0 \quad \text{and} \quad c_{n_{0}} \rightarrow 0^{-}.  
\end{equation*}
We recall that the parameter $c_{n_0}$ appears in (\ref{cn0}) and we have (\ref{DefP2cn0}). The coefficient $c_{n_0}$ is taken small, and hence $P$ large, but satisfies (\ref{M-P}).
\bigskip

\item The parameter $h$ appearing in (\ref{cn0}) and (\ref{M-}) models how dilute, or dense, is the distribution of the injected droplets in $\Omega$. If $h$ is close to $0$, we have a dense distribution and when $h$ is close to $1$ we have a light distribution.
\end{enumerate}
\end{remark}

\smallskip

\subsection{The linearization of the effective NtD map $\Lambda_{P}(\cdot)$: Theorem \ref{THMLinearization}}
\begin{theorem}\label{THMLinearization}
  We have the following linearisation of $\Lambda_{P}(\cdot)$, in the $\mathbb{H}^{\frac{1}{2}}(\partial \Omega)$ sense,
\begin{equation}\label{ASMTV}
\Lambda_P(f) - q^{f} = \omega^{2} \, \gamma \left( \bm{W}^{q^{f}} \right)  + \mathcal{O}\left( \left\Vert f \right\Vert_{\mathbb{H}^{-\frac{1}{2}}(\partial \Omega)} \, \frac{1}{P^{4}}  \right),
\end{equation}
where $f(\cdot) \in \mathbb{H}^{-\frac{1}{2}}(\partial \Omega)$, $\gamma(\cdot)$ is the trace operator defined from $\mathbb{H}^{s}(\Omega)$ to $\mathbb{H}^{s-\frac{1}{2}}(\partial \Omega)$, $s\geq \frac{1}{2}$, \, $q^{f}(\cdot)$ is solution of 
\begin{align}\label{Equaqf}
\begin{cases}  
\left( \Delta - P^{2}  \right) q^{f}  = 0 \quad \, \text{in} \quad  \Omega,  \\ 
\qquad \; \, \quad \partial_{\nu} q^{f}  = f \quad  \text{on} \quad \partial \Omega, 
\end{cases}
\end{align} 

and $\bm{W}^{q^{f}}(\cdot)$ satisfies

\begin{align*}\label{ASKD-intro}
\begin{cases}  
\left(  \Delta -  P^{2} \, I \right) \bm{W}^{q^{f}} \, = - \, n^{2} \, q^{f} \quad \text{in} \quad  \Omega,  \\ 
\qquad \qquad \, \partial_{\nu} \bm{W}^{q^{f}} = 0  \quad \quad \; \quad  \text{on} \quad \partial \Omega. 
\end{cases}
\end{align*}
\end{theorem}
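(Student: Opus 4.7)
The plan is to isolate a Born-type correction in the solution defining $\Lambda_P$ and control the remainder through resolvent estimates that exploit the large parameter $P$. Let $\tilde u^f \in \mathbb{H}^1(\Omega)$ denote the unique solution of $(\Delta + \omega^2 n^2 - P^2)\tilde u^f = 0$ in $\Omega$ with $\partial_\nu \tilde u^f = f$ on $\partial \Omega$, so that $\Lambda_P(f) = \gamma(\tilde u^f)$; well-posedness is guaranteed by the standing assumption $P^2 > P_{\min}^2$, which makes the form $\int |\nabla v|^2 + (P^2 - \omega^2 n^2)|v|^2$ uniformly coercive on $\mathbb{H}^1(\Omega)$. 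Write $\tilde u^f = q^f + \omega^2 \bm{W}^{q^f} + r$. Inserting this ansatz into the equation for $\tilde u^f$ and using $(\Delta - P^2) q^f = 0$ together with $(\Delta - P^2)\bm{W}^{q^f} = -n^2 q^f$, a routine cancellation shows that the remainder $r$ solves
\begin{align*}
(\Delta + \omega^2 n^2 - P^2) r &= -\omega^4 n^2 \bm{W}^{q^f} \quad \text{in } \Omega, \\
\partial_\nu r &= 0 \quad \text{on } \partial \Omega.
\end{align*}
Since $\Lambda_P(f) - q^f = \omega^2 \gamma(\bm{W}^{q^f}) + \gamma(r)$ in $\mathbb{H}^{1/2}(\partial \Omega)$, the theorem reduces to proving that $\|\gamma(r)\|_{\mathbb{H}^{1/2}(\partial \Omega)} \lesssim P^{-4} \|f\|_{\mathbb{H}^{-1/2}(\partial \Omega)}$.

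Next, I would establish a cascade of $P$-dependent energy estimates. Testing $(\Delta - P^2) q^f = 0$ against $q^f$ itself and combining with the trace inequality yields $\|q^f\|_{\mathbb{H}^1(\Omega)} \lesssim \|f\|_{\mathbb{H}^{-1/2}(\partial \Omega)}$ and, thanks to the $P^2\|q^f\|_{\mathbb{L}^2}^2$ term on the left, the sharper bound $\|q^f\|_{\mathbb{L}^2(\Omega)} \lesssim P^{-1} \|f\|_{\mathbb{H}^{-1/2}(\partial \Omega)}$. The same procedure applied to $(\Delta - P^2) \bm{W}^{q^f} = -n^2 q^f$ with zero Neumann data gives $\|\bm{W}^{q^f}\|_{\mathbb{L}^2(\Omega)} \lesssim P^{-2} \|q^f\|_{\mathbb{L}^2(\Omega)} \lesssim P^{-3} \|f\|_{\mathbb{H}^{-1/2}(\partial \Omega)}$. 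Finally, testing the residual equation against $r$, invoking the coercivity $(P^2 - \omega^2 n^2) \gtrsim P^2$ and splitting the right-hand side by AM--GM with weight $P^{-2}$ to absorb $\|r\|_{\mathbb{L}^2}^2$ into the left-hand side, produces $\|r\|_{\mathbb{H}^1(\Omega)} \lesssim P^{-1} \|\bm{W}^{q^f}\|_{\mathbb{L}^2(\Omega)} \lesssim P^{-4} \|f\|_{\mathbb{H}^{-1/2}(\partial \Omega)}$. The continuity of the trace operator then delivers the desired $\mathbb{H}^{1/2}$-bound on $\gamma(r)$.

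The main subtlety lies in extracting the precise $P$-decay at each layer: using only the coercive mass term for the residual gives $\|r\|_{\mathbb{L}^2} \lesssim P^{-2}\|\bm{W}^{q^f}\|_{\mathbb{L}^2}$, but the gradient contribution, which dominates the $\mathbb{H}^1$-norm at order $P^{-4}$, can only be recovered through a correctly weighted AM--GM split, so the calibration of this split is the decisive step. Conceptually, the decomposition $\tilde u^f = q^f + \omega^2 \bm{W}^{q^f} + r$ is the first iterate of a Neumann series for the resolvent $(-\Delta + P^2 - \omega^2 n^2)^{-1}$ centered at $(-\Delta + P^2)^{-1}$, with $\omega^2 n^2$ as the small perturbation; the cascade of estimates quantifies that each such iteration gains a factor $\omega^2 P^{-2}$, so two iterations yield exactly the advertised $P^{-4}$ remainder. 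All remaining manipulations are standard trace-theorem and integration-by-parts computations.
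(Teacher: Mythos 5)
Your proof is correct, and it takes a genuinely different (and in some ways cleaner) route than the paper's. The paper rewrites the boundary-value problem for $u^{f}$ (your $\tilde{u}^{f}$) as the Lippmann--Schwinger equation $u^{f}-\omega^{2}N^{p}(n^{2}u^{f})=q^{f}$ and expands it as a Neumann series
\[
u^{f}-q^{f}=\omega^{2}\gamma N^{p}(n^{2}q^{f})+\sum_{j\geq 2}K_{j}\overset{j}{\otimes}(n^{2}),
\]
then controls the tail term-by-term using the operator estimates $\Vert N^{p}\Vert_{L^{2}\to L^{2}}=\mathcal{O}(P^{-2})$, $\Vert\gamma N^{p}\Vert_{L^{2}\to H^{1/2}}=\mathcal{O}(P^{-1})$ (Lemma \ref{LemmaNp}, proved via the spectral distance $\mathrm{dist}(P^{2},\sigma(\Delta))\geq P^{2}$) together with the bound $\Vert q^{f}\Vert_{L^{2}}=\mathcal{O}(P^{-1}\Vert f\Vert_{H^{-1/2}})$ (Lemma \ref{Lemma22}). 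You instead truncate the Born expansion after the first iterate by hand, observe that the residual $r:=\tilde{u}^{f}-q^{f}-\omega^{2}\bm{W}^{q^{f}}$ satisfies the explicit boundary-value problem $(\Delta+\omega^{2}n^{2}-P^{2})r=-\omega^{4}n^{2}\bm{W}^{q^{f}}$ with homogeneous Neumann data, and then estimate $r$ directly by the coercivity of $-\Delta+(P^{2}-\omega^{2}n^{2})$. The three energy estimates you invoke — $\Vert q^{f}\Vert_{L^{2}}\lesssim P^{-1}\Vert f\Vert_{H^{-1/2}}$, $\Vert\bm{W}^{q^{f}}\Vert_{L^{2}}\lesssim P^{-2}\Vert q^{f}\Vert_{L^{2}}$, and the weighted Young split yielding $\Vert r\Vert_{H^{1}}\lesssim P^{-1}\Vert\bm{W}^{q^{f}}\Vert_{L^{2}}$ — are all correct and combine to give $\Vert\gamma(r)\Vert_{H^{1/2}}\lesssim P^{-4}\Vert f\Vert_{H^{-1/2}}$, which is exactly the claimed rate. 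The two methods rest on the same underlying mechanism (each Born iterate gains a factor $\omega^{2}P^{-2}$), but they package it differently: the paper's version isolates the Newtonian-operator estimates as stand-alone lemmas reusable elsewhere in the paper (in particular in the proof of Lemma \ref{LZ-Lemma}), whereas your version avoids the infinite series entirely and gets the remainder in one shot from a single PDE energy estimate. One point worth making explicit in your argument, which you only gesture at, is that $P^{2}-\omega^{2}n^{2}\geq cP^{2}$ uniformly; this holds because $\omega^{2}$ stays bounded (close to $\omega_{0}^{2}$) while $P$ is taken large, and it is exactly the same condition $(\ref{CdtCvgS})$ that the paper imposes for convergence of the Neumann series.
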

Therefore knowing $\Lambda_P(f)$ allows us to construct $\bm{W}^{q^{f}}$, for $f \in \mathbb{H}^{-\frac{1}{2}}(\partial \Omega)$.
\medskip

The proof of \textbf{Theorem \ref{THMLinearization}} is based on the observation that the solution operator (i.e. the Lippmann-Schwinger operator) of the problem  $(\ref{EquaWf})$ can be seen as the one of the problem $(\ref{Equaqf})$ plus a 'small' perturbation. The smallness of this perturbation permits us to justify the related linearization. The arguments of the analysis are based on the spectral and scaling properties of the Newtonian operator of the solution operator of  $(\ref{Equaqf})$  via Calderon-Zygmund type estimates.  

\subsection{Construction of $n^{2}(\cdot)$ from the linearization of $\Lambda_{P}(\cdot)$: Theorem\ref{THMReconstruction}}
~\\

The next theorem describes a way how we can reconstruct the sound speed from the linearized part of $\Lambda_{P}(\cdot)$.
\begin{theorem}\label{THMReconstruction}
For every  $l := \left(l_{1};l_{2};l_{3} \right) \in \mathbb{Z}^{3}$, we choose 
\begin{equation}\label{Existencexi}
\xi = \frac{P^{2+\varsigma} \, \left\vert l \right\vert^{2+\varsigma}}{\sqrt{2} \, \sqrt{l^{2}_{2} + l^{2}_{3}}} \; \begin{pmatrix}
- i \left(l_{2}^{2} + l_{3}^{2} \right) \\
- \left\vert l \right\vert \, l_{3} + i \, l_{1} \, l_{2}  \\
- \left\vert l \right\vert \, l_{2} + i \, l_{1} \, l_{3} 
\end{pmatrix},
\text{with} \;\; \varsigma \in \mathbb{R}^{+}.
\end{equation}
Hence,
\begin{equation}\label{normxi}
\left\vert \xi \right\vert = P^{2+\varsigma} \, \left\vert l \right\vert^{3+\varsigma}.
\end{equation}

We set $q^{f}(\cdot):=q^{l, \xi}$ the function defined by 
\begin{equation*}
q^{l, \xi}(x) := e^{i \, \xi \cdot x} \, \left( e^{i \,  x \cdot l} \, + r_{1}(x) \right), \quad x \in \mathbb{R}^{3},
\end{equation*}
and $r_{1}(\cdot)$ is such that
\begin{equation}\label{DiffEquar1Intro}
\left(\Delta + 2 \, i \, \xi \cdot \nabla - P^{2} \right) r_{1}(x) = \left(\left\vert l \right\vert^{2} + P^{2} \right) \; e^{i \, x \cdot l}, \quad \text{in} \quad \Omega.
\end{equation}

In the same manner we set $q^{g}(\cdot):=q^{\xi}(\cdot)$ to be the function defined by 
\begin{equation*}
q^{\xi}(x) := e^{- i \, \xi \cdot x} \left( 1 + r_{2}(x) \right), \quad x \in \mathbb{R}^{3},
\end{equation*}
where $r_{2}(\cdot)$ is such that 
\begin{equation}\label{DiffEquar2Intro}
\left(- \Delta + 2 \, i \, \xi \cdot \nabla + P^{2} \right) r_{2}(x) = - \, P^{2}  \; , \quad \text{in} \quad \Omega.
\end{equation}

Then we have the following approximate reconstruction formula:  
\begin{equation}\label{Reconstruction-formula}
n^{2}(x)   =   \left(2 \, \pi \right)^{-3} \; \sum_{\ell \in \mathbb{Z}^{3}} \langle \bm{W}^{q^{l, \xi}} ; \partial_{\nu} q^{\xi} \; \rangle \;\; e^{i \, \ell \cdot x}  + \mathcal{O}\left( P^{-\varsigma} \right),
\end{equation}
in the $\mathbb{L}^{2}(\Omega)$ sense.
\end{theorem}

The justification of the existence and uniqueness of solutions corresponding to the problems $(\ref{DiffEquar1Intro})$ and $(\ref{DiffEquar2Intro})$ can be found in \cite[Section 3.2]{salo2008calderon}. More precisely, in \cite[Theorem 3.7]{salo2008calderon} the result is proved first for the free case equation, i.e. equation of the form $\left(\Delta + 2 \, i \, \xi \cdot \nabla \right) r = f$, where $r(\cdot)$ is a correction term and $f$ is the source data. Then, in \cite[Theorem 3.8]{salo2008calderon} the general case, i.e. equation of the form $\left(\Delta + 2 \, i \, \xi \cdot \nabla + \textbf{q} \right) r = f$, where $\textbf{q}$ is a potential, was proved under the conditions 
$\xi \cdot \xi = 0 \quad \text{and} \quad \left\vert \xi \right\vert \geq \max\left(C_{0} \, \left\Vert \textbf{q} \right\Vert_{\mathbb{L}^{\infty}(\Omega)}; 1 \right)$, where $C_{0}$ is a constant depending on the domain $\Omega$ and the space dimension.
These nicely re-derived estimates were initially proved in the seminal work \cite[Theorem 1.1, Proposition 2.1]{JSGU}. 

The key observation here is that these CGOs are solutions of fully explicit equations, see (\ref{DiffEquar1Intro}) and (\ref{DiffEquar2Intro}),  which make the representation in (\ref{Reconstruction-formula}) constructive.

\begin{remark}\label{Calderon-test-functions}
In Theorem \ref{THMReconstruction}, we have shown how to construct $n^{2}(\cdot)$ using a discrete series expansion. Actually, we can also use the classical Calder\'on idea to construct the Fourier transform of $n^{2}(\cdot)$. Indeed, choosing any $v$ which solves $\left(\Delta - P^2\right)v =0$ and multiplying it with the PDE for $W^{q^{f}}$, we have
\begin{align}\label{int ineq}
    \int_\Omega n^2(x) q^f(x)\, v(x) \, ds(x) = \int_{\partial\Omega} \partial_\nu v(x) \, W^f(x) \, ds(x).
\end{align}
Now, for $0\neq\xi\in\mathbb{R}^3$, let us consider an orthonormal family $\left\{e_1:=\frac{\xi}{|\xi|}, e_2, e_3 \right\}$ of $\mathbb{R}^3$. Using this basis, we take 
\begin{align*}
    \zeta_1 = \frac{|\xi|}{2} e_1 + i e_2 \sqrt{P^2 + \frac{|\xi|^2}{4}} , \quad \quad \quad \zeta_2 = \frac{|\xi|}{2} e_1 - i e_2 \sqrt{P^2 + \frac{|\xi|^2}{4}}
\end{align*}
and then consider the two functions $
    q^f(x) := e^{i\zeta_1\cdot x}$ and $ v(x) := e^{i\zeta_2\cdot x}$. We see that $~(\Delta - P^2)q^f=0$ and $(\Delta - P^2)v=0$ since $
    \zeta_1 \cdot\zeta_1 = \zeta_2 \cdot\zeta_2 = - P^2.$ We also see that $
    \zeta_1+\zeta_2 =\vert \xi\vert e_2=\xi.$ With this choice, it is immediate that
\begin{align*}
    \int_\Omega n^2(x) q^f(x)\, v(x) \, d(x) = \int_{\Omega} e^{i \xi\cdot x} n^2(x) \, d(x) (\xi),~~~ \xi\in\mathbb{R}^3. 
\end{align*}
which allows to construct the Fourier transform of $n^{2}(\cdot)$ from $(\ref{int ineq})$.
\end{remark}

\bigskip

The remaining parts of the paper are organized as follows. In \textbf{Section \ref{Linearization-step}}, we discuss and justify the linearization step and in \textbf{Section \ref{Construction-C}} we deal with the reconstruction of $n^{2}(\cdot)$ from the linearized NtD map. The justification for the effective NtD is stated in \textbf{Section \ref{effective-NtD}}. This choice is taken because this step is technically the most involved part. 
Finally, we postpone several technical steps to be developed and justified in \textbf{Section \ref{Appendix}} stated as an appendix. 

\section{Proof of Theorem \ref{THMLinearization}}\label{Linearization-step}
The goal of this section is to derive a linearization, up to a first order term, of the NtD map of the equivalent background, i.e., $\Lambda_P(\cdot)$.
Let $u^{f}(\cdot)$ be the solution of the following Lippmann-Schwinger Equation (L.S.E in short) 
\begin{equation}\label{UfLSE}
u^{f}(x) - \omega^{2} \, N^{p}\left(n^{2} \, u^{f} \right)(x) \, = \, q^{f}(x), \quad x \in \Omega,  
\end{equation}
where $q^{f}(\cdot)$ is solution of $(\ref{Equaqf})$, and $N^{p}(\cdot)$ is the Newtonian operator defined, from $\mathbb{L}^{2}(\Omega)$ to $\mathbb{H}^{2}(\Omega)$, by 
\begin{equation}\label{DefNp1752}
    N^{p}(f)(x) \, := \, \int_{\Omega} G_{p}(x,y) \, f(y) \; dy, \quad x \in \Omega, 
\end{equation}
with $G_{p}(\cdot,\cdot)$ is solution of 
\begin{align}\label{Green's-Kernel-with-P}
\begin{cases}  
\left( \underset{x}{\Delta} \, - \, P^{2}  \right) G_{p}(x,y) = - \, \underset{y}{\boldsymbol{\delta}}(x) \quad \text{in} \quad  \Omega,  \\ 
\qquad \; \; \quad \partial_{\nu_{x}} G_{p}(x,y) = 0 \qquad  \text{on} \quad \partial \Omega. 
\end{cases}
\end{align}
In effortless manner we can check that $u^{f}(\cdot)$, solution of $(\ref{UfLSE})$, is also solution of $(\ref{EquaWf})$. Moreover, by an induction process on the L.S.E, given by $(\ref{UfLSE})$, we prove that 
\begin{equation}\label{AV}
u^{f}(x) - q^{f}(x) = \omega^{2} \, \gamma N^{p}\left(n^{2} \,  q^{f} \right)(x) + \sum_{j \geq 2} \left(K_{j} \overset{j}{\otimes} \left( n^{2} \right) \right)(x), \quad x \in \partial \Omega, 
\end{equation}
where 
\begin{eqnarray*}
\left(K_{2} \overset{2}{\otimes} \left( n^{2} \right) \right)(x) 
&=& \left( \omega^{2} \right)^{2} \, \gamma N^{p}\left( n^{2} \,  N^{p}\left( n^{2} \, q^{f} \right)\right)(x) \\
\left(K_{3} \overset{3}{\otimes} \left( n^{2} \right) \right)(x) 
& = & (\omega^{2})^{3} \, \gamma N^{p}\left(n^{2} \,  N^{p}\left( n^{2} \, N^{p}\left(n^{2} \, q^{f} \right) \right)\right)(x) \\
& \vdots & \\
\left(K_{j} \overset{j}{\otimes} \left( n^{2} \right) \right) &=& \left( \omega^{2} \right)^{j} \, \gamma \int_{\Omega} \cdots \int_{\Omega}  G_{p} \cdots G_{p} \, n^{2} \cdots  n^{2} \, q^{f} \, dy_{1} \cdots dy_{j},
\end{eqnarray*}
with $N^{p}(\cdot)$ is the Newtonian operator defined by $(\ref{DefNp1752})$, and $\gamma(\cdot)$ is the trace operator defined from $\mathbb{H}^{s}(\Omega)$ to $\mathbb{H}^{s-\frac{1}{2}}(\partial \Omega)$, $s\geq \frac{1}{2}$, with $\Omega$ a smooth domain. The coming lemma is useful to study the convergence of the previous series with respect to the $\mathbb{H}^{\frac{1}{2}}(\partial \Omega)$-norm. 
\begin{lemma}\label{LemmaNp}
The Newtonian operator given by $(\ref{DefNp1752})$ admits the following estimations, 
\begin{equation}\label{NormNewtonian}
\left\Vert N^{p} \right\Vert_{\mathcal{L}\left(\mathbb{L}^{2}(\Omega); \mathbb{L}^{2}(\Omega) \right)} = \mathcal{O}\left( \frac{1}{P^{2}} \right),
\end{equation}
and 
\begin{equation}\label{TraceNormNewtonian}
\left\Vert \gamma N^{p} \right\Vert_{\mathcal{L}\left(\mathbb{L}^{2}(\Omega); \mathbb{H}^{\frac{1}{2}}(\partial \Omega) \right)} = \mathcal{O}\left( \frac{1}{P} \right).
\end{equation}
\end{lemma}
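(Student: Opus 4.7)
The plan is to read both estimates out of the defining Neumann boundary value problem for $G_p$. Setting $u := N^p(f)$, the function $u$ is the unique solution of
\begin{align*}
\begin{cases}
(\Delta - P^2)\, u = -f & \text{in } \Omega,\\
\partial_\nu u = 0 & \text{on } \partial\Omega,
\end{cases}
\end{align*}
so both bounds are essentially energy estimates for this elliptic problem.

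First, for the $\mathbb{L}^2 \to \mathbb{L}^2$ bound \eqref{NormNewtonian}, I would multiply the equation by $u$, integrate over $\Omega$, and use Green's identity together with the Neumann condition to obtain
\begin{equation*}
\int_\Omega |\nabla u|^2\, dx + P^2 \int_\Omega u^2 \, dx = \int_\Omega f\, u \, dx.
\end{equation*}
Discarding the (nonnegative) gradient term and applying Cauchy--Schwarz to the right-hand side gives $P^2 \|u\|_{\mathbb{L}^2(\Omega)}^2 \leq \|f\|_{\mathbb{L}^2(\Omega)}\, \|u\|_{\mathbb{L}^2(\Omega)}$, hence $\|u\|_{\mathbb{L}^2(\Omega)} \leq P^{-2} \|f\|_{\mathbb{L}^2(\Omega)}$, which is exactly \eqref{NormNewtonian}.

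For the trace estimate \eqref{TraceNormNewtonian}, the same identity, combined with the just-obtained $\mathbb{L}^2$ bound, yields
\begin{equation*}
\|\nabla u\|_{\mathbb{L}^2(\Omega)}^2 \leq \|f\|_{\mathbb{L}^2(\Omega)}\, \|u\|_{\mathbb{L}^2(\Omega)} \leq P^{-2}\, \|f\|_{\mathbb{L}^2(\Omega)}^2,
\end{equation*}
so $\|\nabla u\|_{\mathbb{L}^2(\Omega)} \leq P^{-1}\|f\|_{\mathbb{L}^2(\Omega)}$. Combining this with the already available bound on $\|u\|_{\mathbb{L}^2(\Omega)}$, we conclude $\|u\|_{\mathbb{H}^1(\Omega)} \lesssim P^{-1}\|f\|_{\mathbb{L}^2(\Omega)}$ uniformly for $P \geq 1$. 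Applying the continuity of the trace operator $\gamma : \mathbb{H}^1(\Omega) \to \mathbb{H}^{\frac{1}{2}}(\partial\Omega)$, which is valid since $\Omega$ is $C^2$, gives
\begin{equation*}
\|\gamma u\|_{\mathbb{H}^{\frac{1}{2}}(\partial\Omega)} \lesssim \|u\|_{\mathbb{H}^1(\Omega)} \lesssim P^{-1}\, \|f\|_{\mathbb{L}^2(\Omega)},
\end{equation*}
which is \eqref{TraceNormNewtonian}.

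There is no real obstacle here: both estimates follow from a single integration by parts, and what matters is the bookkeeping of powers of $P$. The only subtlety is to keep the gradient term in the energy identity (rather than discarding it too early) so that it can be reused to promote the $\mathbb{L}^2$ bound into an $\mathbb{H}^1$ bound without losing an extra factor of $P$; once that is done, the trace theorem delivers \eqref{TraceNormNewtonian} immediately.
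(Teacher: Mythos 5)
Your proof is correct. The paper argues the two bounds somewhat differently: for \eqref{NormNewtonian} it invokes the spectral resolvent estimate $\|N^p\|_{\mathcal{L}(\mathbb{L}^2;\mathbb{L}^2)} = \|\mathcal{R}(P^2;\Delta)\| \le 1/\dist\left(P^2,\sigma(\Delta)\right) \le 1/P^2$, using that the Neumann Laplacian on $\Omega$ has nonpositive spectrum, and only for \eqref{TraceNormNewtonian} does it pass to the PDE, test against $N^p(f)$, and apply the trace operator. You instead read both estimates off the single energy identity
\begin{equation*}
\|\nabla u\|_{\mathbb{L}^2(\Omega)}^2 + P^2\|u\|_{\mathbb{L}^2(\Omega)}^2 = \int_\Omega f\,\overline{u}\,dx,
\end{equation*}
dropping the gradient term for the $\mathbb{L}^2\to\mathbb{L}^2$ bound and keeping it for the $\mathbb{H}^1$ bound. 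This is more elementary and self-contained (no appeal to spectral theory), and in fact slightly sharper: the paper's intermediate inequality $\|\nabla N^p(f)\|^2 \le P^2\|N^p(f)\|^2 + \|f\|\,\|N^p(f)\|$ is weaker than your exact identity, though both yield the same $\mathcal{O}(P^{-1})$ rate. The one thing worth stating explicitly in your write-up is that the test function should be $\overline{u}$ rather than $u$, since the fields in this paper are complex-valued; this changes nothing in the estimates.
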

\begin{proof}
See \textbf{Subsection \ref{AS2331}}. 
\end{proof}
For the convergence of the series given into $(\ref{AV})$, we have
\begin{equation}\label{DirectSeries}
\left\Vert \sum_{j \geq 2} \left(K_{j} \overset{j}{\otimes} \left( n^{2} \right) \right) \right\Vert_{\mathbb{H}^{\frac{1}{2}}(\partial \Omega)}  \leq \sum_{j \geq 2} \left\Vert K_{j} \overset{j}{\otimes} \left( n^{2} \right) \right\Vert_{\mathbb{H}^{\frac{1}{2}}(\partial \Omega)}.
\end{equation}
Now, we estimate the terms appearing in the previous series. 
\begin{enumerate}
\item For $j=2$, 
\begin{eqnarray*}
\nonumber
\left\Vert K_{2} \overset{2}{\otimes} \left( n^{2} \right) \right\Vert_{\mathbb{H}^{\frac{1}{2}}(\partial \Omega)} &=& \left( \omega^{2} \right)^{2} \,\left\Vert \gamma N^{p}\left(n^{2} \,  N^{p}\left( n^{2} \, q^{f} \right) \right)\right\Vert_{\mathbb{H}^{\frac{1}{2}}(\partial \Omega)} \\ 
& \leq & \left( \omega^{2} \right)^{2} \, \left\Vert \gamma N^{p} \right\Vert_{\mathcal{L}(\mathbb{L}^{2}(\Omega);\mathbb{H}^{\frac{1}{2}}(\partial \Omega))} \; \left\Vert n^{2} \right\Vert^{2}_{\mathbb{L}^{\infty}(\Omega)} \; \left\Vert N^{p} \right\Vert_{\mathcal{L}(\mathbb{L}^{2}(\Omega);\mathbb{L}^{2}(\Omega))} \; \left\Vert q^{f}  \right\Vert_{\mathbb{L}^{2}(\Omega)}.
\end{eqnarray*}
\item For $j=3$, 
\begin{eqnarray*}
\left\Vert K_{3} \overset{3}{\otimes} \left( n^{2} \right) \right\Vert_{\mathbb{H}^{\frac{1}{2}}(\partial \Omega)} &=& (\omega^{2})^{3} \, \left\Vert \gamma N^{p}\left(n^{2} \,  N^{p}\left( n^{2} \, N^{p}\left(n^{2} \, q^{f} \right) \right)\right) \right\Vert_{\mathbb{H}^{\frac{1}{2}}(\partial \Omega)} \\ 
& \leq & (\omega^{2})^{3} \, \left\Vert \gamma N^{p} \right\Vert_{\mathcal{L}(\mathbb{L}^{2}(\Omega);\mathbb{H}^{\frac{1}{2}}(\partial \Omega))} \; \left\Vert n^{2} \right\Vert^{3}_{\mathbb{L}^{\infty}(\Omega)} \; \left\Vert N^{p} \right\Vert^{2}_{\mathcal{L}(\mathbb{L}^{2}(\Omega);\mathbb{L}^{2}(\Omega))} \; \left\Vert q^{f}  \right\Vert_{\mathbb{L}^{2}(\Omega)}.  
\end{eqnarray*}
\item For an arbitrary $j$, by induction, we can prove that 
\begin{equation}\label{EstimationKj}
\left\Vert K_{j} \overset{j}{\otimes} \left( n^{2} \right) \right\Vert_{\mathbb{H}^{\frac{1}{2}}(\partial \Omega)} \leq \; \bm{\Xi} \;  \, \left\Vert n^{2} \right\Vert_{\mathbb{L}^{\infty}(\Omega)} \; \left( \omega^{2} \, \left\Vert  N^{p} \right\Vert_{\mathcal{L}(\mathbb{L}^{2}(\Omega);\mathbb{L}^{2}(\Omega))} \; \left\Vert n^{2} \right\Vert_{\mathbb{L}^{\infty}(\Omega)} \right)^{j-1},
\end{equation}
where 
\begin{equation}\label{ConstantXi}
\bm{\Xi} := \omega^{2}  \, \left\Vert  q^{f}  \right\Vert_{\mathbb{L}^{2}(\Omega)} \left\Vert  \gamma N^{p} \right\Vert_{\mathcal{L}(\mathbb{L}^{2}(\Omega);\mathbb{H}^{\frac{1}{2}}(\partial \Omega))}. 
\end{equation}
\end{enumerate}
Therefore, by going back to $(\ref{DirectSeries})$ and using the estimation $(\ref{EstimationKj})$, we obtain
\begin{eqnarray}\label{||uf-vf||}
\nonumber
\left\Vert \sum_{j \geq 2} \left(K_{j} \overset{j}{\otimes} \left( n^{2} \right) \right) \right\Vert_{\mathbb{H}^{\frac{1}{2}}(\partial \Omega)} & \leq & \sum_{j \geq 2} \; \bm{\Xi} \; \left\Vert n^{2} \right\Vert_{\mathbb{L}^{\infty}(\Omega)} \; \left( \omega^{2} \; \left\Vert N^{p} \right\Vert_{\mathcal{L}(\mathbb{L}^{2}(\Omega);\mathbb{L}^{2}(\Omega))}  \; \left\Vert n^{2} \right\Vert_{\mathbb{L}^{\infty}(\Omega)} \right)^{j-1} \\ &=& \bm{\Xi} \; \; \left\Vert n^{2} \right\Vert^{2}_{\mathbb{L}^{\infty}(\Omega)} \;  \omega^{2} \; \left\Vert N^{p} \right\Vert_{\mathcal{L}(\mathbb{L}^{2}(\Omega);\mathbb{L}^{2}(\Omega))} \; \sum_{j \geq 0}  \bm{\kappa}^{j},
\end{eqnarray}
where $\bm{\kappa}$ is the parameter given by   $ \;\;   \bm{\kappa} \, := \, \omega^{2} \; \left\Vert n^{2} \right\Vert_{\mathbb{L}^{\infty}(\Omega)} \left\Vert N^{p} \right\Vert_{\mathcal{L}(\mathbb{L}^{2}(\Omega);\mathbb{L}^{2}(\Omega))}.$
Under the condition 
\begin{equation}\label{CdtCvgS}
\bm{\kappa} \; < \; 1,
\end{equation}
the previous series converges. Now, because $\left\Vert N^{p} \right\Vert_{\mathcal{L}(\mathbb{L}^{2}(\Omega);\mathbb{L}^{2}(\Omega))} = \mathcal{O}\left(P^{-2} \right)$, see $(\ref{NormNewtonian})$, then with $P$ large enough; knowing that $\omega^{2} \; \left\Vert n^{2} \right\Vert_{\mathbb{L}^{\infty}(\Omega)} $ is a bounded term, we deduce that the condition $(\ref{CdtCvgS})$ is satisfied. In addition, from $(\ref{||uf-vf||})$, we have
\begin{eqnarray}\label{DSuf-vf}
\nonumber
\left\Vert \sum_{j \geq 2} \left(K_{j} \overset{j}{\otimes} \left( n^{2} \right) \right) \right\Vert_{\mathbb{H}^{\frac{1}{2}}(\partial \Omega)} &=& \mathcal{O}\left( \bm{\Xi} \; \left\Vert N^{p} \right\Vert_{\mathcal{L}(\mathbb{L}^{2}(\Omega);\mathbb{L}^{2}(\Omega))} \right) \\ \nonumber & \overset{(\ref{ConstantXi})}{=} & \mathcal{O}\left(\left\Vert N^{p} \right\Vert_{\mathcal{L}(\mathbb{L}^{2}(\Omega);\mathbb{L}^{2}(\Omega))} \, \left\Vert  q^{f}  \right\Vert_{\mathbb{L}^{2}(\Omega)} \, \left\Vert  \gamma N^{p}\right\Vert_{\mathcal{L}(\mathbb{L}^{2}(\Omega);\mathbb{H}^{\frac{1}{2}}(\partial \Omega))} \right) \\ & \overset{\textbf{Lemma \; \ref{LemmaNp}}}{=} & \mathcal{O}\left(\left\Vert  q^{f}  \right\Vert_{\mathbb{L}^{2}(\Omega)} \, \frac{1}{P^{3}} \right).
\end{eqnarray}
The coming lemma is important to get an estimation of $\left\Vert \displaystyle\sum_{j \geq 2} \left(K_{j} \overset{j}{\otimes} \left( n^{2} \right) \right) \right\Vert_{\mathbb{H}^{\frac{1}{2}}(\partial \Omega)}$, with respect to the data $f(\cdot)$ and the parameter $P$. 
\begin{lemma}\label{Lemma22}
The function $q^{f}(\cdot)$, solution of $(\ref{Equaqf})$, satisfies: 
\begin{equation}\label{Equa214}
\left\Vert q^{f} \right\Vert_{\mathbb{L}^{2}(\Omega)} = \mathcal{O}\left( \left\Vert f \right\Vert_{\mathbb{H}^{-\frac{1}{2}}(\partial \Omega)} \, \frac{1}{P}  \right).
\end{equation}
\end{lemma}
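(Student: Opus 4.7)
The plan is to run a standard energy estimate on the Helmholtz-type problem $(\ref{Equaqf})$ with absorption parameter $P^2$, and then exploit the fact that the absorption term gives us an extra $P^{-1}$ decay in the $L^2$ norm compared to the $H^1$ norm.

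First, I would multiply the equation $(\Delta - P^2) q^f = 0$ by $\overline{q^f}$ and integrate by parts over $\Omega$, using the Neumann condition $\partial_\nu q^f = f$, to obtain the energy identity
\begin{equation*}
\left\Vert \nabla q^{f} \right\Vert_{\mathbb{L}^{2}(\Omega)}^{2} + P^{2} \, \left\Vert q^{f} \right\Vert_{\mathbb{L}^{2}(\Omega)}^{2} \; = \; \langle f ; q^{f} \rangle_{\mathbb{H}^{-\frac{1}{2}}(\partial \Omega) \times \mathbb{H}^{\frac{1}{2}}(\partial \Omega)}.
\end{equation*}
The right-hand side is then controlled by the duality pairing together with the continuity of the trace operator:
\begin{equation*}
\left\vert \langle f ; q^{f} \rangle \right\vert \; \leq \; \left\Vert f \right\Vert_{\mathbb{H}^{-\frac{1}{2}}(\partial \Omega)} \, \left\Vert q^{f} \right\Vert_{\mathbb{H}^{\frac{1}{2}}(\partial \Omega)} \; \lesssim \; \left\Vert f \right\Vert_{\mathbb{H}^{-\frac{1}{2}}(\partial \Omega)} \, \left\Vert q^{f} \right\Vert_{\mathbb{H}^{1}(\Omega)}.
\end{equation*}

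Second, I would use this in two steps. Since we are in the regime $P \gg 1$, the left-hand side dominates $\left\Vert q^{f} \right\Vert_{\mathbb{H}^{1}(\Omega)}^{2}$, so dividing by $\left\Vert q^{f} \right\Vert_{\mathbb{H}^{1}(\Omega)}$ yields the uniform estimate
\begin{equation*}
\left\Vert q^{f} \right\Vert_{\mathbb{H}^{1}(\Omega)} \; \lesssim \; \left\Vert f \right\Vert_{\mathbb{H}^{-\frac{1}{2}}(\partial \Omega)}.
\end{equation*}
Plugging this back into the energy identity, we obtain
\begin{equation*}
P^{2} \, \left\Vert q^{f} \right\Vert_{\mathbb{L}^{2}(\Omega)}^{2} \; \leq \; \left\Vert \nabla q^{f} \right\Vert_{\mathbb{L}^{2}(\Omega)}^{2} + P^{2} \, \left\Vert q^{f} \right\Vert_{\mathbb{L}^{2}(\Omega)}^{2} \; \lesssim \; \left\Vert f \right\Vert_{\mathbb{H}^{-\frac{1}{2}}(\partial \Omega)} \, \left\Vert q^{f} \right\Vert_{\mathbb{H}^{1}(\Omega)} \; \lesssim \; \left\Vert f \right\Vert_{\mathbb{H}^{-\frac{1}{2}}(\partial \Omega)}^{2},
\end{equation*}
and taking square roots gives the claimed bound $(\ref{Equa214})$.

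There is no real obstacle here: the only point that requires attention is the bootstrap from the energy inequality to the sharp $P^{-1}$ decay, which relies on the fact that the $P^2$-weight in the absorption term is exactly what is needed to absorb one factor of $P$ against the $L^2$ norm while leaving the $H^1$ control untouched. The argument uses only the smoothness of $\partial \Omega$ (class $C^2$), which is already assumed, to apply the trace inequality; no spectral information about $-\Delta$ with Neumann data is needed.
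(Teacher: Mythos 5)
Your proof is correct, but it is a genuinely different argument from the one in the paper. You proceed via the energy identity obtained by multiplying $(\Delta - P^2)q^f = 0$ by $\overline{q^f}$ and integrating by parts,
\[
\Vert \nabla q^f \Vert^2_{\mathbb{L}^2(\Omega)} + P^2 \Vert q^f \Vert^2_{\mathbb{L}^2(\Omega)} = \langle f ; q^f \rangle_{\mathbb{H}^{-1/2}(\partial\Omega)\times \mathbb{H}^{1/2}(\partial\Omega)},
\]
then bootstrap: first the left side dominates $\Vert q^f\Vert_{\mathbb{H}^1(\Omega)}^2$ (for $P\ge 1$), giving $\Vert q^f\Vert_{\mathbb{H}^1(\Omega)} \lesssim \Vert f\Vert_{\mathbb{H}^{-1/2}(\partial\Omega)}$, and then the $P^2$ weight on the $\mathbb{L}^2$ term yields the extra $P^{-1}$. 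The paper instead writes $q^f = SL^p(f)$ as a single-layer potential, uses the symmetry of the kernel $G_p$ to transpose $\langle q^f; SL^p(f)\rangle_{\mathbb{L}^2(\Omega)} = \langle f; \gamma N^p(q^f)\rangle_{\partial\Omega}$, and then invokes the operator-norm bound $\Vert \gamma N^p\Vert_{\mathcal{L}(\mathbb{L}^2(\Omega);\mathbb{H}^{1/2}(\partial\Omega))} = \mathcal{O}(P^{-1})$ from Lemma \ref{LemmaNp}. What each buys: your route is self-contained and does not need Lemma \ref{LemmaNp} at all (it only uses the variational identity and the trace inequality), whereas the paper's route is shorter on the page because it recycles an estimate already established by spectral means, and it stays entirely in $\mathbb{L}^2$ without ever invoking $\Vert q^f\Vert_{\mathbb{H}^1(\Omega)}$ as an intermediate quantity. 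Both are sound.
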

\begin{proof}
The solution $q^{f}(\cdot)$ to the problem $(\ref{Equaqf})$ can be represented as $q^{f}(x) \, = \, SL^{p}\left( f \right)(x)$, for $x \, \in \, \Omega$, where $SL^{p}\left( \cdot \right)$ is the Single-Layer operator defined, from $\mathbb{H}^{-\frac{1}{2}}\left( \partial \Omega \right)$ to $\mathbb{H}^{1}\left(\Omega \right)$, by
\begin{equation}\label{Equa0848}
SL^{p}\left( f \right)(x) \, := \, \int_{\partial \Omega} G_{p}(x,y) \, f(y) \, d\sigma(y), \quad x \in \Omega,
\end{equation}
with $G_{p}(\cdot,\cdot)$ is the Green's kernel solution of $(\ref{Green's-Kernel-with-P})$. 
It is clear that $\left( \Delta - \, P^{2} \right) \, q^{f} \, = \, 0$, in $\Omega$. In \textbf{Subsection \ref{JSLp}}, we show that $\partial_\nu SL^p(f)=f$ on $\partial \Omega$.
Multiplying the previous equation by $\overline{q^{f}}(\cdot)$ and integrating over $\Omega$, gives us: 
\begin{eqnarray*}
\left\Vert q^{f} \right\Vert^{2}_{\mathbb{L}^{2}(\Omega)}  =   \left\langle q^{f} ; SL^{p}\left( f \right) \right\rangle_{\mathbb{L}^{2}(\Omega)} &=& 
\left\langle f ; \gamma N^{p} \left( q^{f} \right) \right\rangle_{\mathbb{H}^{-\frac{1}{2}}(\partial \Omega) \times \mathbb{H}^{\frac{1}{2}}(\partial \Omega)}  \\
& \leq & \left\Vert f \right\Vert_{\mathbb{H}^{-\frac{1}{2}}(\partial \Omega)} \, \left\Vert \gamma N^{p} \left( q^{f} \right) \right\Vert_{\mathbb{H}^{\frac{1}{2}}(\partial \Omega)} \\ & \leq & \left\Vert f \right\Vert_{\mathbb{H}^{-\frac{1}{2}}(\partial \Omega)} \, \left\Vert \gamma N^{p} \right\Vert_{\mathcal{L}(\mathbb{L}^{2}(\Omega);\mathbb{H}^{\frac{1}{2}}(\partial \Omega))}  \, \left\Vert q^{f} \right\Vert_{\mathbb{L}^{2}(\Omega)}.
\end{eqnarray*}
Then, 
\begin{equation*}
\left\Vert q^{f} \right\Vert_{\mathbb{L}^{2}( \Omega)} \, \leq \,   \left\Vert f \right\Vert_{\mathbb{H}^{-\frac{1}{2}}(\partial \Omega)} \, \left\Vert \gamma N^{p} \right\Vert_{\mathcal{L}(\mathbb{L}^{2}(\Omega);\mathbb{H}^{\frac{1}{2}}(\partial \Omega))}   \overset{(\ref{TraceNormNewtonian})}{=} \mathcal{O}\left( \left\Vert f \right\Vert_{\mathbb{H}^{-\frac{1}{2}}(\partial \Omega)} \, \frac{1}{P}  \right).
\end{equation*}
This concludes the proof of \textbf{Lemma \ref{Lemma22}}. 
\end{proof}
Using $(\ref{Equa214})$, the estimation $(\ref{DSuf-vf})$ becomes
\begin{equation*}
\left\Vert \sum_{j \geq 2} \left(K_{j} \overset{j}{\otimes} \left( n^{2} \right) \right) \right\Vert_{\mathbb{H}^{\frac{1}{2}}(\partial \Omega)} =  \mathcal{O}\left(\left\Vert  f  \right\Vert_{\mathbb{H}^{-\frac{1}{2}}(\partial \Omega)} \, \frac{1}{P^{4}} \right).
\end{equation*}
Hence, from $(\ref{AV})$, we get 
\begin{equation}\label{ASMTV0521}
u^{f}(x) - q^{f}(x) = \omega^{2} \, \gamma \left( \bm{W}^{q^{f}} \right) (x) + \mathcal{O}\left(\left\Vert  f  \right\Vert_{\mathbb{H}^{-\frac{1}{2}}(\partial \Omega)} \, \frac{1}{P^{4}} \right), \quad x \in \partial \Omega,
\end{equation}
where $\bm{W}^{q^{f}} = N^{p}\left(n^{2} \,  q^{f} \right)$ is the function satisfying
\begin{align}\label{ASKD}
\begin{cases}  
\left(  \Delta -  P^{2} \, I \right) \bm{W}^{q^{f}} \, = - \, n^{2} \, q^{f} \quad \text{in} \quad  \Omega,  \\ 
\qquad \qquad \, \partial_{\nu} \bm{W}^{q^{f}} = 0  \quad \quad \; \quad  \text{on} \quad \partial \Omega. 
\end{cases}
\end{align}
Because on the boundary $\partial \Omega$, we have $u^{f} = \Lambda_{P}\left(\partial_{\nu} u^{f} \right) \overset{(\ref{EquaWf})}{=} \Lambda_{P}\left( f \right)$ and by plugging it into $(\ref{ASMTV0521})$ we derive $(\ref{ASMTV})$. This concludes the proof of \textbf{Theorem \ref{THMLinearization}}.
\section{Proof of Theorem \ref{THMReconstruction}}\label{Construction-C}
The purpose of this section is to explain how the linearized NtD map (measured on the boundary $\partial \Omega$) can be utilized with CGO solutions to reconstruct the Fourier coefficients associated with the unknown refraction index $n^{2}(\cdot)$. Hence, we reconstruct the refraction index of $n^{2}(\cdot)$ inside $\Omega$ as a discrete series expansion using the reconstructed Fourier coefficients.  
From the previous section, we deduce that measuring $u^{f}(\cdot) - q^{f}(\cdot)$ means measuring, approximately, $\bm{W}^{q^{f}}(\cdot)$, on the boundary $\partial \Omega$. We set $q^{g}(\cdot)$ to be the solution of 
\begin{align}\label{Equavg}
\begin{cases}  
\left( \Delta -  P^{2} \, I \right) q^{g} \, = 0 \quad \text{in} \quad  \Omega,  \\ 
\qquad \, \quad \quad \partial_{\nu} q^{g} = g  \;\;\;  \text{on} \quad \partial \Omega. 
\end{cases}
\end{align}
Multiplying the first equation of $\left( \ref{ASKD} \right)$ with $q^{g}$, solution of $(\ref{Equavg})$, and integrating over $\Omega$, we get:
\begin{equation*}
\langle \nabla \bm{W}^{q^{f}}; \nabla q^{g} \rangle_{\mathbb{L}^{2}(\Omega)} + P^{2} \, \langle  \bm{W}^{q^{f}}; q^{g} \rangle_{\mathbb{L}^{2}(\Omega)} = \langle n^{2} \, q^{f}; q^{g} \rangle_{\mathbb{L}^{2}(\Omega)}.
\end{equation*}
Moreover, by multiplying $\left( \ref{Equavg} \right)$ with $\bm{W}^{q^{f}}$, solution of $(\ref{ASKD})$, and integrating over $\Omega$, we get  
\begin{equation*}
\langle \nabla \bm{W}^{q^{f}}; \nabla q^{g} \rangle_{\mathbb{L}^{2}(\Omega)} + P^{2} \, \langle  \bm{W}^{q^{f}}; q^{g} \rangle_{\mathbb{L}^{2}(\Omega)}  = \langle \bm{W}^{q^{f}};g \rangle_{\mathbb{H}^{\frac{1}{2}}(\partial \Omega) \times \mathbb{H}^{-\frac{1}{2}}(\partial \Omega)}.
\end{equation*}
Then, by subtracting the two previous equations we end up with 
\begin{equation}\label{Equavfvg}
\langle \bm{W}^{q^{f}};g \rangle_{\mathbb{H}^{\frac{1}{2}}(\partial \Omega) \times \mathbb{H}^{-\frac{1}{2}}(\partial \Omega)} = \langle n^{2} \, q^{f}; q^{g} \rangle_{\mathbb{L}^{2}(\Omega)}, \;  \quad \forall \, \left( f, g \right) \in \mathbb{H}^{-\frac{1}{2}}(\partial \Omega) \times \mathbb{H}^{-\frac{1}{2}}(\partial \Omega).
\end{equation}
Knowing that $\bm{W}^{q^{f}}$ can be measured, on the boundary $\partial \Omega$, and $g$ is a data function, we deduce that the L.H.S is a known term. The goal is then to reconstruct $\, n^{2}(\cdot) \,$, inside $\Omega$. To achieve this, we start by fixing $\eta \in \mathbb{R}^{3}$ and choosing $\xi \in \mathbb{C}^{3}$ such that
\begin{equation}\label{cdtxixi}
\xi \cdot \xi = 0. 
\end{equation}
We set $q^{f}(\cdot)$ the function defined by 
\begin{equation}\label{Defvf}
q^{f}(x) := e^{i \, \xi \cdot x} \, \left( e^{i \,  x \cdot \eta} \, + r_{1}(x) \right), \quad x \in \mathbb{R}^{3},
\end{equation}
where $\xi$ is chosen such that\footnote{For every fixed $\eta \in \mathbb{R}^{3}$, we choose $\xi \in \mathbb{C}^{3}$ such that $(\ref{cdtxixi})$ and $(\ref{cdtetaxi})$ will be fulfilled. Such $\xi$ exists, see $(\ref{Existencexi})$.} 
\begin{equation}\label{cdtetaxi}
\eta \cdot \xi = 0,
\end{equation}
and $r_{1}(\cdot)$ is such that 
\begin{equation}\label{DiffEquar1}
\left(\Delta + 2 \, i \, \xi \cdot \nabla - P^{2} \right) r_{1}(x) = \left(\left\vert \eta \right\vert^{2} + P^{2} \right) \; e^{i \, x \cdot \eta}, \quad \text{in} \quad \Omega.
\end{equation}
Observe that the R.H.S is depending on $\eta$ and $P$,  then  $r_{1}(\cdot)$ will also depends on both $\eta$ and $P$. Later, to mark this dependence, we note $r_{1,\eta,p}(\cdot)$ instead of $r_{1}(\cdot)$. Thanks to \cite[Theorem 3.8]{salo2008calderon}, we know that under the condition 
\begin{equation}\label{Cdtxi}
\left\vert \xi \right\vert \geq \max\left( C_{0} \, P^{2} ; 1 \right) = \, C_{0} \, P^{2}, 
\end{equation}
where the last equality is a consequence of the fact that $P \gg 1$,  and $C_{0}$ is a constant depending on $\Omega$, the equation $(\ref{DiffEquar1})$
has a solution $r_{1,\eta,p}(\cdot) \in \mathbb{H}^{1}(\Omega)$ satisfying 
\begin{equation}\label{Estimationr1}
\left\Vert r_{1,\eta,p} \right\Vert_{\mathbb{L}^{2}(\Omega)} \leq  \frac{C_{0}}{\left\vert \xi \right\vert} \; \left(\left\vert \eta \right\vert^{2} + P^{2} \right) \;  \left\vert \Omega \right\vert^{\frac{1}{2}} \quad \text{and} \quad \left\Vert \nabla r_{1,\eta,p} \right\Vert_{\mathbb{L}^{2}(\Omega)} \leq  C_{0} \; \left(\left\vert \eta \right\vert^{2} + P^{2} \right) \;  \left\vert \Omega \right\vert^{\frac{1}{2}}.
\end{equation} 
In the same manner we set $q^{g}(\cdot)$ to be the function defined by 
\begin{equation}\label{Defvg}
q^{g}(x) := e^{- i \, \xi \cdot x} \left( 1 + r_{2}(x) \right), \quad x \in \mathbb{R}^{3},
\end{equation}
where $r_{2}(\cdot)$ is such that 
\begin{equation}\label{DiffEquar2}
\left(- \Delta + 2 \, i \, \xi \cdot \nabla + P^{2} \right) r_{2}(x) = - \, P^{2}  \; , \quad \text{in} \quad \Omega.
\end{equation}
Because the R.H.S is depending on $P$, the solution $r_{2}(\cdot)$ will also depends on  $P$. Later, to mark this dependence, we note $r_{2,p}(\cdot)$ instead of $r_{2}(\cdot)$.
Again, thanks to \cite[Theorem 3.8]{salo2008calderon}, we know that under the condition $(\ref{Cdtxi})$, the equation $(\ref{DiffEquar2})$
has a solution $r_{2,p}(\cdot) \in \mathbb{H}^{1}(\Omega)$, satisfying 
\begin{equation}\label{Estimationr2}
\left\Vert r_{2,p} \right\Vert_{\mathbb{L}^{2}(\Omega)} \leq \frac{C_{0}}{\left\vert \xi \right\vert} \;  P^{2}  \;  \left\vert \Omega \right\vert^{\frac{1}{2}} \quad \text{and} \quad \left\Vert \nabla r_{2,p} \right\Vert_{\mathbb{L}^{2}(\Omega)} \leq  C_{0} \;  P^{2}  \;  \left\vert \Omega \right\vert^{\frac{1}{2}}.
\end{equation} 
Now, we take unit vectors $\omega_{1}$ and $\omega_{2}$ in $\mathbb{R}^{3}$ such that $\{ \omega_{1}; \omega_{2}; \eta \}$ is an orthogonal set. In addition, we choose $\xi = s \left(\omega_{1} + i \, \omega_{2} \right)$, so that $\left\vert \xi \right\vert = s \, \sqrt{2}$ and $\xi \cdot \xi = 0$. Using the fact that $P \gg 1$ and taking the parameter $s$ sufficiently large, such that $(\ref{Cdtxi})$ will be satisfied, we reduce the estimation of the $\mathbb{L}^{2}(\Omega)-$norm of $r_{1,\eta,p}(\cdot)$ and $r_{2,p}(\cdot)$ to
\begin{equation}\label{Estimationrj}
\left\Vert r_{1,\eta,p} \right\Vert_{\mathbb{L}^{2}(\Omega)} = \mathcal{O}\left( \frac{P^{2}}{s} \right) \quad \text{and} \quad \left\Vert r_{2,p} \right\Vert_{\mathbb{L}^{2}(\Omega)} = \mathcal{O}\left( \frac{P^{2}}{s} \right).
\end{equation} 
Next, by taking the product between $q^{f}(\cdot)$, given by $(\ref{Defvf})$, and $q^{g}(\cdot)$, given by $(\ref{Defvg})$, we obtain 
\begin{equation}\label{CGOvfvg}
\left( q^{f} \cdot q^{g} \right)(x) = e^{i \,  x \cdot \eta} + r_{1,\eta ,p}(x) + e^{i \,  x \cdot \eta} \, r_{2,p}(x) + r_{1, \eta, p}(x) r_{2,p}(x),
\end{equation}
and we would like to choose the solution in such a way that $\left( q^{f} \cdot q^{g} \right)(\cdot)$ is close to $e^{i \, \cdot \cdot \eta}$, since the functions $\left\{ e^{i \, \cdot \cdot \eta} \right\}$ form a dense set, see \cite[Theorem 1.1]{VI}, in $\mathbb{L}^{1}(\Omega)$. By going back to $(\ref{Equavfvg})$, we have 
\begin{equation*}
\langle \bm{W}^{q^{f}};g \rangle_{\mathbb{H}^{\frac{1}{2}}(\partial \Omega) \times \mathbb{H}^{-\frac{1}{2}}(\partial \Omega)} = \int_{\Omega} \, n^{2}(x) \; q^{f}(x) \; q^{g}(x) \; dx 
 \overset{(\ref{CGOvfvg})}{=}  \int_{\Omega} \, n^{2}(x) \; e^{i \,  x \cdot \eta}  \; dx + Error(\eta ,p),
\end{equation*}
where
\begin{equation*}
Error(\eta ,p) := \int_{\Omega} \, n^{2}(x) \;  r_{1, \eta ,p}(x)  \; dx + \int_{\Omega} \, n^{2}(x) \;  e^{i \,  x \cdot \eta} \, r_{2, p}(x) \; dx + \int_{\Omega} \, n^{2}(x) \;  r_{1, \eta , p}(x) r_{2, p}(x) \; dx,
\end{equation*}
which can be estimated as
\begin{equation*}
\left\vert Error(\eta ,p) \right\vert 
 \leq  \left\Vert n^{2} \right\Vert_{\mathbb{L}^{\infty}(\Omega)} \left[ \left\Vert r_{1, \eta ,p} \right\Vert_{\mathbb{L}^{2}(\Omega)} \, \left\vert \Omega \right\vert^{\frac{1}{2}} + \left\Vert r_{2, p} \right\Vert_{\mathbb{L}^{2}(\Omega)} \, \left\vert \Omega \right\vert^{\frac{1}{2}} +  \left\Vert r_{1, \eta ,p} \right\Vert_{\mathbb{L}^{2}(\Omega)} \, \left\Vert r_{2, p} \right\Vert_{\mathbb{L}^{2}(\Omega)} \right],
\end{equation*}
which, based on $(\ref{Estimationr1})$ and $(\ref{Estimationr2})$, can be reduced to
\begin{equation}\label{GzGz}
\left\vert Error(\eta ,p) \right\vert \lesssim  \frac{\left( \left\vert \eta \right\vert^{2} + P^{2} \right)}{\left\vert \xi \right\vert}  \overset{(\ref{Estimationrj})}{=}  \mathcal{O}\left( \frac{P^{2}}{s}  \right) =  \mathcal{O}\left( \frac{P^{2}}{\left\vert \xi \right\vert}  \right).
\end{equation}
Moreover, based on its construction, see $(\ref{Defvf})$, the function $q^{f}(\cdot)$ depends on $\eta$ and this implies the dependency of $\bm{W}^{q^{f}}(\cdot)$ with respect to $\eta$. We mark explicitly this dependence and we write:
\begin{equation}\label{FC}
\langle \bm{W}^{q^{f}}_{\eta} ; g \rangle_{\mathbb{H}^{\frac{1}{2}}(\partial \Omega) \times \mathbb{H}^{-\frac{1}{2}}(\partial \Omega)} - Error(\eta ,p) =  \int_{\Omega} \, n^{2}(x) \; e^{i \,  x \cdot \eta}  \; dx,  
\end{equation}
which is valid in $\Lambda_{\eta} := \left\{ \xi \in \mathbb{C}^{3},\quad \text{such that} \quad \left\vert \xi \right\vert \gg 1 \text{,} \quad \xi \cdot \xi = 0 \quad \text{and} \quad \xi \cdot \eta = 0 \right\},$
where $\eta$ is fixed in $\mathbb{R}^{3}$. The set $\Lambda_{\eta}$ is not empty, see $(\ref{Existencexi})$. By restricting $\eta$ to $\mathbb{Z}^{3}$, i.e. $\eta = - \ell$ with $\ell \in \mathbb{Z}^{3}$, we rewrite $(\ref{FC})$ as 
\begin{equation}\label{FCk}
\langle \bm{W}^{q^{f}}_{-\ell};g \rangle_{\mathbb{H}^{\frac{1}{2}}(\partial \Omega) \times \mathbb{H}^{-\frac{1}{2}}(\partial \Omega)} - Error(-\ell, p) =  \int_{\Omega} \, n^{2}(x) \; e^{- \, i \,  x \cdot \ell}  \; dx = \left( 2 \pi \right)^{3} \, \mathcal{F}\left(n^{2} \, \underset{\Omega}{\chi}\right)(\ell),  
\end{equation}
where $\mathcal{F}(\cdot)$ is the 3D-Fourier transform operator\footnote{We recall that we have 
\begin{equation*}
\mathcal{F}(f)(\ell) := \left( 2 \, \pi \right)^{-3} \, \int_{\mathbb{R}^{3}} f(x) \, e^{- i \, \ell \cdot x} \; dx, \quad \ell \in \mathbb{Z}^{3}.
\end{equation*}
}. Now, thanks to \cite[Theorem 2.3]{salo2008calderon}, we know that  
\begin{equation*}
n^{2}(x) = \sum_{\ell \in \mathbb{Z}^{3}} \mathcal{F}\left(n^{2} \, \underset{\Omega}{\chi}\right)(\ell) \; e^{i \, \ell \cdot x}, \quad \, x \in \Omega,
\end{equation*} 
with convergence in the $\mathbb{L}^{2}(\Omega)$-norm. Then, by gathering the previous expression and $(\ref{FCk})$, we end up with
\begin{equation}\label{NR0821}
n^{2}(x)  =   \left(2 \, \pi \right)^{-3} \; \sum_{\ell \in \mathbb{Z}^{3}} \int_{\partial \Omega} \bm{W}^{q^{f}}_{-\ell}(x) \; g(x) \; d\sigma(x)    \;\; e^{i \, \ell \cdot x} + \textbf{Error(x,p)},
\end{equation}
in the $\mathbb{L}^2(\Omega)$ sense, 
where $\textbf{Error(x,p)}$ is a trigonometric series given by 
\begin{equation*}
\textbf{Error(x,p)} := - \left(2 \, \pi \right)^{-3} \; \sum_{\ell \in \mathbb{Z}^{3}}  Error(-\ell , p) \; e^{i \, \ell \cdot x}, \quad x \in \Omega.
\end{equation*}
Next, we estimate the $\mathbb{L}^{2}(\Omega)$ norm of $\textbf{Error($\cdot$ ,p)}$. We have, 
\begin{equation*}
\left\Vert \textbf{Error($\cdot$ ,p)} \right\Vert_{\mathbb{L}^{2}(\Omega)} \lesssim  \sum_{\ell \in \mathbb{Z}^{3}} \left\vert  Error(-\ell , p) \right\vert \overset{(\ref{GzGz})}{\lesssim}  \sum_{\ell \in \mathbb{Z}^{3}} \frac{\left( \left\vert \ell \right\vert^{2} + P^{2} \right)}{\left\vert \xi \right\vert}.
\end{equation*}
At this stage, we recall that for every fixed $\ell \in \mathbb{Z}^{3}$, we choose $\xi \in \mathbb{C}^{3}$ such that 
\begin{equation*}
\xi \cdot \xi = 0, \;\; \ell \cdot \xi = 0 \;\; \text{and} \;\; \left\vert \xi \right\vert \gg 1.
\end{equation*}
Such $\xi$ exists,  see $(\ref{Existencexi})$. Without loss of generality, we take $\xi$ satisfying $(\ref{normxi})$, hence $\left\vert \xi \right\vert = P^{2+\varsigma} \, \left\vert \ell \right\vert^{3+\varsigma}$, with $\varsigma \in \mathbb{R}^{+}$. Then, 
\begin{equation*}
\left\Vert \textbf{Error($\cdot$ ,p)} \right\Vert_{\mathbb{L}^{2}(\Omega)} \lesssim  \sum_{\ell \in \mathbb{Z}^{3}} \frac{\left( \left\vert \ell \right\vert^{2} + P^{2} \right)}{P^{2+\varsigma} \, \left\vert \ell \right\vert^{3+\varsigma}} = P^{-2-\varsigma} \sum_{\ell \in \mathbb{Z}^{3}} \frac{1}{\, \left\vert \ell \right\vert^{1+\varsigma}} + P^{-\varsigma} \sum_{\ell \in \mathbb{Z}^{3}} \frac{1}{ \left\vert \ell \right\vert^{3+\varsigma}}.
\end{equation*}
After that, we use the convergence of the two previous series to reduce the last estimation to
\begin{equation*}
\left\Vert \textbf{Error($\cdot$ ,p)} \right\Vert_{\mathbb{L}^{2}(\Omega)} = \mathcal{O}\left( P^{-\varsigma} \right).
\end{equation*}
Hence, $(\ref{NR0821})$ becomes, 
\begin{equation*}\label{NR0836}
n^{2}(x)  =   \left(2 \, \pi \right)^{-3} \; \sum_{\ell \in \mathbb{Z}^{3}} \langle \bm{W}^{q^{f}}_
{-\ell};g \rangle_{\mathbb{H}^{\frac{1}{2}}(\partial \Omega) \times \mathbb{H}^{-\frac{1}{2}}(\partial \Omega)}    \;\; e^{i \, \ell \cdot x} +\mathcal{O}\left( P^{-\varsigma} \right), 
\end{equation*}
in the $\mathbb{L}^2(\Omega)$ sense. This ends the proof of \textbf{Theorem \ref{THMReconstruction}}.   
\addtocontents{toc}{\setcounter{tocdepth}{1}}
\section{Proof of Theorem \ref{principal-Thm}}\label{effective-NtD}
The purpose of this section is to prove Theorem \ref{principal-Thm}. To ensure easy reading of the proof, we have divided this section into four subsections. The goal of the first subsection is to extract the dominant term of  
\begin{equation*}
\bm{I_1} := \omega^{2} \, \left\langle \left(\frac{\rho_{1}}{k_{1}} \, - \, n^{2} \right) v^{g}; p^{f} \right\rangle_{\mathbb{L}^{2}(D)}, 
\end{equation*}
where we prove that 
\begin{equation*}
\bm{I}_{1}  =  \omega^{2} \, \dfrac{\rho_{1}}{k_{1}}  \, \sum_{j=1}^{M} \, p^{f}(z_{j}) \, \int_{D_{j}}   v^{g}_{j}(x) \,  dx + Error, 
\end{equation*}
see $(\ref{I1-formula-Intro})$. In the second subsection we derive and we justify the invertibility of the discrete  algebraic system satisfied by the vector $\left( \int_{D_{j}}   v^{g}_{j}(x) \,  dx \right)_{j=1,\cdots,M}$, contained in $\bm{I_1}$, see $(\ref{0820})$ and \textbf{Lemma \ref{MTR}}. The third subsection consists in writing down the L.S.E, satisfied by $u^{g}(\cdot)$, where $u^{g}(\cdot)$ is the function appearing in 
\begin{equation*}
\bm{I_2} := - \, P^{2} \, \langle u^{g}; p^{f} \rangle_{\mathbb{L}^{2}(\Omega)}, 
\end{equation*}
see $(\ref{NewL.S.E})$. Then, we prove that the discrete algebraic system can approximate the continuous L.S.E, see $(\ref{maxY-Y})$. The goal of the last subsection lies in the justification of the convergence of $\bm{I_1}$ to $\bm{I_2}$ for a large number of droplets, that is, $M \gg 1$.
\medskip
\newline
To avoid making this section heavy and cumbersome, we have noted six lemmas without proofs. The proof of each lemma can be found in \textbf{Section \ref{Appendix}}.
\subsection{Extraction of the dominant term of $I_{1}$}
We set
\begin{equation*}
\bm{I}_{1} := \omega^{2} \, \dfrac{\rho_{1}}{k_{1}} \, \, \langle v^{g}; p^{f} \rangle_{\mathbb{L}^{2}(D)} \, - \, \omega^{2}  \, \langle n^{2} \, v^{g}; p^{f} \rangle_{\mathbb{L}^{2}(D)} = \omega^{2} \, \dfrac{\rho_{1}}{k_{1}}  \, \sum_{j=1}^{M} \, \int_{D_{j}}   v^{g}_{j}(x) \, p^{f}(x) \, dx\,- \, \omega^{2}  \, \langle n^{2} \, v^{g}; p^{f} \rangle_{\mathbb{L}^{2}(D)},
\end{equation*}
where $v^{g}(\cdot)$ satisfies $(\ref{Equavf})$, $p^{f}(\cdot)$ is solution of $(\ref{EquaKg-introdution})$ and we have used the notation $v^{g}_{j}(\cdot) \, := \, v^{g}_{|_{D_{j}}}(\cdot),$ for $j=1,\cdots,M$. In addition, as the coefficients $n^{2}(\cdot)$ is $\mathbb{W}^{1, \infty}$-regular, then $p^f(\cdot)$, which is in $\mathbb{H}^{1}(\Omega)$, enjoys a $\mathbb{W}^{2, \infty}$-interior regularity. Based on this, we use Taylor expansion near the centers, $z_j$, to get 
\begin{equation}\label{I1=Dp+J1}
\bm{I}_{1}  \, = \, \omega^{2} \, \dfrac{\rho_{1}}{k_{1}}  \, \sum_{j=1}^{M} \, p^{f}(z_{j}) \, \int_{D_{j}}   v^{g}_{j}(x) \,  dx \, + \, J_{1}, 
\end{equation}
where 
\begin{equation*}
    J_{1} \, := \,  \omega^{2} \, \dfrac{\rho_{1}}{k_{1}} \, \sum_{j=1}^{M} \, \int_{D_{j}}   v^{g}_{j}(x) \, \int_{0}^{1} \nabla p^{f}(z_{j}+t(x-z_{j})) \cdot (x-z_{j}) \, dt \, dx \, - \, \omega^{2}  \, \langle n^{2} \, v^{g}; p^{f} \rangle_{\mathbb{L}^{2}(D)}.
\end{equation*}
We estimate the term $J_{1}$ as 
\begin{eqnarray}\label{EstJ1}
\nonumber
\left\vert J_{1} \right\vert 
& \lesssim & a^{-2} \, \sum_{j=1}^{M} \left\Vert v^{g}_{j} \right\Vert_{\mathbb{L}^{2}(D_{j})} \; \left\Vert \int_{0}^{1}  \nabla p^{f}(z_{j}+t(\cdot - z_{j})) \cdot ( \cdot - z_{j}) \, dt \,  \right\Vert_{\mathbb{L}^{2}(D_{j})} \, +  \, \left\Vert v^{g} \right\Vert_{\mathbb{L}^{2}(D)} \; \left\Vert p^{f} \right\Vert_{\mathbb{L}^{2}(D)} \\ \nonumber
& \leq & a^{-2} \, \left\Vert v^{g} \right\Vert_{\mathbb{L}^{2}(D)} \; \left( \sum_{j=1}^{M}  \left\Vert \int_{0}^{1}  \nabla p^{f}(z_{j}+t(\cdot - z_{j})) \cdot ( \cdot - z_{j}) \, dt \,  \right\Vert^{2}_{\mathbb{L}^{2}(D_{j})} \right)^{\frac{1}{2}} +  \, \left\Vert v^{g} \right\Vert_{\mathbb{L}^{2}(D)} \; \left\Vert p^{f} \right\Vert_{\mathbb{L}^{2}(D)} \\
& = & \mathcal{O}\left( \left\Vert v^{g} \right\Vert_{\mathbb{L}^{2}(D)} \; \left[   a^{-1} \;   \left\Vert \nabla p^{f} \,  \right\Vert_{\mathbb{L}^{2}(D)} \, + \, \left\Vert p^{f} \,  \right\Vert_{\mathbb{L}^{2}(D)} \right]  \right).
\end{eqnarray}
Moreover, based on $(\ref{EquaKg-introdution})$ we deduce that $p^{f}(\cdot)$ can be represented as a Single-Layer with density $f(\cdot)$, i.e.,
\begin{equation}\label{pf=Slf}
    p^{f}(x) \, = \, \mathscr{S}(f)(x) \, := \, \int_{\partial \Omega} G(x,y) \, f(y) \, d\sigma(y) \, = \, \langle G(x,\cdot); f \rangle_{\mathbb{H}^{\frac{1}{2}}(\partial \Omega) \times \mathbb{H}^{-\frac{1}{2}}(\partial \Omega)}, \quad x \in \Omega,
\end{equation}
where $G(\cdot,\cdot)$ is the Green's kernel defined by 
\begin{align}\label{EquaGKernel}
\begin{cases}  
\underset{x}{\Delta} G(x,y) \, + \, \omega^{2} \, n^{2}(x) \, G(x,y) \, = \, - \, \underset{y}{\boldsymbol{\delta}}(x) \quad \text{in} \quad  \Omega,  \\ 
\qquad \qquad \qquad \qquad \partial_{\nu_{x}} G(x,y) \, = \, \quad 0 \;\;\;  \text{on} \quad \partial \Omega. 
\end{cases}
\end{align}
    The existence, uniqueness of $G(\cdot,\cdot)$ and its singularity analysis, with point-wise estimates, can be found in \cite{McLean}. Based on $(\ref{pf=Slf})$, we have 
\begin{equation}\label{InPa}
 \left\Vert p^{f} \right\Vert_{\mathbb{L}^{2}(D)} \, \leq \, \left\Vert f \right\Vert_{\mathbb{H}^{-\frac{1}{2}}(\partial \Omega)} \, \left[\int_{D} \left\Vert G(x,\cdot) \right\Vert^{2}_{\mathbb{H}^{\frac{1}{2}}(\partial \Omega)} \, dx \right]^{\frac{1}{2}}. 
\end{equation}
In addition, we have 
\begin{equation*}
    \left\Vert G(x,\cdot)   \right\Vert_{\mathbb{H}^{\frac{1}{2}}(\partial \Omega)} \, = \, \underset{\mathsf{G}(x,\cdot) \in \mathbb{H}^{1}(\Omega) \atop \mathsf{G}(x,\cdot)|_{\partial \Omega} \, = \, G(x,\cdot)}{Inf} \left\Vert \mathsf{G}(x,\cdot)   \right\Vert_{\mathbb{H}^{1}(\Omega)}.
\end{equation*}
Let the domain $\Omega^{\diamond} \, \equiv \, \Omega \setminus \overline{D}$, and let $\mathsf{G}(x,\cdot) \, := \, G(x,\cdot) \, \chi_{\Omega^{\diamond}}(\cdot)$. Thus, by its construction $\mathsf{G}(x,\cdot) \in \mathbb{H}^{1}(\Omega)$, for $x \in D$, and $\gamma\left( \mathsf{G}(x,\cdot)\right) \, = \, G(x,\cdot)|_{\partial \Omega}$, on $\partial \Omega$, where $\gamma(\cdot)$ is the trace operator. This implies that 
\begin{equation}\label{Equa1014IP}
    \left\Vert G(x,\cdot)   \right\Vert_{\mathbb{H}^{\frac{1}{2}}(\partial \Omega)} \, \leq \,  \left\Vert G(x,\cdot) \, \chi_{\Omega^{\diamond}}(\cdot)   \right\Vert_{\mathbb{H}^{1}(\Omega)} \, = \, \left\Vert G(x,\cdot)  \right\Vert_{\mathbb{H}^{1}(\Omega^{\diamond})}.
\end{equation}
Then, by plugging $(\ref{Equa1014IP})$ into $(\ref{InPa})$, we deduce 
\begin{equation*}
\left\Vert p^{f} \right\Vert_{\mathbb{L}^{2}(D)}  \,   \leq \, \left\Vert f \right\Vert_{\mathbb{H}^{-\frac{1}{2}}(\partial \Omega)} \, \left[  \int_{D} \left\Vert G(x,\cdot)   \right\Vert^{2}_{\mathbb{H}^{1}(\Omega^{\diamond})} \, dx \, \right]^{\frac{1}{2}}.
\end{equation*}
Recalling that 
\begin{equation*}
    \left\Vert G(x,\cdot)   \right\Vert^{2}_{\mathbb{H}^{1}(\Omega^{\diamond})} \, := \,     \left\Vert \nabla G(x,\cdot)   \right\Vert^{2}_{\mathbb{L}^{2}(\Omega^{\diamond})} \, + \,     \left\Vert G(x,\cdot)   \right\Vert^{2}_{\mathbb{L}^{2}(\Omega^{\diamond})}, 
\end{equation*}
we deduce, since $\vert \nabla G(x, y)\vert \, = \, \mathcal{O}\left(\vert x-y\vert^{-2}\right)$, that
\begin{equation}\label{ZMEqua0445}
\left\Vert p^{f} \right\Vert_{\mathbb{L}^{2}(D)}  \,    \lesssim  \, \left\Vert f \right\Vert_{\mathbb{H}^{-\frac{1}{2}}(\partial \Omega)} \, \left[ \int_{D} \left\Vert \nabla G(x,\cdot)   \right\Vert^{2}_{\mathbb{L}^{2}(\Omega^{\diamond})} \, dx \, \right]^{\frac{1}{2}} \, \lesssim  \, \left\Vert f \right\Vert_{\mathbb{H}^{-\frac{1}{2}}(\partial \Omega)} \, \left[ \int_{D} \int_{\Omega^{\diamond}} \frac{1}{\left\vert x - y \right\vert^{4}} \, dy  \, dx \, \right]^{\frac{1}{2}}.
\end{equation}
We have, for $x \in D$ and $y \in \Omega^{\diamond}$, see \textbf{Assumption \ref{AssDmDist}}, that
\begin{equation}\label{distxykappaa}
    \left\vert x - y \right\vert \, \geq \, \dist\left( D, \partial \Omega^{\diamond} \right) \, = \, \dist\left( D, \partial \Omega \right)\, \geq \, \kappa(a),
\end{equation}
which implies, 
\begin{eqnarray}\label{SH-Add-proof-1}
\nonumber
\left\Vert p^{f} \right\Vert_{\mathbb{L}^{2}(D)}   \, & \lesssim & \, \left\Vert f \right\Vert_{\mathbb{H}^{-\frac{1}{2}}(\partial \Omega)} \, \left[ \left\vert D \right\vert \, \left( \kappa(a) \right)^{-4}  \, \right]^{\frac{1}{2}} \\ &=& \, \left\Vert f \right\Vert_{\mathbb{H}^{-\frac{1}{2}}(\partial \Omega)} \, \left[ \left\vert D_{m_{0}} \right\vert \, M \, \left( \kappa(a) \right)^{-4}  \, \right]^{\frac{1}{2}} \, \overset{(\ref{distto})}{=} \, \mathcal{O}\left(\left\Vert f \right\Vert_{\mathbb{H}^{-\frac{1}{2}}(\partial \Omega)} \, a^{\frac{(2+7h)}{6}} \right).
\end{eqnarray}
Similarly, using $(\ref{pf=Slf})$, the vector function $\nabla p^{f}(\cdot)$ can be expressed as
\begin{equation}\label{ETVKT}
\nabla p^{f}(x) \, = \, \underset{x}{\nabla} \int_{\partial \Omega} G(x,y) \, f(y) \, d\sigma(y) \, = \, \langle \nabla G(x,\cdot) ; f \rangle_{\mathbb{H}^{\frac{1}{2}}(\partial \Omega) \times \mathbb{H}^{-\frac{1}{2}}(\partial \Omega)}, \quad x \in \Omega, 
\end{equation}
Then, by repeating the same computations as $(\ref{InPa})-(\ref{ZMEqua0445})$, we obtain
\begin{equation}\label{HS-Add-proof-1}
\left\Vert \nabla p^{f} \,  \right\Vert_{\mathbb{L}^{2}(D)} \; \lesssim \; \left\Vert  f \right\Vert_{\mathbb{H}^{-\frac{1}{2}}(\partial \Omega)} \; \left[ \int_{D} \int_{\Omega^{\diamond}} \frac{1}{\left\vert x - y \right\vert^{6}} \, dy \, dx  \right]^{\frac{1}{2}} \, = \, \mathcal{O}\left( a^{\frac{3 \, h}{2}} \; \left\Vert  f \right\Vert_{\mathbb{H}^{-\frac{1}{2}}(\partial \Omega)} \right). 
\end{equation}
Hence, by returning to $(\ref{EstJ1})$, using $(\ref{SH-Add-proof-1})$ and $(\ref{HS-Add-proof-1})$,  
\begin{equation}\label{ECEstJ1}
J_{1} \, =  \, \mathcal{O}\left( a^{\frac{3h}{2}-1} \; \left\Vert f \right\Vert_{\mathbb{H}^{-\frac{1}{2}}(\partial \Omega)} \; \left\Vert v^{g} \right\Vert_{\mathbb{L}^{2}(D)} \right).
\end{equation}
The following lemma gives us an a priori estimate satisfied by $v^{g}(\cdot)$. 
\begin{lemma}\label{ADZ-Lemma} 
We have the following a priori estimate 
\begin{equation}\label{AprioriEstvf}
\left\Vert v^{g} \right\Vert_{\mathbb{L}^{2}(D)}   \lesssim  a^{\frac{(5-2h)}{6}} \; \left\Vert g \right\Vert_{\mathbb{H}^{-\frac{1}{2}}(\partial \Omega)}. 
\end{equation}
\end{lemma}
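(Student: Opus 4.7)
The plan is to reduce the $L^2(D)$ estimate on $v^g$ to a Foldy-Lax-type algebraic system for the droplet-averages $\int_{D_j} v^g$, whose invertibility is the subject of Section~\ref{SectionInjective}. The starting point is the Lippmann-Schwinger reformulation of $(\ref{Equavf})$ obtained from Green's identity against the Neumann Green's function $\phi$ of $-\Delta$ on $\Omega$ from $(\ref{phi-harmonic})$:
\begin{equation*}
v^g(x) - \omega^{2}\frac{\rho_{1}}{k_{1}}\int_{D}\phi(x,y)\,v^g(y)\,dy = \omega^{2}\int_{\Omega}\phi(x,y)\,n^{2}(y)\,v^g(y)\,dy + \int_{\partial\Omega}\phi(x,y)\,g(y)\,d\sigma(y).
\end{equation*}
Its restriction to a generic droplet $D_j$ reads $(I-\omega^{2}\frac{\rho_{1}}{k_{1}}N_{D_j}^{\phi})\,v^g|_{D_j}=F_j$, with $F_j$ collecting the boundary-source term, the $n^{2}$-volume contribution, and the cross-interactions with the remaining droplets.

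I would then diagonalize the principal part using the orthonormal eigenbasis $\{e_n^{D_j}\}$ of the self-adjoint compact operator $N_{D_j}^{\phi}$ on $L^{2}(D_j)$, whose eigenvalues scale as $\lambda_n^{D_j}=a^{2}\lambda_n^{B}$. The frequency calibration $(\ref{cn0})$ is designed so that
\begin{equation*}
1-\omega^{2}\tfrac{\rho_{1}}{k_{1}}\,a^{2}\lambda_{n_0}^{B}=\mathcal{O}(a^{h}),\qquad \bigl|\,1-\omega^{2}\tfrac{\rho_{1}}{k_{1}}\,a^{2}\lambda_{n}^{B}\,\bigr|\gtrsim 1\ \text{for}\ n\neq n_0,
\end{equation*}
so only the mode $e_{n_0}^{D_j}$ is amplified and $v^g|_{D_j}$ is determined, up to lower-order contributions, by its projection onto $e_{n_0}^{D_j}$; equivalently by the scaled average $Y_j:=\int_{D_j}v^g_j(x)\,dx$. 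A Taylor expansion of $\phi$ across the small droplets, exploiting its $\mathbb{L}^{3-\delta}$-integrability in the spirit of Lemma~\ref{LemmaG=phi+Remainder}, then recasts the restricted Lippmann-Schwinger equation as a discrete algebraic system of the form $a^{h}\,c\,Y_j+\sum_{i\neq j}\phi(z_j,z_i)\,Y_i = S_j+R_j$, with $|S_j|\lesssim \|g\|_{\mathbb{H}^{-\frac{1}{2}}(\partial\Omega)}$ coming from the background source evaluated at $z_j$ and $R_j$ collecting the non-resonant amplitudes together with the Taylor remainders.

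Granted the uniform-in-$M$ invertibility of this system, which is provided by Section~\ref{SectionInjective}, I would obtain weights $(w_j)_j$ with $\sum_j w_j^{2}\lesssim 1$ and $|Y_j|\lesssim \|g\|_{\mathbb{H}^{-\frac{1}{2}}(\partial\Omega)}\,w_j$. Reconstructing $v^g|_{D_j}$ from $Y_j$ via the $a^{-h}$ resonant amplification, and using that $\|e_{n_0}^{D_j}\|_{L^{1}(D_j)}\sim a^{3/2}$ while $\|e_{n_0}^{D_j}\|_{L^{2}(D_j)}=1$, gives $\|v^g|_{D_j}\|_{L^{2}(D_j)}^{2}\lesssim a^{3-2h}\,w_j^{2}\,\|g\|_{\mathbb{H}^{-\frac{1}{2}}(\partial\Omega)}^{2}$; summing over $j=1,\dots,M$ and taking the square root yields the stated bound $\|v^g\|_{L^{2}(D)}\lesssim a^{\frac{3}{2}-h}\|g\|_{\mathbb{H}^{-\frac{1}{2}}(\partial\Omega)}$.

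The main obstacle is the third step, the uniform-in-$M$ invertibility and the sharp bound on $\max_j|Y_j|$. This is exactly what forces the sign condition $c_{n_0}<0$ from $(\ref{c-n-0})$: it renders the off-diagonal interaction coefficients $\phi(z_j,z_i)$ positive and identifies the discrete system as a Riemann-sum discretization of the coercive continuous Lippmann-Schwinger equation attached to the operator $-\Delta+P^{2}$ with $P$ as in $(\ref{DefP2cn0})$. Transferring the coercivity of that continuous problem back to the algebraic system, via the minimum inter-droplet separation $d\sim a^{(1-h)/3}$, is the technical heart of the argument and is entirely handled in Section~\ref{SectionInjective}; once that is granted, the remainder of the proof of $(\ref{AprioriEstvf})$ reduces to bookkeeping of the multiple-scattering remainders.
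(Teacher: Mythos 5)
Your proposal takes a genuinely different route from the paper, and it does not close. The paper proves this lemma \emph{directly}, without invoking the Foldy--Lax algebraic system: it writes the restricted Lippmann--Schwinger equation $(I-\omega^{2}\frac{\rho_{1}}{k_{1}}N_{D_m}^{\phi})v^{g}_{m}=S_m+(\text{cross terms})+(\text{$\mathcal R$-remainder})$, expands in the orthonormal eigenbasis of $N_{D_m}^{\phi}$, bounds the resolvent eigenvalue by $k_1/|k_1-\omega^2\rho_1\lambda_n|$, which by $(\ref{closeres})$ is $\mathcal{O}(a^{-h})$ at $n=n_0$ and $\mathcal{O}(1)$ otherwise, and then absorbs the multi-droplet cross terms by a bootstrap using $\sum_{j\neq m}|z_m-z_j|^{-2}\lesssim d^{-3}$ from $(\ref{SID})$. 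This yields $\|v^g\|_{\mathbb{L}^2(D)}\lesssim a^{-h}\|S\|_{\mathbb{L}^2(D)}$ and the single-layer bound $\|S\|_{\mathbb{L}^2(D)}\lesssim a^{3/2}\|g\|_{\mathbb{H}^{-1/2}(\partial\Omega)}$ finishes it.

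Your route via the algebraic system of Section~\ref{SectionInjective} has a circularity problem and loses a factor $M^{1/2}$. The remainder $Rest_m$ in $(\ref{DefRestm})$ is not expressible in the averages $Y_j$ alone: it contains $\int_{D_m}W_m\int_{D_m}\mathcal{R}(x,y)v^g_m(y)\,dy\,dx$ and $\sum_{j\neq m}\int_{D_m}W_m\int_{D_j}\int_0^1(y-z_j)\cdot\nabla G(\cdots)\,v^g_j(y)\,dy\,dx$, which involve $v^g$ pointwise. Closing the algebraic system as a bound on $Y_j$ therefore already requires an a priori bound on $v^g$ --- the very statement you are proving; the paper keeps the logical order the other way round for this reason. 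Moreover your claim $|Y_j|\lesssim w_j\|g\|$ with $\sum_j w_j^2\lesssim 1$ is not what the system delivers: the $\ell^2$ estimate at the end of Section~\ref{SectionInjective} gives $(\sum_j|Y_j|^2)^{1/2}\lesssim P^{-2}(\sum_j|S(z_j)|^2)^{1/2}$, and since $S\in\mathbb{H}^1(\Omega)$ with $M\sim a^{h-1}$ evaluation points one has $\sum_j|S(z_j)|^2\sim M\|g\|^2$ (by the same mean-value argument used for $p^f$ in Section~4.4). So $\sum w_j^2\sim M$, not $\lesssim 1$, and after converting $Y_j$ back to $\|v^g_j\|_{\mathbb{L}^2}$ via the resonant amplification and $\langle 1,e_{n_0}^{D_j}\rangle\sim a^{3/2}$ you obtain only $\|v^g\|_{\mathbb{L}^2(D)}\lesssim a^{1-h/2}\|g\|$, which for $h<1$ is strictly weaker than the stated $a^{3/2-h}\|g\|$. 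The loss is precisely $M^{1/2}$, reflecting that the direct argument exploits the smallness of the set $D$ (i.e.\ $\|S\|_{\mathbb{L}^2(D)}\lesssim a^{3/2}\|g\|$) while the discrete system works with pointwise data $S(z_j)$ which carry no such gain.
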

\begin{proof}
See \textbf{Subsection \ref{Proof-A-Priori-Estimate}}.
\end{proof}
Thanks to the previous lemma, the estimation of $J_{1}$ given by $(\ref{ECEstJ1})$, we reduce the estimation of $\bm{I}_{1}$, given by $(\ref{I1=Dp+J1})$, to: 
\begin{equation}\label{I1-formula-Intro}
\bm{I}_{1}  =  \omega^{2} \, \dfrac{\rho_{1}}{k_{1}} \, \sum_{j=1}^{M} \, p^{f}(z_{j}) \, \int_{D_{j}}   v^{g}_{j}(x) \,  dx + \mathcal{O}\left( a^{\frac{(5h-2)}{3}} \; \left\Vert f \right\Vert_{\mathbb{H}^{-\frac{1}{2}}(\partial \Omega)} \; \left\Vert g \right\Vert_{\mathbb{H}^{-\frac{1}{2}}(\partial \Omega)} \right). 
\end{equation}
The goal of the following subsection is to derive the algebraic system satisfied by the vector \\ $ \left( \int_{D_{j}}   v^{g}_{j}(x) \,  dx \right)_{j=1,\cdots,M}$ and justify its invertibility.
\subsection{Algebraic system}
We start with the following L.S.E, with $v^{g}(\cdot)$ as the solution of $(\ref{Equavf})$,
\begin{equation}\label{LZEqua1157}
    v^{g}(x) \, - \, \omega^{2} \, \int_{D} G(x,y) \, v^{g}(y) \, \left( \frac{\rho_{1}}{k_{1}} \, - \, n^{2}(y) \right) \, dy \, = \, S(x), \quad x \in \Omega, 
\end{equation}
where $S(\cdot)$ is solution of 
\begin{align}\label{EquaSg}
\begin{cases}  
\Delta S + \omega^{2} \, n^{2}(\cdot)  \, S = 0 \quad \text{in} \quad  \Omega,  \\ 
\qquad \qquad \quad \,  \partial_{\nu} S = g \;\;\;  \text{on} \quad \partial \Omega. 
\end{cases}
\end{align}
and $G(\cdot,\cdot)$ is the Green's kernel solution of $(\ref{EquaGKernel})$.
Now, by restricting $(\ref{LZEqua1157})$ into $D$, we obtain 
\begin{equation}\label{L.S.E.vf}
v^{g}(x) - \omega^{2}  \, \int_{D} G(x,y) \, v^{g}(y) \, \left(\frac{\rho_{1}}{k_{1}} \, - \, n^{2}(y) \right) \, dy \, = \, S(x), \quad x \in D.
\end{equation}
The coming lemma, on the decomposition of the Green's kernel $G(\cdot;\cdot)$, is useful for the next step.
\begin{lemma}\label{LemmaG=phi+Remainder}
The Green's kernel $G(\cdot;\cdot)$, solution of $(\ref{EquaGKernel})$, admits the following decomposition: 
\begin{equation}\label{G=phi+Remainder}
G(x,y) \, = \, \Phi_{0}(x,y) \, + \, \mathcal{R}(x,y), \quad x \neq y,
\end{equation} 
where $\Phi_{0}(\cdot,\cdot)$ is given by $(\ref{DefPhi0Equa1100})$, and the remainder term  $\mathcal{R}(\cdot,\cdot)$ satisfies 
\begin{align}\label{RemainderPDE}
\begin{cases}  
\underset{x}{\Delta}\left( \mathcal{R}(x,y) \right) \, + \, \omega^{2} \, n^{2}(x)  \, \mathcal{R}(x,y) \, = \, - \, \omega^{2} \, n^{2}(x)  \, \Phi_{0}(x,y) \quad \text{in} \quad  \Omega,  \\ 
\qquad \qquad \qquad \quad \quad  \partial_{\nu_{x}} \left( \mathcal{R}(x,y) \right)  = \, - \, \partial_{\nu_{x}} \left( \Phi_{0}(x,y) \right) \;\;\;  \text{on} \quad \partial \Omega.
\end{cases}
\end{align}  
For $x, y \in \Omega$, the term  $\mathcal{R}(\cdot,\cdot)$ is such that\footnote{In general, we can prove that 
\begin{equation*}
    \left\vert \mathcal{R}(x,y) \right\vert \,   \lesssim  \, \left( \frac{1}{\dist(x,\partial \Omega)} \right)^{\mathfrak{q}} \, \left(   \frac{1}{\dist(y,\partial \Omega)} \right)^{\mathfrak{p}}, \quad x, y \in \Omega, 
\end{equation*}
where $\mathfrak{p}$ and $\mathfrak{q}$ are positive real numbers such that $\mathfrak{p} + \mathfrak{q} \, = \, 1$.} 
\begin{equation}\label{FphOOEqua0645}
    \left\vert \mathcal{R}(x,y) \right\vert \,   \lesssim  \, \left( \frac{1}{\dist(x,\partial \Omega)} \right)^{\frac{1}{3}} \, \left(   \frac{1}{\dist(y,\partial \Omega)} \right)^{\frac{2}{3}}.
\end{equation}
\end{lemma}
\begin{proof}
See \textbf{Subsection \ref{SubsectionProofLemma2.3}}.
\end{proof}
For $x \in D_{m}$, we rewrite $(\ref{L.S.E.vf})$ as 
\begin{eqnarray}\label{Equa1432}
\nonumber
\left( I - \omega^{2} \, \dfrac{\rho_{1}}{k_{1}} \, N_{D_{m}} \right)(v^{g}_{m})(x) &-& \omega^{2} \, \dfrac{\rho_{1}}{k_{1}} \, \sum_{j \neq m}^{M} \int_{D_{j}} G(x,y) \, v^{g}_{j}(y) \, dy \\  &=& S_{m}(x) + \omega^{2} \, \dfrac{\rho_{1}}{k_{1}} \, \int_{D_{m}} \mathcal{R}(x,y) \, v^{g}_{m}(y) \, dy \, -  \omega^{2} \, \int_{D} G(x,y) \, v^{g}(y) \, n^{2}(y) \, dy, 
\end{eqnarray}
where $S_{m}(\cdot) := S(\cdot)_{|_{D_{m}}}$, $\mathcal{R}(\cdot,\cdot)$ is solution of $(\ref{RemainderPDE})$ and $N_{D_{m}}(\cdot)$ is the Newtonian operator defined, from $\mathbb{L}^{2}(D_{m})$ to $\mathbb{L}^{2}(D_{m})$, by $(\ref{DefNPO})$. In both sides of $(\ref{Equa1432})$, successively, we multiply by $\dfrac{k_{1}}{\omega^{2} \, \rho_{1}}$, we take the inverse operator of $\left( \dfrac{k_{1}}{\omega^{2} \, \rho_{1}} \, I -  N_{D_{m}} \right)$ and integrate over $D_{m}$, the obtained equation, to get
\begin{eqnarray}\label{Equa1451}
\nonumber
\beta_{m} \, \int_{D_{m}} v_{m}^{g}(x) \; dx  &-& \sum_{j = 1 \atop j \neq m}^{M} \int_{D_{m}} W_{m}(x) \, \int_{D_{j}} G(x;y) \;  v_{j}^{g}(y) \; dy \; dx = \dfrac{k_{1}}{\omega^{2} \, \rho_{1}} \; \int_{D_{m}} W_{m}(x) \, S_{m}(x) \, dx \\ \nonumber
&+& \int_{D_{m}} W_{m}(x) \, \int_{D_{m}} \int_{0}^{1} \underset{y}{\nabla} \mathcal{R}(x,z_{m}+t(y-z_{m})) \cdot (y-z_{m}) \, dt \, v^{g}_{m}(y) \, dy \, dx \\
&-&  \frac{k_{1}}{\rho_{1}} \, \int_{D_{m}} W_{m}(x) \int_{D} G(x,y) \, v^{g}(y) \, n^{2}(y) \, dy \, dx,
\end{eqnarray}
where $W_{m}(\cdot)$ is solution of
\begin{equation}\label{EquaWm3D}
\dfrac{k_{1}}{\omega^{2} \, \rho_{1}} \; W_{m}(x) -  N_{D_{m}}\left( W_{m} \right)(x) \, = \, 1, \quad x \in D_{m},
\end{equation} 
and $\beta_{m} \in \mathbb{C}$ is the constant given by 
\begin{equation}\label{Def-Beta_m-AS}
\beta_{m} \, := \, \left(1 \, - \, \int_{D_{m}} W_{m}(x) \, \mathcal{R}(x,z_{m})\, dx  \right).
\end{equation}
We have,  
\begin{equation}\label{Est-Int-Wm-Rm}
\left\vert \int_{D_{m}} W_{m}(x) \, \mathcal{R}(x,z_{m})\, dx \right\vert \,  \leq  \, \left\Vert W_{m} \right\Vert_{\mathbb{L}^{2}(D_{m})} \, \left\Vert \mathcal{R}(\cdot,z_{m}) \right\Vert_{\mathbb{L}^{2}(D_{m})}  \underset{(\ref{Equa1020})}{\overset{(\ref{FphOOEqua0645})}{\lesssim}}  \, \frac{a^{(1-h)}}{\dist\left(D_{m};\partial \Omega \right)} \, \overset{(\ref{distto})}{=} \mathcal{O}\left( a^{\frac{2 \, (1-h)}{3}} \right).   
\end{equation}
Hence, 
\begin{equation}\label{DTBeta_m}
\beta_{m} \, = \, 1 \, + \, \mathcal{O}\left( a^{\frac{2 \, (1-h)}{3}} \right), \quad  \text{for} \quad m=1,\cdots,M.
\end{equation}
Next, to derive the desired algebraic system, we expand in the equation $(\ref{Equa1451})$ the Green's kernel $G(\cdot,\cdot)$ and the source term $S(\cdot)$, near the centers, to obtain:
\begin{equation}\label{ASEqua1510}
\beta_{m} \, \int_{D_{m}} v_{m}^{g}(x) \; dx \; -  \; \alpha_{m} \; \sum_{j = 1 \atop j \neq m}^{M} G(z_{m};z_{j}) \; \int_{D_{j}} v_{j}^{g}(x) \; dx = \dfrac{k_{1}}{\omega^{2} \, \rho_{1}} \; \alpha_{m} \; S(z_{m}) + Rest_{m},
\end{equation}
where $\alpha_{m}$ is the scattering coefficient given by 
\begin{equation}\label{Defalpha}
\alpha_{m} := \int_{D_{m}} W_{m}(x) \, dx,
\end{equation}
and 
\begin{eqnarray}\label{DefRestm}
\nonumber
Rest_{m} &:=&  \sum_{j = 1 \atop j \neq m}^{M} \int_{D_{m}} W_{m}(x)  \int_{0}^{1}   \nabla G(z_{m}+t(x-z_{m});z_{j}) \cdot (x-z_{m}) \; dt \; dx \, \int_{D_{j}} \, v_{j}^{g}(y) \, dy  \\ \nonumber 
&+&  \sum_{j = 1 \atop j \neq m}^{M} \int_{D_{m}} W_{m}(x) \int_{D_{j}} \int_{0}^{1}   \nabla G(x;z_{j}+t(y-z_{j})) \cdot (y-z_{j}) \; dt \, v_{j}^{g}(y) \, dy \; dx \\ \nonumber 
&+& \dfrac{k_{1}}{\omega^{2} \, \rho_{1}} \; \int_{D_{m}} W_{m}(x) \, \int_{0}^{1}   \nabla S_{m}(z_{m}+t(x-z_{m})) \cdot (x-z_{m}) dt \, dx  \\ \nonumber
&+&  \int_{D_{m}} W_{m}(x) \, \int_{D_{m}} \int_{0}^{1} \underset{y}{\nabla} \mathcal{R}(x,z_{m}+t(y-z_{m})) \cdot (y-z_{m}) \, dt \, v^{g}_{m}(y) \, dy \, dx \\
&-& \frac{k_{1}}{\rho_{1}} \, \int_{D_{m}} W_{m}(x) \int_{D} G(x,y) \, v^{g}(y) \, n^{2}(y) \, dy \, dx.
\end{eqnarray}
As $D_j$'s are translations and scales of the same domain $B$, i.e. $D_j=z_j +a B$ and $\rho_j=\rho_i$ with $k_j=k_i$ for $i, j:1, ..., M,$ we deduce that $\alpha_{m} = \alpha$, for $m=1,\cdots,M$. In addition, by multiplying its both sides by $\dfrac{\omega^{2} \, \rho_{1}}{k_{1} \, \alpha}$ and, then, setting \footnote{In the equation $\alpha \, = \, - P^{2} \, a^{1-h}$, the term $a^{1-h}$ comes from the estimation of $ \alpha $, see \textbf{Lemma \ref{EstimationRealphaImalpha}}.} 
\begin{equation}\label{DefYj}
Y_{m} := \dfrac{\beta_{m} \, \omega^{2} \, \rho_{1}}{\alpha \; k_{1}}  \; \int_{D_{m}} v_{m}^{g}(x) \; dx
\end{equation}
with $\alpha = \, - \, P^{2} \, a^{1-h}$ we obtain 
\begin{equation}\label{0820}
Y_{m} \, +  \, \sum_{j=1 \atop j \neq m}^{M} G(z_{m};z_{j}) \; P^{2} \, a^{1-h} \, \frac{1}{\beta_{j}} \, Y_{j} \, = \, S(z_{m})   + \dfrac{\omega^{2} \, \rho_{1}}{k_{1}}  \;  \frac{Rest_{m}}{\alpha}.
\end{equation}
The next lemma ensures the invertibility of the previous algebraic system. 
\begin{lemma}\label{MTR}
The algebraic system  $(\ref{0820})$ is invertible from $\ell_{2}$ to itself. In addition, the following estimation holds
\begin{equation*}
    \left( \sum_{m=1}^{M} \left\vert Y_{m} \right\vert^{2} \right)^{\frac{1}{2}} \; \lesssim  \; \left( \sum_{m=1}^{M} \left\vert S(z_{m}) \right\vert^{2} \right)^{\frac{1}{2}} \; + \; a^{(h-3)} \, \left( \sum_{m=1}^{M} \left\vert Rest_{m} \right\vert^{2} \right)^{\frac{1}{2}}.
\end{equation*}
In particular,
\begin{equation}\label{Est-ell2-Ym-g}
    \left( \sum_{m=1}^{M} \left\vert Y_{m} \right\vert^{2} \right)^{\frac{1}{2}} \; \lesssim  \; a^{\frac{2(h-1)}{3}} \; \left\Vert g \right\Vert_{\mathbb{H}^{-\frac{1}{2}}\left( \partial \Omega \right)}.
\end{equation}
\end{lemma}
\begin{proof}
See \textbf{Subsection \ref{SectionInjective}}.    
\end{proof}

\subsection{The L.S.E satisfied by \textbf{$u^{g}(\cdot)$}}\label{SubSection43} 
We define $\left(\tilde{Y}_{1}, \cdots, \tilde{Y}_{M} \right)$ as the solution of the unperturbed algebraic system related to $(\ref{0820})$. More precisely,
\begin{equation}\label{1924}
\tilde{Y}_{m} + \sum_{j = 1 \atop j \neq m}^{M} G(z_{m};z_{j}) \; P^{2} \, a^{1-h} \, \tilde{Y}_{j} \, \frac{1}{\beta_{j}}  =   S(z_{m}).
\end{equation}
We set the following L.S.E, 
\begin{equation}\label{NewL.S.E}
Y(z) \, + \, P^{2} \,  \int_{\Omega} G(z;y) \, Y(y) \, dy =  S(z), \quad z \in \Omega, 
\end{equation}
where $G(\cdot,\cdot)$ is solution of $(\ref{EquaGKernel})$ and $S(\cdot)$ is solution of $(\ref{EquaSg})$. We need the following lemma. 
\begin{lemma}\label{ExistenceYEstimation}
There exists one and only one solution $Y(\cdot)$ of the L.S.E $(\ref{NewL.S.E})$, and it satisfies the estimates
\begin{equation}\label{Lemma6.4YS}
\left\Vert Y \right\Vert_{\mathbb{H}^{1}(\Omega)} \, \lesssim  \, P^{2} \, \left\Vert S \right\Vert_{\mathbb{H}^{1}(\Omega)}. 
\end{equation}
\end{lemma}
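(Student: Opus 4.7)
The plan is to reduce the Lippmann--Schwinger equation $(\ref{NewL.S.E})$ to an equivalent Neumann boundary value problem by applying the differential operator that inverts the Newtonian operator with kernel $G(\cdot,\cdot)$, and then invoke standard elliptic theory to obtain both the existence/uniqueness and the $\mathbb{H}^{1}$ norm estimate.

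First, I would use the defining property $(\ref{EquaGKernel})$ of the kernel $G$ to observe that the Newtonian operator $N_{G}: Y \mapsto \int_{\Omega} G(\cdot,y)\,Y(y)\,dy$ acts as the inverse of $-(\Delta + \omega^{2} n^{2})$ under the zero Neumann boundary condition; that is, $(\Delta + \omega^{2} n^{2})\,N_{G}(Y) = - Y$ in $\Omega$ and $\partial_{\nu} N_{G}(Y) = 0$ on $\partial \Omega$. Applying $(\Delta + \omega^{2} n^{2})$ to both sides of $(\ref{NewL.S.E})$, and using that $S(\cdot)$ satisfies $(\Delta + \omega^{2} n^{2})\,S = 0$ by $(\ref{EquaSg})$, converts the integral equation into the PDE
\begin{equation*}
\left( \Delta + \omega^{2}\,n^{2} + \overline{\alpha} \right)\,Y \;=\; 0 \quad \text{in} \quad \Omega,
\end{equation*}
while taking the normal trace of $(\ref{NewL.S.E})$ shows $\partial_{\nu} Y = \partial_{\nu} S = g$ on $\partial \Omega$. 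So the Lippmann--Schwinger equation is equivalent to a Neumann boundary value problem with zero-th order coefficient $\omega^{2} n^{2} + \overline{\alpha}$.

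Next, I would invoke the resonant scaling encoded in $(\ref{cn0})$--$(\ref{c-n-0})$: the computation that produced $\alpha = \overline{\alpha}\,a^{1-h}$ is designed precisely so that, to leading order, $\overline{\alpha}$ equals $-P^{2}$, where $P^{2}$ is the effective coefficient defined in $(\ref{DefP2cn0})$. By the standing choice in Remark 1.4, $P^{2} > \omega_{0}^{2}\,\left\Vert n^{2} \right\Vert_{\mathbb{L}^{\infty}(\Omega)}$, and because $\omega^{2} \to \omega_{0}^{2}$ as $a \to 0$, the coefficient $\omega^{2} n^{2} + \overline{\alpha}$ is strictly negative and bounded away from zero uniformly on $\Omega$. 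Consequently the bilinear form $a(u,v) := \int_{\Omega} \nabla u \cdot \overline{\nabla v}\,dx - \int_{\Omega} \left( \omega^{2} n^{2} + \overline{\alpha} \right) u\,\overline{v}\,dx$ is continuous and coercive on $\mathbb{H}^{1}(\Omega)$. Lax--Milgram then yields a unique $Y \in \mathbb{H}^{1}(\Omega)$ solving the above Neumann BVP, together with $\left\Vert Y \right\Vert_{\mathbb{H}^{1}(\Omega)} \lesssim \left\Vert g \right\Vert_{\mathbb{H}^{-\frac{1}{2}}(\partial \Omega)}$.

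Finally, to bring $\left\Vert S \right\Vert_{\mathbb{H}^{1}(\Omega)}$ into the estimate, I would use that $S \in \mathbb{H}^{1}(\Omega)$ satisfies $\Delta S = - \omega^{2} n^{2}\,S \in \mathbb{L}^{2}(\Omega)$, so the trace theorem for functions with $\mathbb{L}^{2}$-Laplacian gives $\left\Vert g \right\Vert_{\mathbb{H}^{-\frac{1}{2}}(\partial \Omega)} = \left\Vert \partial_{\nu} S \right\Vert_{\mathbb{H}^{-\frac{1}{2}}(\partial \Omega)} \lesssim \left\Vert S \right\Vert_{\mathbb{H}^{1}(\Omega)} + \left\Vert \Delta S \right\Vert_{\mathbb{L}^{2}(\Omega)} \lesssim \left\Vert S \right\Vert_{\mathbb{H}^{1}(\Omega)}$; chaining with the previous estimate gives $(\ref{Lemma6.4YS})$. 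The main delicate point is the coercivity step: it is not generic for an integral equation of this type, but is built into the present construction through the requirement $c_{n_{0}} < 0$ and the lower bound on $P^{2}$ in Remark 1.4. Without this baked-in negativity, one would only obtain solvability by the Fredholm alternative (conditional on $-\overline{\alpha}$ avoiding the discrete Neumann spectrum of $-\Delta - \omega^{2} n^{2}$), and the stability constant could blow up near such resonances.
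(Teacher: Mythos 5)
Your proof is correct, but it takes a genuinely different route from the paper's. The paper argues entirely at the level of the integral equation: it asserts $\mathbb{L}^{2}(\Omega)$-invertibility of $(\ref{NewL.S.E})$ (the coercivity argument that underlies this is the one carried out for the equivalent integral equation in Subsection \ref{ICIE}, where the positivity of the Newtonian operator and the sign of $P^{2}-\omega^{2}n^{2}$ are used with Lax--Milgram), obtains $\left\Vert Y \right\Vert_{\mathbb{L}^{2}} \lesssim \left\Vert S \right\Vert_{\mathbb{L}^{2}}$, and then bootstraps to $\mathbb{H}^{1}$ directly from the identity $Y = S + \overline{\alpha}\,N(Y)$ using the $\mathbb{L}^{2}(\Omega)\to\mathbb{H}^{1}(\Omega)$ continuity of the Newtonian operator. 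You instead apply $(\Delta+\omega^{2}n^{2})$ to unfold the L.S.E into the Neumann problem $(\Delta+\omega^{2}n^{2}+\overline{\alpha})Y=0$, $\partial_{\nu}Y=g$, prove coercivity in $\mathbb{H}^{1}(\Omega)$ directly from the sign condition $\omega^{2}n^{2}+\overline{\alpha}<0$ that is baked into the choice of $c_{n_{0}}$ and $P^{2}$, and then convert $\left\Vert g \right\Vert_{\mathbb{H}^{-\frac{1}{2}}(\partial\Omega)}$ back to $\left\Vert S\right\Vert_{\mathbb{H}^{1}(\Omega)}$ using the conormal-trace estimate for functions with $\mathbb{L}^{2}$-Laplacian. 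Both hinge on the same underlying mechanism (the resonant scaling makes the effective zeroth-order coefficient strictly negative), but your argument is more self-contained and makes the coercivity visible at the PDE level, whereas the paper's is shorter because it leans on the $\mathbb{L}^{2}$-invertibility established separately and only needs the mapping property of the Newtonian operator for the upgrade to $\mathbb{H}^{1}$. The one thing worth noting is that your approach routes through $g$, which is a small detour compared to the paper's direct bound in terms of $S$, but both land on the same estimate $(\ref{Lemma6.4YS})$.
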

\begin{proof}
The equation $(\ref{NewL.S.E})$ is invertible from $\mathbb{L}^{2}(\Omega)$ to $\mathbb{L}^{2}(\Omega)$\ and this gives us the estimation 
\begin{equation}\label{KS1}
 \left\Vert Y \right\Vert_{\mathbb{L}^{2}(\Omega)} \lesssim \left\Vert S \right\Vert_{\mathbb{L}^{2}(\Omega)} \le \left\Vert S \right\Vert_{\mathbb{H}^{1}(\Omega)}.   
\end{equation}
Now, by taking the $\mathbb{H}^{1}(\Omega)$-norm in both sides of $(\ref{NewL.S.E})$, we get: 
\begin{equation*}
    \left\Vert Y \right\Vert_{\mathbb{H}^{1}(\Omega)}  \lesssim \left\Vert S \right\Vert_{\mathbb{H}^{1}(\Omega)} \, +  \, P^{2} \, \left\Vert \mathscr{N}\left( Y \right) \right\Vert_{\mathbb{H}^{1}(\Omega)}, 
\end{equation*}
where $\mathscr{N}(\cdot)$ is the Newtonian operator defined by 
    \begin{equation}\label{DefNewG}
    \mathscr{N}(f)(x) \, := \,  \int_{\Omega} G(x,y) \, f(y) \, dy, \quad x \in \Omega. 
    \end{equation}
    Then, using the continuity of the Newtonian operator, from $\mathbb{L}^{2}(\Omega)$ to $\mathbb{H}^{1}(\Omega)$ , we obtain
\begin{equation*}
    \left\Vert Y \right\Vert_{\mathbb{H}^{1}(\Omega)}  \lesssim \left\Vert S \right\Vert_{\mathbb{H}^{1}(\Omega)} +  \, P^{2} \, \left\Vert Y \right\Vert_{\mathbb{L}^{2}(\Omega)} \overset{(\ref{KS1})}{\lesssim}  \, P^{2} \, \left\Vert S \right\Vert_{\mathbb{H}^{1}(\Omega)}. 
\end{equation*}
This ends the proof of \textbf{Lemma \ref{ExistenceYEstimation}}. 
\end{proof}
\begin{remark}
    The function $S(\cdot)$, solution of $(\ref{EquaSg})$, can be represented as a single layer potential with density function given by $g(\cdot)$, i.e.,  
    \begin{equation}\label{NMP1}
        S(x) \, = \,  \mathscr{S}(g)(x) \, := \, \int_{\partial \Omega} G(x,y) \, g(y) \, d\sigma(y), \quad x \in \Omega,
    \end{equation}
    with $G(\cdot,\cdot)$ is the Green's kernel solution of $(\ref{EquaGKernel})$.
    Then, from $(\ref{Lemma6.4YS})$, we obtain
\begin{equation*}
    \left\Vert Y \right\Vert_{\mathbb{H}^{1}(\Omega)} \lesssim \left\Vert \mathscr{S}(g) \right\Vert_{\mathbb{H}^{1}(\Omega)}, 
\end{equation*}
and using the continuity of the single layer operator, from $\mathbb{H}^{-\frac{1}{2}}(\partial \Omega)$ to $\mathbb{H}^{1}(\Omega)$, we end up with the following estimation: 
\begin{equation}\label{Yfctg}
    \left\Vert Y \right\Vert_{\mathbb{H}^{1}(\Omega)} \lesssim \left\Vert g \right\Vert_{\mathbb{H}^{-\frac{1}{2}}(\partial \Omega)}. 
\end{equation}
\end{remark}
\medskip
Based on the introduced notations in \textbf{Assumption \ref{AssDmDist}}, in particular $(\ref{Decoupage-Omega})$ and the fact that $\vert \Omega_j\vert=a^{1-h}$, for $1 \leq j \leq M$ and $0 \leq h < 1$, \footnote{We have $\vert \Omega_j\vert=a^{1-h}$ and $\vert \Omega^{\star}_j\vert\sim a^{1-h}$ but, as these $\Omega^{\star}_j$'s intersect $\partial \Omega$, we cannot necessarily replace $\sim$ with $=$.} we can rewrite $(\ref{NewL.S.E})$ as 
\begin{equation}\label{LSE=RHS+AI+AII}
Y(z_{m}) \, + \, P^{2}  \, \sum_{j = 1 \atop j \neq m}^{M} G(z_{m};z_{j})  \, a^{1-h}  \, \frac{1}{\beta_{j}} \, Y(z_{j}) =  S(z_{m}) \, - \, P^{2}  \, \bm{\tilde{I}}(z_{m}),
\end{equation}
where 
\begin{equation}\label{ASAB}
\bm{\tilde{I}}(z_{m}) \, := \,  \int_{\Omega} G(z_{m};y) \, Y(y) \, dy - \, \sum_{j = 1 \atop j \neq m}^{M} G(z_{m};z_{j})    \, Y(z_{j}) \, \, \frac{1}{\beta_{j}} 
 \, \left\vert \Omega_{j} \right\vert. 
\end{equation}
To estimate $\bm{\tilde{I}}(z_{m})$, we first consider the term 
\begin{equation}\label{Omega=Omega_Cube+Omega_r}
  \int_{\Omega} G(z_{m};y) \, Y(y) \, dy \, := \, \int_{\Omega_{cube}} G(z_{m};y) \, Y(y) \, dy+\int_{\Omega_r} G(z_{m};y) \, Y(y) \, dy.  
\end{equation}
and show that the second term is negligible.
In fact, for the domains $\Omega_{j}^{\star}$, which are not necessarily cubes, they have the property of non-empty intersection with $\partial \Omega$, i.e. $\partial \Omega_{n}^{\star} \cap \partial \Omega \neq \{ \emptyset \}$, for $1 \leq n \leq \aleph$. Since each $\Omega_{j}$ has volume equal to $a^{1-h}$, and then its maximum radius is of the order $a^{\frac{1}{3}(1-h)}$, then intersecting surfaces with $\partial \Omega$ have a volume of the order $a^{\frac{2}{3}(1-h)}$. As the volume of $\partial \Omega$ is of order one, we conclude that the number of such cubes will not exceed the order $a^{-\frac{2}{3}(1-h)}$. Hence, the volume of $\Omega_{r}$ will not exceed the order $a^{\frac{1}{3}(1-h)} \, \xrightarrow{a \rightarrow 0} \, 0$, i.e., 
\begin{equation}\label{VolOmegar}
    \left\vert \Omega_{r} \right\vert = \mathcal{O}\left( a^{\frac{1}{3}(1-h)} \right).
\end{equation}
Regarding the second term on the R.H.S of $(\ref{Omega=Omega_Cube+Omega_r})$, we have\footnote{We recall from \textbf{Lemma \ref{LemmaG=phi+Remainder}} that $G(z_{m};\cdot) \in \mathbb{L}^{3-\delta}(\Omega)$, with $z_{m}$ fixed, where $\delta$ is an arbitrarily and sufficiently small positive number.}
\begin{eqnarray*}
    \left\vert \int_{\Omega_r} G(z_{m};y) \, Y(y) \, dy \right\vert & \leq & \Vert Y \Vert_{\mathbb{L}^{2}(\Omega_{r})} \left\Vert G(z_{m};\cdot) \right\Vert_{\mathbb{L}^{2}(\Omega_{r})} \\ & \leq & \, \left\vert \Omega_{r} \right\vert^{\frac{1}{3}} \, \Vert Y \Vert_{\mathbb{L}^{6}(\Omega_{r})} \left\Vert G(z_{m};\cdot) \right\Vert_{\mathbb{L}^{2}(\Omega_{r})} \, \leq \, \left\vert \Omega_{r} \right\vert^{\frac{1}{3}} \, \Vert Y \Vert_{\mathbb{L}^{6}(\Omega)} \left\Vert G(z_{m};\cdot) \right\Vert_{\mathbb{L}^{2}(\Omega_{r})}.
\end{eqnarray*}
Thanks to \textbf{Lemma \ref{ExistenceYEstimation}}, we know that  $Y(\cdot) \in \mathbb{H}^{1}(\Omega) \subset \mathbb{L}^{6}(\Omega)$. Then, 
\begin{equation*}
        \left\vert \int_{\Omega_r} G(z_{m};y) \, Y(y) \, dy \right\vert   \lesssim  \left\vert \Omega_{r} \right\vert^{\frac{1}{3}} \,  \left\Vert Y \right\Vert_{\mathbb{H}^{1}(\Omega)} \, \left\Vert G(z_{m};\cdot) \right\Vert_{\mathbb{L}^{2}(\Omega_{r})} \, \overset{(\ref{Lemma6.4YS})}{\lesssim} \, P^{2} \, \left\vert \Omega_{r} \right\vert^{\frac{1}{3}} \,  \left\Vert S \right\Vert_{\mathbb{H}^{1}(\Omega)} \, \left\Vert G(z_{m};\cdot) \right\Vert_{\mathbb{L}^{2}(\Omega_{r})},
\end{equation*}
which, by using $(\ref{NMP1})$ and the continuity of the Single-Layer operator from $\mathbb{H}^{-\frac{1}{2}}(\partial \Omega)$ to $\mathbb{H}^{1}(\Omega)$, can be reduced to    
\begin{eqnarray*}
        \left\vert \int_{\Omega_r} G(z_{m};y) \, Y(y) \, dy \right\vert \, & \lesssim & \, P^{2} \, \left\vert \Omega_{r} \right\vert^{\frac{1}{3}} \,  \left\Vert g \right\Vert_{\mathbb{H}^{-\frac{1}{2}}(\partial \Omega)} \, \left\Vert G(z_{m};\cdot) \right\Vert_{\mathbb{L}^{2}(\Omega_{r})} \\ & \lesssim &  \, P^{2} \, \left\vert \Omega_{r} \right\vert^{\frac{(9-5\delta)}{6(3-\delta)}}   \, \left\Vert g \right\Vert_{\mathbb{H}^{-\frac{1}{2}}(\partial \Omega)}  \, \left\Vert G(z_{m};\cdot) \right\Vert_{\mathbb{L}^{3-\delta}(\Omega_{r})}.
\end{eqnarray*}
Next, using the fact that $G(z_{m};\cdot) \in \mathbb{L}^{3-\delta}(\Omega_{r})$, hence $\left\Vert G(z_{m};\cdot) \right\Vert_{\mathbb{L}^{3-\delta}(\Omega_{r})} \sim 1$, we reduce the previous estimation to
\begin{equation}\label{ASAB1258}
    \left\vert \int_{\Omega_r} G(z_{m};y) \, Y(y) \, dy \right\vert \, \lesssim \, P^{2} \, \left\vert \Omega_{r} \right\vert^{\frac{(9-5\delta)}{6(3-\delta)}}    \, \left\Vert g \right\Vert_{\mathbb{H}^{-\frac{1}{2}}(\partial \Omega)} \, \overset{(\ref{VolOmegar})}{=} \, \mathcal{O}\left(P^{2} \, \left\Vert g \right\Vert_{\mathbb{H}^{-\frac{1}{2}}(\partial \Omega)} \; a^{\frac{(1-h) \, (9 - 5 \delta)}{18(3-\delta)}} \right).
\end{equation}
Therefore, by gathering $(\ref{ASAB}),(\ref{Omega=Omega_Cube+Omega_r})$, and the estimation $(\ref{ASAB1258})$, we deduce
\begin{equation}\label{tildeI=I+err}
\bm{\tilde{I}}(z_{m}) \, = \, \bm{I}(z_{m}) \, + \, \mathcal{O}\left(P^{2} \,  \left\Vert g \right\Vert_{\mathbb{H}^{-\frac{1}{2}}(\partial \Omega)}  \; a^{\frac{(1-h) \, (9 - 5 \delta)}{18(3-\delta)}}  \right), \quad \text{as} \quad a \ll 1,    
\end{equation}
where 
\begin{equation*}
\bm{I}(z_{m}) \, := \, \sum_{\ell = 1}^{M} \int_{\Omega_{\ell}} G(z_{m};y) \, Y(y) \, dy - \sum_{j = 1 \atop j \neq m}^{M} G(z_{m};z_{j})  \, Y(z_{j}) \, \frac{1}{\beta_{j}}  \, \left\vert \Omega_{j} \right\vert.
\end{equation*}
Let us now estimate $\bm{I}(z_{m})$. We write
\begin{eqnarray*}
\bm{I}(z_{m}) \,
&=& \, \sum_{\ell = 1 \atop \ell \neq m}^{M} \int_{\Omega_{\ell}} \left[ G(z_{m};y)  Y(y) - G(z_{m};z_{\ell})  Y(z_{\ell}) \, \frac{1}{\beta_{\ell}}  \right] \, dy +  \int_{\Omega_{m}} G(z_{m};y) \,  Y(y) \, dy \\
&\overset{(\ref{Def-Beta_m-AS})}{=}& \, \sum_{\ell = 1 \atop \ell \neq m}^{M} \int_{\Omega_{\ell}} \left[ G(z_{m};y)  Y(y) - G(z_{m};z_{\ell})  Y(z_{\ell})  \right] \, dy +  \int_{\Omega_{m}} G(z_{m};y) \,  Y(y) \, dy \\
&-& \sum_{\ell = 1 \atop \ell \neq m}^{M}  G(z_{m};z_{\ell}) \, Y(z_{\ell}) \, \left\vert \Omega_{\ell} \right\vert \, \frac{\int_{D_{\ell}} W_{\ell}(x) \mathcal{R}(x,z_{\ell}) \, dx}{1 \, - \, \int_{D_{\ell}} W_{\ell}(x) \mathcal{R}(x,z_{\ell}) \, dx},
\end{eqnarray*}
and, by using Taylor expansion for the function $G(z_{m},\cdot) \, Y(\cdot)$ near the point $z_{\ell}$, we get: 
\begin{eqnarray}\label{EquaIzm}
\nonumber
\bm{I}(z_{m}) &=&  \sum_{\ell = 1 \atop \ell \neq m}^{M}   \int_{\Omega_{\ell}} \int_{0}^{1} G(z_{m};z_{\ell} + t (y-z_{\ell})) \nabla Y\left(z_{\ell} + t (y-z_{\ell}\right) \cdot (y-z_{\ell}) \, dt  \, dy \\  \nonumber
&+&\sum_{\ell = 1 \atop \ell \neq m}^{M}  \int_{\Omega_{\ell}} \int_{0}^{1} Y\left(z_{\ell} + t (y-z_{\ell}) \right) \nabla G(z_{m};z_{\ell} + t (y-z_{\ell})) \cdot (y-z_{\ell})  dt   dy + \int_{\Omega_{m}} G(z_{m};y)  Y(y) dy \\
&-& \sum_{\ell = 1 \atop \ell \neq m}^{M}  G(z_{m};z_{\ell}) \, Y(z_{\ell}) \, \left\vert \Omega_{\ell} \right\vert \, \frac{\int_{D_{\ell}} W_{\ell}(x) \mathcal{R}(x,z_{\ell}) \, dx}{1 \, - \, \int_{D_{\ell}} W_{\ell}(x) \mathcal{R}(x,z_{\ell}) \, dx}.
\end{eqnarray}
From \textbf{Lemma \ref{LemmaG=phi+Remainder}}, we know that $G(x,y) = \Phi_{0}(x,y) + \mathcal{R}(x,y)$, for $x \neq y$, where the dominant part $\Phi_{0}(\cdot,\cdot)$ is given by $(\ref{DefPhi0Equa1100})$. In the sequel, we keep only the dominant part of $G(\cdot,\cdot)$. More precisely, we have
\begin{equation}\label{SG}
\left\vert G(x,y) \right\vert \lesssim \frac{1}{\left\vert x - y \right\vert} \quad \text{and} \quad \left\vert \nabla G(x,y) \right\vert \lesssim \frac{1}{\left\vert x - y \right\vert^{2}}, \quad \text{for} \quad x \neq y.
\end{equation}
By taking the modulus in both sides of $(\ref{EquaIzm})$ and using $(\ref{SG})$, we deduce: 
\begin{eqnarray}\label{EquaIzm=...+Lzm}
\nonumber
\left\vert \bm{I}(z_{m}) \right\vert & \lesssim & a^{\frac{(1-h)}{3}}
\sum_{\ell = 1 \atop \ell \neq m}^{M}   \int_{\Omega_{\ell}} \int_{0}^{1} \frac{1}{ \left\vert z_{m} - z_{\ell} - t (y-z_{\ell}) \right\vert} \left\vert \nabla Y\left(z_{\ell} + t (y-z_{\ell}\right) \right\vert  dt  \, dy  \\ 
&+& a^{\frac{(1-h)}{3}} \sum_{\ell = 1 \atop \ell \neq m}^{M}  \int_{\Omega_{\ell}} \int_{0}^{1}  \frac{\left\vert Y\left(z_{\ell} + t (y-z_{\ell}) \right) \right\vert}{ \left\vert z_{m} - z_{\ell} - t (y-z_{\ell})\right\vert^{2}} \, dt  \, dy + \int_{\Omega_{m}} \left\vert G(z_{m};y) \, \right\vert \left\vert Y(y) \, \right\vert dy \, + \,  \left\vert \bm{L}(z_{m})\right\vert,
\end{eqnarray}
where $\bm{L}(z_{m})$ is the term given by
\begin{equation}\label{DefLzm}
    \bm{L}(z_{m}) \, :=  \, \sum_{\ell = 1 \atop \ell \neq m}^{M}  G(z_{m};z_{\ell}) \, Y(z_{\ell}) \, \left\vert \Omega_{\ell} \right\vert \, \frac{\int_{D_{\ell}} W_{\ell}(x) \mathcal{R}(x,z_{\ell}) \, dx}{1 \, - \, \int_{D_{\ell}} W_{\ell}(x) \mathcal{R}(x,z_{\ell}) \, dx}.
\end{equation}
 Next, we delay the estimation of $\bm{I}(z_{m})$ until we estimate first $\bm{L}(z_{m})$. To do this, by taking the absolute value on both sides of $(\ref{DefLzm})$, using the fact that $\left\vert \Omega_{\ell} \right\vert \, = \, a^{1-h}$, the estimation $(\ref{SG})$, with the estimation given by $(\ref{Est-Int-Wm-Rm})$, gives us 
 \begin{equation*}
         \left\vert \bm{L}(z_{m}) \right\vert \,  \lesssim  \, a^{\frac{5(1-h)}{3}} \, \sum_{\ell = 1 \atop \ell \neq m}^{M} \frac{1}{\left\vert z_{m} \, - \, z_{\ell} \right\vert}  \, \left\vert Y(z_{\ell}) \right\vert \, \leq \, a^{\frac{5(1-h)}{3}} \, \left( \sum_{\ell = 1 \atop \ell \neq m}^{M} \frac{1}{\left\vert z_{m} \, - \, z_{\ell} \right\vert^{2}} \right)^{\frac{1}{2}}  \, \left( \sum_{\ell = 1 \atop \ell \neq m}^{M} \left\vert Y(z_{\ell}) \right\vert^{2} \right)^{\frac{1}{2}}.
 \end{equation*}
 Besides, by applying a Cauchy-Schwartz inequality, we obtain 
\begin{eqnarray*}
 \left\vert \bm{L}(z_{m}) \right\vert \, & \leq & \, a^{\frac{5(1-h)}{3}} \, \left( \sum_{\ell = 1 \atop \ell \neq m}^{M} \frac{1}{\left\vert z_{m} \, - \, z_{\ell} \right\vert^{2}} \right)^{\frac{1}{2}} \, \left[  \left( \sum_{\ell = 1}^{M} \left\vert Y(z_{\ell}) - \tilde{Y}_{\ell} \right\vert^{2} \right)^{\frac{1}{2}} \, + \,  \left( \sum_{\ell = 1}^{M} \, \left\vert \tilde{Y}_{\ell} \right\vert^{2} \right)^{\frac{1}{2}}\right],
\end{eqnarray*}
where $\left(\tilde{Y}_{1}; \cdots ; \tilde{Y}_{M} \right)$ is solution of $(\ref{1924})$. In addition, by using $(\ref{Est-ell2-Ym-g})$, it can be reduced to 
\begin{equation}\label{EstLzmEqua0815}
    \left\vert \bm{L}(z_{m}) \right\vert \,  \lesssim  \,  \,  \left( \sum_{\ell = 1 \atop \ell \neq m}^{M} \frac{1}{\left\vert z_{m} \, - \, z_{\ell} \right\vert^{2}} \right)^{\frac{1}{2}} \left[ a^{\frac{5(1-h)}{3}} \, \left( \sum_{\ell = 1}^{M} \left\vert Y(z_{\ell}) - \tilde{Y}_{\ell} \right\vert^{2} \right)^{\frac{1}{2}} \, + \, a^{\frac{7(1-h)}{3}} \, \left\Vert g \right\Vert_{\mathbb{H}^{-\frac{1}{2}}\left( \partial \Omega \right)}\right].
\end{equation}
Now, by making again use of the Cauchy-Schwartz inequality in $(\ref{EquaIzm=...+Lzm})$ and using $(\ref{EstLzmEqua0815})$, we obtain:   
\begin{eqnarray*}
\left\vert \bm{I}(z_{m}) \right\vert  & \lesssim & a^{\frac{5(1-h)}{6}} \; \left\Vert \nabla Y  \right\Vert_{\mathbb{L}^{2}(\Omega)} \; \left(
\sum_{\ell = 1 \atop \ell \neq m}^{M}    \frac{1}{ \left\vert z_{m} - z_{\ell}  \right\vert^{2}} \right)^{\frac{1}{2}}    + a^{\frac{5(1-h)}{6}} \; \left\Vert Y  \right\Vert_{\mathbb{L}^{2}(\Omega)} \; \left(
\sum_{\ell = 1 \atop \ell \neq m}^{M}    \frac{1}{ \left\vert z_{m} - z_{\ell}  \right\vert^{4}} \right)^{\frac{1}{2}} \\ &+&  \left\Vert  Y  \right\Vert_{\mathbb{L}^{2}(\Omega)} \; \left( \int_{\Omega_{m}} \frac{1}{\left\vert z_{m} - y  \, \right\vert^{2}} \, dy \right)^{\frac{1}{2}}  +  a^{\frac{5(1-h)}{3}} \, \left( \sum_{\ell = 1 \atop \ell \neq m}^{M} \frac{1}{\left\vert z_{m} \, - \, z_{\ell} \right\vert^{2}} \right)^{\frac{1}{2}} \, \left( \sum_{\ell = 1}^{M} \left\vert Y(z_{\ell}) - \tilde{Y}_{\ell} \right\vert^{2} \right)^{\frac{1}{2}} \\ &+& \, a^{\frac{7(1-h)}{3}} \, \left( \sum_{\ell = 1 \atop \ell \neq m}^{M} \frac{1}{\left\vert z_{m} \, - \, z_{\ell} \right\vert^{2}} \right)^{\frac{1}{2}}\, \left\Vert g \right\Vert_{\mathbb{H}^{-\frac{1}{2}}\left( \partial \Omega \right)}.
\end{eqnarray*}
The use of the lemma below allows for the estimation of the discrete sum of the inverse power-weighted distance between the center of the droplets, and the discrete sum of the inverse power-weighted distance at the boundary.
\begin{lemma}\label{distLemma}
Let $\left\{ D_{m} \, = \, z_{m} \, + \, a \, B  \right\}_{m=1}^{M} \subset \Omega$. Then, we have the following estimates
\begin{enumerate}
    \item Inverse distance between the droplets centers,
\begin{equation}\label{SID}
    \sum_{j = 1 \atop j \neq m}^{M} \left\vert z_{m} - z_{j} \right\vert^{-k} = \begin{cases}
			\mathcal{O}\left(d^{-3}\right), & \text{for $k < 3$}\\
            & \\
            \mathcal{O}\left(d^{-k}\right), & \text{for $k > 3$}
		 \end{cases}.
\end{equation}
    \item[]
    \item Inverse distance to the boundary,
    \begin{equation}\label{SID+}
    \sum_{j = 1}^{M} \frac{1}{\dist^{k}\left(D_{j}; \partial \Omega \right)}  = \begin{cases}
			\mathcal{O}\left(d^{-3}\right), & \text{for $k < 3$}\\
            & \\
            \mathcal{O}\left(d^{-k}\right), & \text{for $k > 3$}
		 \end{cases}.
\end{equation}
\end{enumerate}
\end{lemma}
\begin{proof}
    see \textbf{Subsection \ref{ProofCounitingLemma}}.
\end{proof}
Thanks to $(\ref{SID})$ and the estimation $(\ref{dmin})$, we have 
\begin{eqnarray}\label{AI3RHS}
\nonumber
\left\vert \bm{I}(z_{m}) \right\vert  & \lesssim &  a^{\frac{5(1-h)}{6}}  \; \left\Vert \nabla Y  \right\Vert_{\mathbb{L}^{2}(\Omega)} \; d^{-\frac{3}{2}} + a^{\frac{5(1-h)}{6}} \; \left\Vert  Y  \right\Vert_{\mathbb{L}^{2}(\Omega)} \; d^{-2} + \left\Vert  Y  \right\Vert_{\mathbb{L}^{2}(\Omega)} \; \left( \int_{\Omega_{m}} \frac{1}{\left\vert z_{m} - y \right\vert^{2}}  dy \right)^{\frac{1}{2}} \\ &+&  a^{\frac{7(1-h)}{6}} \, \left( \sum_{\ell = 1}^{M} \left\vert Y(z_{\ell}) - \tilde{Y}_{\ell} \right\vert^{2} \right)^{\frac{1}{2}}  \, + \, a^{\frac{11(1-h)}{6}} \, \left\Vert g \right\Vert_{\mathbb{H}^{-\frac{1}{2}}\left( \partial \Omega \right)}.
\end{eqnarray}
To achieve the estimation of $\bm{I}(z_{m})$, we set and estimate the third term appearing on the R.H.S of the above equation
\begin{equation*}
\bm{I_3}(z_{m}) := \int_{B(z_{m};r)}  \frac{1}{\left\vert z_{m} - y \right\vert^{2}} \, dy + \int_{\Omega_{m} \setminus B(z_{m};r)}  \frac{1}{\left\vert z_{m} - y \right\vert^{2}} \,  dy,
\end{equation*}
where $B(z_{m};r)$ is the ball of center $z_{m}$ and radius $r$, where $r$ is such that $r \in \bm{I_4} := \left[ 0 ; \frac{\sqrt{3}}{2} \, a^{\frac{(1-h)}{3}} \right]$.
Then, 
\begin{eqnarray*}
\bm{I_3}(z_{m}) & \lesssim &  \int_{0}^{r} \int_{\partial \, B(z_{m};s)}  \frac{1}{\left\vert z_{m} - y \right\vert^{2}} \, d\sigma(y) \, ds +  \frac{1}{r^{2}} \; \left\vert \Omega_{m} \setminus B(z_{m};r) \right\vert  \\
& = &  \int_{0}^{r} \, \frac{1}{s^{2}} \, \left\vert \partial \, B(z_{m};s) \right\vert  \,  ds +  \frac{1}{r^{2}} \; \left\vert \Omega_{m} \setminus B(z_{m};r) \right\vert  =  \frac{8 \, \pi \, r}{3} +  \frac{1}{r^{2}} \; a^{1-h}  \leq   \; \underset{r \in \bm{I_4}}{\max} \, \rho(r,a),
\end{eqnarray*}
where 
\begin{equation*}
\rho(r,a) :=    \frac{8 \, \pi}{3} \, r +  \frac{1}{r^{2}} \; a^{1-h}.  
\end{equation*}
We have $\underset{r \in \bm{I_4}}{\max} \, \rho(r,a) = \rho(r_{sol},a)$, where $r_{sol}$ is such that $\partial_{r} \rho(r_{sol},a) = 0$. Straightforward computations, gives us $r_{sol} =  \left( \frac{3}{4 \, \pi} \, a^{1-h} \right)^{\frac{1}{3}}$. Consequently, $\;\; \underset{r \in \bm{I_4}}{\max} \, \rho(r,a) = \left( 48 \, \pi^{2} \right)^{\frac{1}{3}} \, a^{\frac{(1-h)}{3}}. $
Hence, 
\begin{equation}\label{EstimationAI3}
\bm{I_3}(z_{m})  =  \mathcal{O}\left( a^{\frac{(1-h)}{3}} \right). 
\end{equation}
Finally, by gathering $(\ref{EstimationAI3})$ and  $(\ref{AI3RHS})$ and using the fact that $d \sim a^{\frac{1-h}{3}}$, we end up with 
\begin{eqnarray*}
\left\vert \bm{I}(z_{m}) \right\vert & \lesssim & a^{\frac{(1-h)}{3}} \; \left\Vert \nabla Y \right\Vert_{\mathbb{L}^{2}(\Omega)} + a^{\frac{(1-h)}{6}} \; \left\Vert  Y \right\Vert_{\mathbb{L}^{2}(\Omega)} \\ &+& a^{\frac{7(1-h)}{6}} \, \left( \sum_{\ell = 1}^{M} \left\vert Y(z_{\ell}) - \tilde{Y}_{\ell} \right\vert^{2} \right)^{\frac{1}{2}}  \, + \, a^{\frac{11(1-h)}{6}} \, \left\Vert g \right\Vert_{\mathbb{H}^{-\frac{1}{2}}\left( \partial \Omega \right)} \\ & \overset{(\ref{Yfctg})}{\lesssim} & a^{\frac{(1-h)}{6}} \; \left\Vert g \right\Vert_{\mathbb{H}^{-\frac{1}{2}}(\partial \Omega)} \, + \, a^{\frac{7(1-h)}{6}} \, \left( \sum_{\ell = 1}^{M} \left\vert Y(z_{\ell}) - \tilde{Y}_{\ell} \right\vert^{2} \right)^{\frac{1}{2}}.
\end{eqnarray*}
Hence, plugging the above estimation into $(\ref{tildeI=I+err})$, we get
\begin{equation}\label{EstimationAI}
\bm{\tilde{I}}(z_{m}) = \mathcal{O}\left(P^{2} \,  a^{\frac{(9 - 5 \delta) \, (1-h)}{18(3-\delta)}} \; \left\Vert g \right\Vert_{\mathbb{H}^{-\frac{1}{2}}(\partial \Omega)} + \, a^{\frac{7(1-h)}{6}} \, \left( \sum_{\ell = 1}^{M} \left\vert Y(z_{\ell}) - \tilde{Y}_{\ell} \right\vert^{2} \right)^{\frac{1}{2}}\right).
\end{equation}
Finally, by going back to $(\ref{LSE=RHS+AI+AII})$ and making use of the estimation $(\ref{EstimationAI})$, we obtain 
\begin{eqnarray}\label{1918}
\nonumber
Y(z_{m}) -  \alpha  \, \sum_{j = 1 \atop j \neq m}^{M} G(z_{m};z_{j}) \, \frac{1}{\beta_{j}} \, Y(z_{j}) & = & S(z_{m}) + \mathcal{O}\left( P^{4} \,  a^{\frac{(9 - 5 \delta) \, (1-h)}{18(3-\delta)}} \; \left\Vert g \right\Vert_{\mathbb{H}^{-\frac{1}{2}}(\partial \Omega)} \right) \\
&+& \mathcal{O}\left(a^{\frac{7(1-h)}{6}} \, P^{2} \, \left( \sum_{\ell = 1}^{M} \left\vert Y(z_{\ell}) - \tilde{Y}_{\ell} \right\vert^{2} \right)^{\frac{1}{2}}\right).
\end{eqnarray}
Taking the difference between $(\ref{1924})$ and $(\ref{1918})$ gives us the following  algebraic system:
\begin{eqnarray*}
\left( \tilde{Y}_{m} - Y(z_{m}) \right) +  \sum_{j = 1 \atop j \neq m}^{M} G(z_{m};z_{j}) \; P^{2} \; a^{1-h} \, \frac{1}{\beta_{j}} \, \left( \tilde{Y}_{j} - Y(z_{j}) \right) &=& \mathcal{O}\left( P^{4} \,  a^{\frac{(9 - 5 \delta) \, (1-h)}{18(3-\delta)}} \; \left\Vert g \right\Vert_{\mathbb{H}^{-\frac{1}{2}}(\partial \Omega)} \right) \\
&+& \mathcal{O}\left(a^{\frac{7(1-h)}{6}} \, P^{2} \, \left( \sum_{\ell = 1}^{M} \left\vert Y(z_{\ell}) - \tilde{Y}_{\ell} \right\vert^{2} \right)^{\frac{1}{2}}\right).
\end{eqnarray*}
Consequently, using \textbf{Lemma \ref{MTR}}, we have
\begin{equation*}
\left( \sum_{m=1}^{M} \left\vert \tilde{Y}_{m} - Y(z_{m}) \right\vert^{2} \right)^{\frac{1}{2}} \, \lesssim \, P^{4} \, a^{\frac{-(1-h)(9 - 2\delta)}{9(3-\delta)}} \; \left\Vert  g \right\Vert_{\mathbb{H}^{-\frac{1}{2}}(\partial \Omega)}  \, + \, a^{\frac{2(1-h)}{3}} \, P^{2} \, \left( \sum_{\ell = 1}^{M} \left\vert Y(z_{\ell}) - \tilde{Y}_{\ell} \right\vert^{2} \right)^{\frac{1}{2}},
\end{equation*}
which, by knowing that $h<1$, can be reduced to
\begin{equation}\label{maxY-Y}
\left( \sum_{m=1}^{M} \left\vert \tilde{Y}_{m} - Y(z_{m}) \right\vert^{2} \right)^{\frac{1}{2}} \, = \, \mathcal{O}\left( P^{4} \, a^{\frac{-(1-h)(9 - 2\delta)}{9(3-\delta)}} \; \left\Vert  g \right\Vert_{\mathbb{H}^{-\frac{1}{2}}(\partial \Omega)} \right).
\end{equation}
The previous estimation confirm the convergence of the discrete algebraic system to the continuous L.S.E.
\subsection{Finishing the proof of Theorem \ref{principal-Thm}}
We set $\bm{J}$ to be: 
\begin{eqnarray}\label{DefJ}
\nonumber
\bm{J} &:=& \omega^{2} \, \frac{\rho_{1}}{k_{1}} \,  \langle v^{g}; p^{f} \rangle_{\mathbb{L}^{2}(D)} + P^{2} \, \langle u^{g}; p^{f} \rangle_{\mathbb{L}^{2}(\Omega)} \, - \, \omega^{2} \, \langle n^{2} \, v^{g}; p^{f} \rangle_{\mathbb{L}^{2}(D)} \\ & = & \sum_{j=1}^{M} \left[\omega^{2} \, \frac{\rho_{1}}{k_{1}} \,   \langle v^{g}_{j}; p^{f} \rangle_{\mathbb{L}^{2}(D_{j})} + P^{2} \,  \langle u^{g}_{j}; p^{f} \rangle_{\mathbb{L}^{2}(\Omega_{j})} \right] \, - \, \omega^{2} \, \langle n^{2} \, v^{g}; p^{f} \rangle_{\mathbb{L}^{2}(D)}. 
\end{eqnarray}
Then, by using the Taylor expansion for the function $p^{f}(\cdot)$ near the centers, we get: 
\begin{equation}\label{Equa0729}
\bm{J} =  \sum_{j=1}^{M} p^{f}(z_{j}) \, \left[ \omega^{2} \, \frac{\rho_{1}}{k_{1}}   \, \int_{D_{j}} v^{g}_{j}(x) \, dx + P^{2} \,  \int_{\Omega_{j}} u^{g}_{j}(x) \, dx \right] + \bm{Err_J},
\end{equation}
where 
\begin{eqnarray*}
\bm{Err_J} &:=& \omega^{2} \, \frac{\rho_{1}}{k_{1}} \,  \sum_{j=1}^{M} \int_{D_{j}} v^{g}_{j}(x) \, \int_{0}^{1}  \nabla p^{f}(z_{j} + t ( x - z_{j}) ) \cdot (x - z_{j})  \, dt \, dx \\ &+& P^{2} \, \sum_{j=1}^{M} \int_{\Omega_{j}} u^{g}_{j}(x)  \, \int_{0}^{1}  \nabla p^{f}(z_{j} + t ( x - z_{j}) ) \cdot (x - z_{j}) \, dt  \, dx \, - \,  \omega^{2} \, \langle n^{2} \, v^{g}; p^{f} \rangle_{\mathbb{L}^{2}(D)},  
\end{eqnarray*}
which can be estimated, by recalling that $\rho_{1} \sim 1; \, k_{1} \sim a^{2}$ and $\left\vert \Omega_{j} \right\vert \sim a^{1-h}$, as
\begin{eqnarray}\label{EMAS}
\nonumber
\left\vert \bm{Err_J} \right\vert 
& \lesssim & a^{-1} \, \sum_{j=1}^{M} \left\Vert v^{g}_{j} \right\Vert_{\mathbb{L}^{2}(D_{j})} \, \left\Vert  \nabla p^{f} \right\Vert_{\mathbb{L}^{2}(D_{j})} + a^{\frac{(1-h)}{3}} \, P^{2} \, \sum_{j=1}^{M} \left\Vert u^{g}_{j} \right\Vert_{\mathbb{L}^{2}(\Omega_{j})}  \, \left\Vert \nabla p^{f} \right\Vert_{\mathbb{L}^{2}(\Omega_{j})} \\ \nonumber &+& \, \left\Vert v^{g} \right\Vert_{\mathbb{L}^{2}(D)} \, \left\Vert p^{f} \right\Vert_{\mathbb{L}^{2}(D)} \\ \nonumber
& \lesssim & a^{-1} \, \left\Vert v^{g} \right\Vert_{\mathbb{L}^{2}(D)} \, \left\Vert  \nabla p^{f} \right\Vert_{\mathbb{L}^{2}(D)} + a^{\frac{(1-h)}{3}} \, P^{2} \, \left\Vert u^{g} \right\Vert_{\mathbb{L}^{2}(\Omega)}  \, \left\Vert \nabla p^{f} \right\Vert_{\mathbb{L}^{2}(\Omega)} + \left\Vert v^{g} \right\Vert_{\mathbb{L}^{2}(D)} \, \left\Vert p^{f} \right\Vert_{\mathbb{L}^{2}(D)} \\
& \overset{(\ref{HS-Add-proof-1})}{\underset{(\ref{SH-Add-proof-1})}{\lesssim}} & a^{\frac{(3h-2)}{2}} \, \left\Vert v^{g} \right\Vert_{\mathbb{L}^{2}(D)} \, \left\Vert f \right\Vert_{\mathbb{H}^{-\frac{1}{2}}(\partial \Omega)} + a^{\frac{(1-h)}{3}} \, P^{2} \, \left\Vert u^{g} \right\Vert_{\mathbb{L}^{2}(\Omega)}  \, \left\Vert \nabla p^{f} \right\Vert_{\mathbb{L}^{2}(\Omega)}.
\end{eqnarray}
Next, we estimate $\left\Vert \nabla p^{f} \right\Vert_{\mathbb{L}^{2}(\Omega)}$. To do this, we recall from $(\ref{ETVKT})$ that $\nabla p^{f}(x) = \nabla \mathscr{S}(f)(x)$, $x \in \Omega$,
where $\mathscr{S}(\cdot)$ is the Single-Layer operator defined by $(\ref{NMP1})$,
Hence, as $\mathscr{S}(\cdot) \, : \, \mathbb{H}^{-\frac{1}{2}}(\partial \Omega) \rightarrow \mathbb{H}^{1}(\Omega)$, we have 
\begin{equation}\label{nablapfOmega}
\left\Vert \nabla p^{f} \,  \right\Vert_{\mathbb{L}^{2}(\Omega)} = \mathcal{O}\left( \left\Vert  f \right\Vert_{\mathbb{H}^{-\frac{1}{2}}(\partial \Omega)} \right). 
\end{equation}
Then, by using $(\ref{AprioriEstvf})$ and $(\ref{nablapfOmega})$ into $(\ref{EMAS})$, we have
\begin{equation}\label{ASEqua1005}
\left\vert \bm{Err_J} \right\vert \,  \lesssim  \,  \left\Vert f \right\Vert_{\mathbb{H}^{-\frac{1}{2}}(\partial \Omega)} \, \left[ \left\Vert g \right\Vert_{\mathbb{H}^{-\frac{1}{2}}(\partial \Omega)} \, a^{\frac{(7h-1)}{6}}  + a^{\frac{(1-h)}{3}} \, P^{2} \, \left\Vert u^{g} \right\Vert_{\mathbb{L}^{2}(\Omega)} \right]. 
\end{equation}
Let us now estimate $u^{g}(\cdot)$ in terms of $g(\cdot)$. As $u^{g}(\cdot)$ is solution of $(\ref{EquaWf})$, then it satisfies the following integral equation 
\begin{equation}\label{SHEqua1001}
    u^{g}(\cdot) \, + \, P^{2} \, \mathscr{N}(u^{g})(\cdot) \, = \, \mathscr{S}(g)(\cdot), \quad \text{in} \;\; \Omega, 
\end{equation}
 where $\mathscr{N}(\cdot)$ is the Newtonian operator defined by $(\ref{DefNewG})$. Hence, by gathering $(\ref{NewL.S.E}), (\ref{SHEqua1001})$, and $(\ref{Yfctg})$, we deduce that 
    \begin{equation}\label{ASEqua1006}
        \left\Vert u^{g} \right\Vert_{\mathbb{L}^{2}(\Omega)} \, \lesssim \, \left\Vert g \right\Vert_{\mathbb{H}^{-\frac{1}{2}}(\partial \Omega)}. 
    \end{equation}
Then, by plugging $(\ref{ASEqua1006})$ into $(\ref{ASEqua1005})$, and using the fact that $h \, > \, \frac{1}{3}$, we deduce that    
\begin{equation*}
 \left\vert \bm{Err_J} \right\vert \, \lesssim  \, a^{\frac{(1-h)}{3}} \, P^{2} \,  \left\Vert g \right\Vert_{\mathbb{H}^{-\frac{1}{2}}(\partial \Omega)} \,    \left\Vert f \right\Vert_{\mathbb{H}^{-\frac{1}{2}}(\partial \Omega)}.
\end{equation*}
Going back to $(\ref{Equa0729})$, we obtain  
\begin{eqnarray*}
\bm{J} &=& \sum_{j=1}^{M} p^{f}(z_{j}) \, \left[ \omega^{2} \, \frac{\rho_{1}}{k_{1}} \, \int_{D_{j}} v^{g}_{j}(x) \, dx + P^{2} \,  \int_{\Omega_{j}} u^{g}_{j}(x) \, dx \right] \, + \, \mathcal{O}\left( a^{\frac{(1-h)}{3}} \, P^{2} \, \left\Vert f \right\Vert_{\mathbb{H}^{-\frac{1}{2}}(\partial \Omega)}  \, \left\Vert g \right\Vert_{\mathbb{H}^{-\frac{1}{2}}(\partial \Omega)} \right). 
\end{eqnarray*}
Remark that $u^{g}(\cdot)$, solution of $(\ref{EquaWf})$, is solution of the L.S.E given by $(\ref{NewL.S.E})$,  i.e., $u^{g}(\cdot) = Y(\cdot)$, in $\Omega$. Using this, we obtain
\begin{equation*}
\int_{\Omega_{j}} u^{g}_{j}(x) \, dx =  \int_{\Omega_{j}} Y(x) \, dx =  Y(z_{j}) \, \left\vert \Omega_{j} \right\vert +  \int_{\Omega_{j}} \int_{0}^{1} (x-z_{j}) \cdot \nabla Y(z_{j}+t(x-z_{j})) \, dt \, dx. 
\end{equation*}
Then, 
\begin{eqnarray}\label{J2T}
\nonumber
\bm{J} &=&  \sum_{j=1}^{M} p^{f}(z_{j}) \, \left[ \omega^{2} \, \frac{\rho_{1}}{k_{1}}  \, \int_{D_{j}} v^{f}_{j}(x) \, dx + P^{2} \, Y(z_{j}) \, \left\vert \Omega_{j} \right\vert  \right] \\ \nonumber
&+&  \sum_{j=1}^{M} p^{f}(z_{j}) \, \left[ P^{2} \,     \int_{\Omega_{j}} \int_{0}^{1} (x-z_{j}) \cdot \nabla Y(z_{j}+t(x-z_{j})) \, dt \, dx \right] \\ &+&  \mathcal{O}\left( a^{\frac{(1-h)}{3}} \, P^{2} \, \left\Vert f \right\Vert_{\mathbb{H}^{-\frac{1}{2}}(\partial \Omega)}  \, \left\Vert g \right\Vert_{\mathbb{H}^{-\frac{1}{2}}(\partial \Omega)} \right). 
\end{eqnarray}
We estimate the second term on the R.H.S, as 
\begin{eqnarray}\label{0539}
\nonumber
T_{2} &:=& \sum_{j=1}^{M} p^{f}(z_{j}) \, \left[ P^{2} \,     \int_{\Omega_{j}} \int_{0}^{1} (x-z_{j}) \cdot \nabla Y(z_{j}+t(x-z_{j})) \, dt \, dx \right] \\ \nonumber
\left\vert T_{2} \right\vert & \lesssim & P^{2}  \sum_{j=1}^{M} \left\vert p^{f}(z_{j}) \right\vert \, \left\vert      \int_{\Omega_{j}} \int_{0}^{1} (x-z_{j}) \cdot \nabla Y(z_{j}+t(x-z_{j})) \, dt \, dx \right\vert \\ \nonumber
& \leq & P^{2} \,  \left( \sum_{j=1}^{M} \left\vert p^{f}(z_{j}) \right\vert^{2} \right)^{\frac{1}{2}} \, \left( \sum_{j=1}^{M} \left\vert \int_{\Omega_{j}} \int_{0}^{1} (x-z_{j}) \cdot \nabla Y(z_{j}+t(x-z_{j})) \, dt \, dx \right\vert^{2} \right)^{\frac{1}{2}} \\
&=& \mathcal{O}\left( P^{2} \, \left( \sum_{j=1}^{M} \left\vert p^{f}(z_{j}) \right\vert^{2} \right)^{\frac{1}{2}} \,  a^{\frac{5}{6}(1-h)}  \, \left\Vert \nabla Y \right\Vert_{\mathbb{L}^{2}(\Omega)} \right).
\end{eqnarray}
At this stage, we need first to estimate $\overset{M}{\underset{j=1}{\sum}}  \left\vert p^{f}(z_{j}) \right\vert^{2}$. To achieve this, we recall that $p^{f}(\cdot)$ is solution of $(\ref{EquaKg-introdution})$ and we set $\tilde{p}^{f}(\cdot)$ to be solution of 
\begin{align}\label{Equatildepf}
\begin{cases}  
(\Delta +\omega^2)\, \tilde{p}^{f} = 0 \quad \text{in} \quad  \Omega,  \\ 
\partial_{\nu} \tilde{p}^{f} =  f \;\;\;  \text{on} \quad \partial \Omega. 
\end{cases}
\end{align}
Now, by subtracting $(\ref{EquaKg-introdution})$ from $(\ref{Equatildepf})$, we get 
\begin{align*}
\begin{cases}  
\left( \Delta + \omega^{2} \, n^{2} \right) \, \left( p^{f} - \tilde{p}^{f} \right) = \omega^{2} (- \,\, n^{2}+1) \, \tilde{p}^{f}  \quad \text{in} \quad  \Omega,  \\ 
\qquad \qquad \;\; \partial_{\nu} \left( p^{f} - \tilde{p}^{f} \right) =  0 \;\;\;  \text{on} \quad \partial \Omega. 
\end{cases}
\end{align*}
Its solution takes the following form 
\begin{equation*}
    \left( p^{f} - \tilde{p}^{f} \right)(z) =  \; \omega^{2}\int_{\Omega} G(z,y) \,  (-\, n^{2}+1)(y) \, \tilde{p}^{f}(y) \, dy, \quad z \in \Omega,
\end{equation*}
where $G(\cdot,\cdot)$ is the Green's kernel solution of $(\ref{EquaGKernel})$. By taking the modulus we get
\begin{equation}\label{pf-tildepf}
   \left\vert \left( p^{f} - \tilde{p}^{f} \right)(z) \right\vert \leq \; \left\Vert  \, \omega^{2}(- n^{2}+1) \right\Vert_{\mathbb{L}^{\infty}(\Omega)} \; \left\Vert G(z,\cdot) \right\Vert_{\mathbb{L}^{2}(\Omega)} \; \left\Vert \tilde{p}^{f} \right\Vert_{\mathbb{L}^{2}(\Omega)} \lesssim \left\Vert \tilde{p}^{f} \right\Vert_{\mathbb{L}^{2}(\Omega)},  
\end{equation}
where the last estimation is a consequence of the $\mathbb{L}^{2}(\Omega)$-integrability of the Green's kernel $G(z,\cdot)$, uniformly on $z \in \Omega$, and the boundedness of $\left\Vert n^{2} \right\Vert_{\mathbb{L}^{\infty}(\Omega)}$. In addition, we use the fact that $(\ref{Equatildepf})$ is a well posed problem to derive 
\begin{equation}\label{WPP}
\left\Vert \tilde{p}^{f} \right\Vert_{\mathbb{L}^{2}(\Omega)} \lesssim \left\Vert f \right\Vert_{\mathbb{H}^{-\frac{1}{2}}(\partial \Omega)}. 
\end{equation}
Then, by gathering $(\ref{pf-tildepf})$ and $(\ref{WPP})$, we obtain:
\begin{equation}\label{MMTT}
   \left\vert \left( p^{f} - \tilde{p}^{f} \right)(z) \right\vert  \lesssim \left\Vert f \right\Vert_{\mathbb{H}^{-\frac{1}{2}}(\partial \Omega)}.  
\end{equation}
As $\tilde{p}^{f}(\cdot)$ satisfies a Helmholtz equation in $\Omega$, see $(\ref{Equatildepf})$, then we have the following mean value integral formula
\begin{equation}\label{MVTHEqS}
  \frac{\sin\left(\omega \, r^{\prime} \right)}{\omega \, r^{\prime}} \,  \tilde{p}^{f}(z_{j}) \, = \, \frac{1}{\left\vert \partial \mathcal{B}_{j}(r^{\prime})  \right\vert} \; \int_{\partial \mathcal{B}_{j}(r^{\prime})} \tilde{p}^{f}(x) \; d\sigma(x), 
\end{equation}
where $\mathcal{B}_{j}(r^{\prime})$ is the  ball, centered at $z_{j}$ with radius $r^{\prime}$, contained in the cube $\Omega_{j}$. See \cite[Formula (36), page 288]{courant2024methods}. Now, by integrating both sides of $(\ref{MVTHEqS})$ with respect to the variable $r^{\prime}$, from $0$ to $r$, where $r$ is such that $\mathcal{B}_{j} \, := \, \mathcal{B}_{j}(r)$ is the largest ball, centered at $z_{j}$ with radius $r^{\prime}$, contained in the cube $\Omega_{j}$, we obtain
\begin{equation}\label{Equa0551}
    \tilde{p}^{f}(z_{j}) \, = \, \frac{\omega^{3}}{4 \, \pi \, \left(\sin\left(\omega \, r \right) \, - \, \omega \, r \, \cos(\omega \, r) \right)} \; \int_{\mathcal{B}_{j}} \tilde{p}^{f}(x) \; dx. 
\end{equation}
In addition, because $r$ is small the following approximation holds 
\begin{equation}\label{D-Equa0551}
    4 \, \pi \, \left(\sin\left(\omega \, r \right) \, - \, \omega \, r \, \cos(\omega \, r) \right) \, = \, \frac{4 \, \pi}{3} \, \omega^{3} \, r^{3} \, + \, \mathcal{O}\left( r^{5} \right) \, = \, \omega^{3} \, \left\vert \mathcal{B}_{j}\right\vert \, + \, \mathcal{O}\left( r^{5} \right).
\end{equation}
Then, by plugging $(\ref{D-Equa0551})$ into $(\ref{Equa0551})$, we obtain
\begin{equation*}
    \tilde{p}^{f}(z_{j}) \, = \, \frac{1}{\left\vert \mathcal{B}_{j}\right\vert \, + \, \mathcal{O}\left( r^{5} \right)} \; \int_{\mathcal{B}_{j}} \tilde{p}^{f}(x) \; dx. 
\end{equation*}
We observe that, for $1 \leq j \leq M$, we have $\left\vert \mathcal{B}_{j} \right\vert = \left\vert \mathcal{B}_{j_{0}} \right\vert \sim a^{1-h} \sim M^{-1}$.  Then, using the Cauchy-Schwartz inequality, in the above formula, we deduce that: 
\begin{equation}\label{LAM}
    \left\vert \tilde{p}^{f}(z_{j}) \right\vert \lesssim\left\vert \mathcal{B}_{j} \right\vert^{-\frac{1}{2}} \; \left\Vert \tilde{p}^{f} \right\Vert_{\mathbb{L}^{2}( \mathcal{B}_{j})}. 
\end{equation}
Therefore, 
\begin{equation*}
    \sum_{j=1}^{M} \left\vert p^{f}(z_{j}) \right\vert^{2} = \sum_{j=1}^{M} \left\vert \tilde{p}^{f}(z_{j}) + \left( p^{f}(z_{j}) -  \tilde{p}^{f}(z_{j}) \right) \right\vert^{2}   \lesssim  \sum_{j=1}^{M} \left\vert \tilde{p}^{f}(z_{j}) \right\vert^{2} + \sum_{j=1}^{M} \left\vert  p^{f}(z_{j}) -  \tilde{p}^{f}(z_{j})  \right\vert^{2}. 
\end{equation*}
By making use of $(\ref{LAM})$ and $(\ref{MMTT})$, we obtain: 
\begin{equation*}
    \sum_{j=1}^{M} \left\vert p^{f}(z_{j}) \right\vert^{2} \, \lesssim \, \sum_{j=1}^{M} \left\vert \mathcal{B}_{j} \right\vert^{-1} \; \left\Vert \tilde{p}^{f} \right\Vert^{2}_{\mathbb{L}^{2}( \mathcal{B}_{j})} + M \; \left\Vert f \right\Vert^{2}_{\mathbb{H}^{-\frac{1}{2}}(\partial \Omega)} \,
     \lesssim \, \left\vert \mathcal{B}_{j_{0}} \right\vert^{-1} \;  \left\Vert \tilde{p}^{f} \right\Vert^{2}_{\mathbb{L}^{2}\left( \overset{M}{\underset{j=1}{\cup}} \mathcal{B}_{j} \right)} + M \; \left\Vert f \right\Vert^{2}_{\mathbb{H}^{-\frac{1}{2}}(\partial \Omega)}. 
\end{equation*}
As $\left\vert \mathcal{B}_{j_{0}} \right\vert^{-1}  \sim M$ and $\overset{M}{\underset{j=1}{\cup}} \mathcal{B}_{j} \subset \Omega$, we obtain:
\begin{equation}\label{PfMf}
    \sum_{j=1}^{M} \left\vert p^{f}(z_{j}) \right\vert^{2}  \lesssim  M \;  \left(   \left\Vert \tilde{p}^{f} \right\Vert^{2}_{\mathbb{L}^{2}( \Omega )} +  \left\Vert f \right\Vert^{2}_{\mathbb{H}^{-\frac{1}{2}}(\partial \Omega)} \right) \overset{(\ref{WPP})}{\lesssim} M \;    \left\Vert f \right\Vert^{2}_{\mathbb{H}^{-\frac{1}{2}}(\partial \Omega)}. 
\end{equation}
We continue with our estimation of $(\ref{0539})$ by using $(\ref{PfMf})$ to get
\begin{equation*}
\left\vert T_{2} \right\vert  \lesssim P^{2} \,  M^{\frac{1}{2}} \, \left\Vert f \right\Vert_{\mathbb{H}^{-\frac{1}{2}}(\partial \Omega)} \, a^{\frac{5}{6}(1-h)}  \, \left\Vert \nabla Y \right\Vert_{\mathbb{L}^{2}(\Omega)} 
\overset{(\ref{Yfctg})}{=} \mathcal{O}\left(  P^{2} \, a^{\frac{(1-h)}{3}}  \, \left\Vert f \right\Vert_{\mathbb{H}^{-\frac{1}{2}}(\partial \Omega)} \, \left\Vert g \right\Vert_{\mathbb{H}^{-\frac{1}{2}}(\partial \Omega)} \right).
\end{equation*}
Then, using the above estimation of the term $\bm{J}$, given by $(\ref{J2T})$, becomes
\begin{equation}\label{HAJ}
\bm{J} = \sum_{j=1}^{M} p^{f}(z_{j}) \, \left[ \omega^{2} \, \frac{\rho_{1}}{k_{1}} \, \int_{D_{j}} v^{g}_{j}(x) \, dx + P^{2} \, Y(z_{j}) \, \left\vert \Omega_{j} \right\vert  \right] + \mathcal{O}\left( a^{\frac{(1-h)}{3}} \, P^{2} \, \left\Vert f \right\Vert_{\mathbb{H}^{-\frac{1}{2}}(\partial \Omega)}  \, \left\Vert g \right\Vert_{\mathbb{H}^{-\frac{1}{2}}(\partial \Omega)} \right). 
\end{equation}
To see how the parameter $P^{2}$ is related to the scattering coefficient $\alpha$, we set the following lemma.  
\begin{lemma}\label{EstimationRealphaImalpha}
The scattering coefficient $\alpha$, given by $(\ref{Defalpha})$, admits
the following estimation
\begin{equation}\label{EstimationRealpha}
\alpha  \, = \, - \; P^{2} \; a^{1-h}  + \mathcal{O}\left( a \right), 
\end{equation}
where $0 < h < 1$, and
\begin{equation*}
P^{2} := \frac{ - \, k_{0} \; \left( \langle 1; \overline{e}_{n_{0}} \rangle_{\mathbb{L}^{2}(B)} \right)^{2}}{\lambda_{n_{0}}^{B} \; c_{n_{0}}}. 
\end{equation*}
In addition, the following estimation holds
\begin{equation}\label{Equa1020}
    \left\Vert W_{m} \right\Vert_{\mathbb{L}^{2}(D_{m})} \,  \lesssim \, a^{-(2+h)} \, \left\Vert 1 \right\Vert_{\mathbb{L}^{2}(D_{m})},
\end{equation}
where $W_{m}(\cdot)$ is solution of $(\ref{EquaWm3D})$. 
\end{lemma}
\begin{proof}
See \textbf{Subsection \ref{ESC}}. 
\end{proof}
Knowing that $\left\vert \Omega_{j} \right\vert = a^{1-h}$ and using $(\ref{EstimationRealpha})$, we deduce that $P^{2} \, Y(z_{j}) \, \left\vert \Omega_{j} \right\vert = - \, \alpha \, Y(z_{j})$. In addition, from $(\ref{DefYj})$, we have
\begin{equation*}
 \frac{\omega^{2} \, \rho_{1} \, \beta_{j}}{k_{1}}  \, \int_{D_{j}} v_{j}^{g}(x) \; dx =  \alpha  \, Y_{j}.
\end{equation*}
Hence, the equation $(\ref{HAJ})$ becomes, 
\begin{eqnarray}\label{DTBN}
\nonumber
\bm{J} &=&  \sum_{j=1}^{M} p^{f}(z_{j}) \, \alpha  \, \left[ \frac{1}{\beta_{j}} \, Y_{j}  -  Y(z_{j}) \right] + \mathcal{O}\left( a^{\frac{(1-h)}{3}} \, P^{2} \, \left\Vert f \right\Vert_{\mathbb{H}^{-\frac{1}{2}}(\partial \Omega)}  \, \left\Vert g \right\Vert_{\mathbb{H}^{-\frac{1}{2}}(\partial \Omega)} \right) \\ \nonumber
\bm{J} &=& \sum_{j=1}^{M} p^{f}(z_{j}) \, \alpha  \, \left[  \tilde{Y}_{j}  -  Y(z_{j}) \right] \, + \, \sum_{j=1}^{M} p^{f}(z_{j}) \, \alpha  \, \left[ \frac{1}{\beta_{j}} \, Y_{j}  -  \tilde{Y}_{j} \right] \\ &+& \mathcal{O}\left( a^{\frac{(1-h)}{3}} \, P^{2} \, \left\Vert f \right\Vert_{\mathbb{H}^{-\frac{1}{2}}(\partial \Omega)}  \, \left\Vert g \right\Vert_{\mathbb{H}^{-\frac{1}{2}}(\partial \Omega)} \right).
\end{eqnarray}
Next, we set and estimate the second term on the R.H.S. To do this, we have
\begin{equation*}
    \bm{Q} \, := \, \sum_{j=1}^{M} p^{f}(z_{j}) \, \alpha  \, \left[ \frac{1}{\beta_{j}} \, Y_{j}  -  \tilde{Y}_{j} \right] \, \overset{(\ref{Def-Beta_m-AS})}{=} \, \sum_{j=1}^{M} p^{f}(z_{j}) \, \alpha  \, \left[  Y_{j}  -  \tilde{Y}_{j} \right] \, + \,\sum_{j=1}^{M} p^{f}(z_{j}) \, \alpha  \, \frac{\int_{D_{j}} W_{j}(x) \, \mathcal{R}(x,z_{j})\, dx}{\beta_{j}} \, Y_{j}.
\end{equation*}
Then, using $(\ref{EstimationRealpha}), (\ref{Est-Int-Wm-Rm})$ and $(\ref{DTBeta_m})$, we obtain 
\begin{eqnarray}\label{EstQ}
\nonumber
    \left\vert \bm{Q} \right\vert \, & \lesssim & \, P^{2} \, a^{1-h} \, \left( \sum_{j=1}^{M} \left\vert p^{f}(z_{j}) \right\vert^{2} \right)^{\frac{1}{2}}  \,  \left[ \left( \sum_{j=1}^{M} \left\vert  Y_{j}  -  \tilde{Y}_{j} \right\vert^{2} \right)^{\frac{1}{2}} \, +  \, a^{\frac{2(1-h)}{3}} \, \left( \sum_{j=1}^{M} \left\vert Y_{j} \right\vert^{2} \right)^{\frac{1}{2}} \right] \\ \nonumber
   & \overset{(\ref{PfMf})}{\lesssim} & \, P^{2} \, a^{1-h} \, M^{\frac{1}{2}} \;    \left\Vert f \right\Vert_{\mathbb{H}^{-\frac{1}{2}}(\partial \Omega)} \, \left[ \left( \sum_{j=1}^{M} \left\vert  Y_{j}  -  \tilde{Y}_{j} \right\vert^{2} \right)^{\frac{1}{2}} \, + \, a^{\frac{2(1-h)}{3}} \,  \, \left( \sum_{j=1}^{M} \left\vert Y_{j} \right\vert^{2} \right)^{\frac{1}{2}} \right] \\ 
   & \overset{(\ref{Est-ell2-Ym-g})}{\lesssim} & \, P^{2} \, a^{1-h} \, M^{\frac{1}{2}} \;    \left\Vert f \right\Vert_{\mathbb{H}^{-\frac{1}{2}}(\partial \Omega)} \, \left[ \left( \sum_{j=1}^{M} \left\vert  Y_{j}  -  \tilde{Y}_{j} \right\vert^{2} \right)^{\frac{1}{2}} \, + \, a^{\frac{4(1-h)}{3}} \; \left\Vert g \right\Vert_{\mathbb{H}^{-\frac{1}{2}}\left( \partial \Omega \right)} \right].
\end{eqnarray}
In addition, by subtracting $(\ref{0820})$ from $(\ref{1924})$, we derive the following algebraic system 
\begin{equation*}
\left( Y_{m} - \tilde{Y}_{m} \right) \, +  \, \sum_{j=1 \atop j \neq m}^{M} G(z_{m};z_{j}) \; P^{2} \, a^{1-h} \,  \frac{1}{\beta_{j}} \, \left(Y_{j} - \tilde{Y}_{j} \right)\, = \, \dfrac{\omega^{2} \, \rho_{1}}{k_{1}}  \;  \frac{Rest_{m}}{\alpha}.
\end{equation*}
Then, thanks to \textbf{Lemma \ref{MTR}}, the fact that $k_{1} \sim a^{2}$ and $\alpha \sim  a^{1-h}$, the following estimation holds,  
\begin{equation*}
    \left( \sum_{j=1}^{M} \left\vert  Y_{j}  -  \tilde{Y}_{j} \right\vert^{2} \right)^{\frac{1}{2}} \, \lesssim \, a^{h-3} \, \left( \sum_{j=1}^{M} \left\vert  Rest_{j} \right\vert^{2} \right)^{\frac{1}{2}} \, \overset{(\ref{ell2-norm-Rest})}{\lesssim} \, a^{3+h} \, \left\Vert g \right\Vert_{\mathbb{H}^{-\frac{1}{2}}(\partial \Omega)}.
\end{equation*}
Hence, by plugging the above estimation into $(\ref{EstQ})$ and using the fact that $M \sim a^{h-1}$, we obtain
\begin{equation*}
    \left\vert \bm{Q} \right\vert \,  \lesssim  \, P^{2} \,  a^{\frac{11(1-h)}{6}} \;   \left\Vert f \right\Vert_{\mathbb{H}^{-\frac{1}{2}}(\partial \Omega)} \,   \left\Vert g \right\Vert_{\mathbb{H}^{-\frac{1}{2}}\left( \partial \Omega \right)} .
\end{equation*}
Taking the modulus in both sides of $(\ref{DTBN})$, using the above estimation, we get
\begin{eqnarray*}
\left\vert \bm{J} \right\vert & \lesssim & \left\vert \alpha \right\vert \,  \left(\sum_{j=1}^{M} \left\vert \tilde{Y}_{j}  -  Y(z_{j}) \right\vert^{2} \right)^{\frac{1}{2}} \; \left( \sum_{j=1}^{M} \left\vert p^{f}(z_{j}) \right\vert^{2} \right)^{\frac{1}{2}} +  a^{\frac{(1-h)}{3}} \, P^{2} \, \left\Vert f \right\Vert_{\mathbb{H}^{-\frac{1}{2}}(\partial \Omega)}  \, \left\Vert g \right\Vert_{\mathbb{H}^{-\frac{1}{2}}(\partial \Omega)} \\
& \overset{(\ref{PfMf})}{\lesssim} & \left\vert \alpha \right\vert \, M^{\frac{1}{2}} \, \left(\sum_{j=1}^{M} \left\vert \tilde{Y}_{j}  -  Y(z_{j}) \right\vert^{2} \right)^{\frac{1}{2}} \; \left\Vert f \right\Vert_{\mathbb{H}^{-\frac{1}{2}}(\partial \Omega)} +  a^{\frac{(1-h)}{3}} \, P^{2} \, \left\Vert f \right\Vert_{\mathbb{H}^{-\frac{1}{2}}(\partial \Omega)}  \, \left\Vert g \right\Vert_{\mathbb{H}^{-\frac{1}{2}}(\partial \Omega)}.
\end{eqnarray*}
Noticing that $M \sim a^{h-1}$, using the fact that $ \alpha  \sim P^{2} \, a^{1-h}$, see  $(\ref{EstimationRealpha})$, and taking into account the estimation derived in $(\ref{maxY-Y})$, we obtain: 
\begin{eqnarray}\label{ZMEqua1030}
\nonumber
\left\vert \bm{J} \right\vert  & \lesssim &  a^{\frac{(1-h) (9 - 5 \delta)}{18(3-\delta)}} \, P^{6} \, \left\Vert f \right\Vert_{\mathbb{H}^{-\frac{1}{2}}(\partial \Omega)}  \, \left\Vert g \right\Vert_{\mathbb{H}^{-\frac{1}{2}}(\partial \Omega)} +  a^{\frac{(1-h)}{3}}  \, P^{2} \, \left\Vert f \right\Vert_{\mathbb{H}^{-\frac{1}{2}}(\partial \Omega)}  \, \left\Vert g \right\Vert_{\mathbb{H}^{-\frac{1}{2}}(\partial \Omega)} \\ 
& = & \qquad \mathcal{O}\left( a^{\frac{(1-h) (9 - 5 \delta)}{18 (3-\delta)}} \, P^{6} \, \left\Vert f \right\Vert_{\mathbb{H}^{-\frac{1}{2}}(\partial \Omega)}  \, \left\Vert g \right\Vert_{\mathbb{H}^{-\frac{1}{2}}(\partial \Omega)} \right).
\end{eqnarray}
Now, by gathering $(\ref{Lambda-d--Lambda-P}),(\ref{DefJ})$ and the estimation $(\ref{ZMEqua1030})$, we obtain:   
\begin{equation*}
\left\vert \langle \left( \Lambda_{D} - \Lambda_{P} \right)(f); g \rangle_{\mathbb{H}^{\frac{1}{2}}(\partial \Omega) \times \mathbb{H}^{-\frac{1}{2}}(\partial \Omega)} \right\vert \, = \, \left\vert \bm{J} \right\vert \, \overset{(\ref{ZMEqua1030})}{\lesssim} \, a^{\frac{(1-h) (9 - 5 \delta)}{18 (3-\delta)}} \, P^{6} \, \, \left\Vert f \right\Vert_{\mathbb{H}^{-\frac{1}{2}}(\partial \Omega)}  \, \left\Vert g \right\Vert_{\mathbb{H}^{-\frac{1}{2}}(\partial \Omega)}.
\end{equation*}
This suggest, 
\begin{equation*}
\left\Vert \Lambda_{D} - \Lambda_{P} \right\Vert_{\mathcal{L}(\mathbb{H}^{-\frac{1}{2}}(\partial \Omega);\mathbb{H}^{\frac{1}{2}}(\partial \Omega))} \lesssim a^{\frac{(1-h) (9 - 5 \delta)}{18 (3-\delta)}} \, P^{6} \,.
\end{equation*}
This proves $(\ref{energy-D})$ and ends the proof of \textbf{Theorem \ref{principal-Thm}}.
\addtocontents{toc}{\setcounter{tocdepth}{1}}
\section{Appendix - Proofs of auxiliary results}\label{Appendix}
This section is organized as follows. We start by proving \textbf{Lemma \ref{MTR}} related to the invertibility of the algebraic system $(\ref{0820})$. Then, we prove \textbf{Lemma \ref{LemmaNp}} related to the smallness of the Newtonian operator $N^{p}(\cdot)$ with respect to the parameter $P$. Next, it is important to first examine the proof of \textbf{Lemma \ref{LemmaG=phi+Remainder}}, on the analysis of the Green's kernel decomposition $G(\cdot;\cdot) = \Phi_{0}(\cdot;\cdot) \, + \, \mathcal{R}(\cdot;\cdot)$, before moving on to the proof of \textbf{Lemma \ref{ADZ-Lemma}}, giving us an a priori estimation satisfied by the acoustic field $v^{g}(\cdot)$.  
Later, we examine the proof of \textbf{Lemma \ref{EstimationRealphaImalpha}}, which gives us an estimation of the scattering coefficient $\alpha$.  Finally, we conclude this section by proving \textbf{Lemma \ref{Lemma51}}.
\subsection{Proof of Lemma \ref{MTR}}\label{SectionInjective}
This subsection's objective is to prove the invertibility of the algebraic system $(\ref{0820})$. To accomplish this, we link it to a continuous integral equation, for which we demonstrate its invertibility through variational formulation techniques. As a result, the algebraic system $(\ref{0820})$ can be inverted.
From $(\ref{0820})$, we have
\begin{equation*}\label{Equa1138}
Y_{m} \, + \, P^{2} \,  \sum_{j = 1 \atop j \neq m}^{M} G(z_{m};z_{j}) \; a^{1-h} \; \frac{1}{\beta_{j}} \, Y_{j} = S(z_{m})   + \dfrac{\omega^{2} \, \rho_{1}}{k_{1} \, \alpha}  \; Rest_{m}.
\end{equation*}
where $Y_{m}$ is defined by $(\ref{DefYj})$ and $Rest_{m}$ is given by $
(\ref{DefRestm})$. 
Then, by 
using the fact that $\left\vert \Omega_{j} \right\vert = a^{1-h}$, for $1 \leq j \leq M$, we 
rewrite the previous equation as 
\begin{eqnarray*}
Y_{m} \; + \; P^{2} \, \sum_{j = 1 \atop j \neq m}^{M} G(z_{m};z_{j}) \;  \left\vert \Omega_{j} \right\vert \; \frac{1}{\beta_{j}} \, Y_{j} \; &=&  S(z_{m}) + \dfrac{\omega^{2} \, \rho_{1}}{k_{1} \, \alpha}  \; Rest_{m} \\
Y_{m} \; + \;   P^{2} \; \sum_{j = 1 \atop j \neq m}^{M} \int_{\Omega} G(z_{m};z_{j}) \,  \chi_{\Omega_{j}}(x) \; \frac{1}{\beta_{j}} \,  Y_{j} \; dx &=& S(z_{m}) + \dfrac{\omega^{2} \, \rho_{1}}{k_{1} \, \alpha}  \; Rest_{m}.
\end{eqnarray*}
Multiplying the two sides of the previous equation with $\chi_{\Omega_{m}}(\cdot)$ and summing up with respect to the index $m$, we get:
\begin{eqnarray*}
\sum_{m=1}^{M} \chi_{\Omega_{m}}(\cdot) Y_{m} \; +  \, P^{2} \; \sum_{m=1}^{M} \, \chi_{\Omega_{m}}(\cdot) \sum_{j = 1 \atop j \neq m}^{M} \int_{\Omega} G(z_{m};z_{j}) \,  \chi_{\Omega_{j}}(x) \, \frac{1}{\beta_{j}} \; Y_{j} \; dx &=&  \sum_{m=1}^{M} \, \chi_{\Omega_{m}}(\cdot) S(z_{m}) \\
&+& \dfrac{\omega^{2} \, \rho_{1}}{k_{1} \, \alpha}  \; \sum_{m=1}^{M} \chi_{\Omega_{m}}(\cdot) Rest_{m}.
\end{eqnarray*}
which can be rewritten using the notations
\begin{equation}\label{DefYDefS}
    \boldsymbol{Y}(\cdot) := \sum_{m=1}^{M} \chi_{\Omega_{m}}(\cdot) Y_{m} \;\; \text{;} \;\; \boldsymbol{S}(\cdot) := \sum_{m=1}^{M} \chi_{\Omega_{m}}(\cdot) S(z_{m}) \;\; \text{and} \;\; \boldsymbol{R}(\cdot) := \sum_{m=1}^{M} \chi_{\Omega_{m}}(\cdot) Rest_{m},
\end{equation}
as
\begin{equation}\label{DHB}
\boldsymbol{Y}(\cdot) \; + \;  \, P^{2} \; \sum_{m=1}^{M} \, \chi_{\Omega_{m}}(\cdot) \sum_{j = 1 \atop j \neq m}^{M} \int_{\Omega} G(z_{m};z_{j}) \,  \chi_{\Omega_{j}}(x) \; \frac{1}{\beta_{j}} \,  Y_{j} \; dx = \boldsymbol{S}(\cdot) + \dfrac{\omega^{2} \, \rho_{1}}{k_{1} \, \alpha}  \; \boldsymbol{R}(\cdot). 
\end{equation}
The goal of the next lemma is to prove that the second term on the L.H.S converges, in $\mathbb{L}^{2}(\Omega)$, to a function which belongs to the range of the Newtonian operator $\mathscr{N}(\cdot)$, see $(\ref{DefNewG})$ for its definition. 
\begin{lemma}\label{Lemma51}
We set
\begin{equation}\label{DefT1}
    T_{1}(\cdot) \, := \, \mathscr{N}\left( \boldsymbol{Y} \right)(\cdot) - \sum_{m=1}^{M} \, \chi_{\Omega_{m}}(\cdot) \sum_{j = 1 \atop j \neq m}^{M} \int_{\Omega} G(z_{m};z_{j}) \,  \chi_{\Omega_{j}}(x) \; \frac{1}{\beta_{j}} \,  Y_{j} \; dx, \;\; \text{in} \;\; \Omega,
\end{equation}
where $\mathscr{N}(\cdot)$ is the Newtonian operator defined by $(\ref{DefNewG})$. Then, we have the following estimation
    \begin{equation}\label{LHSRHS}
      \left\Vert   T_{1}  \right\Vert_{\mathbb{L}^{2}(\Omega)} \lesssim a^{\frac{1}{6}(1-h)} \, \left\Vert \boldsymbol{Y} \right\Vert_{\mathbb{L}^{2}(\Omega)}.  
    \end{equation}
\end{lemma}
\begin{proof}
See \textbf{Subsection \ref{Prooflemma51}}. 
\end{proof}
Thanks to the previous lemma, we rewrite $(\ref{DHB})$ as    
\begin{equation}\label{IEI} 
\left( I  \; +  \,  P^{2} \, \mathscr{N} \right) \, \left( \boldsymbol{Y} \right) (\cdot) \, =  \, \boldsymbol{S}(\cdot) +  \boldsymbol{r}(\cdot), \quad \text{in} \; \Omega,
\end{equation}
where $\boldsymbol{S}(\cdot)$ is the function given by $(\ref{DefYDefS})$, and $\boldsymbol{r}(\cdot)$ is the function defined by 
\begin{equation}\label{Equa0543}
\boldsymbol{r}(\cdot) \, := \, \dfrac{\omega^{2} \, \rho_{1}}{k_{1} \, \alpha}  \; \boldsymbol{R}(\cdot)  +  \, P^{2} \, T_{1}(\cdot),
\end{equation}
with $\boldsymbol{R}(\cdot)$ as the function given by $(\ref{DefYDefS})$, and $T_{1}(\cdot)$ is the function defined by $(\ref{DefT1})$. 
Then, in the distributional sense, we have from $(\ref{IEI})$
\begin{equation}\label{MANEKKZM}
\left( \Delta \, + \, \omega^{2} \, n^{2} \, -  \, P^{2} \right)\left( \boldsymbol{Y} \right) \, = \, \left( \Delta \, + \, \omega^{2} \, n^{2}  \right)\left( \boldsymbol{S} \, + \, \boldsymbol{r} \right) \, =: \, \mathfrak{f}, \quad \text{in} \quad \Omega. 
\end{equation}
As by construction, see $(\ref{DefYDefS})$, we have $\boldsymbol{Y}(\cdot) \, = \, 0$ near $\partial \Omega$, then the equation $(\ref{MANEKKZM})$ can be stated in $\mathbb{R}^{3}$ by extending $\boldsymbol{Y}(\cdot)$ and $\mathfrak{f}(\cdot)$ by zero in $\mathbb{R}^{3} \setminus \Omega$. Keeping the same notations for $\boldsymbol{Y}(\cdot)$ and $\mathfrak{f}(\cdot)$ with their extensions to $\mathbb{R}^{3}$, we have    
\begin{equation*}
     \Delta \left( \boldsymbol{Y} \right) \, = \, \left( - \, \omega^{2} \, n^{2} \, +  \, P^{2} \right)\left( \boldsymbol{Y} \right) \, + \, \mathfrak{f}, \quad \text{in} \quad \mathbb{R}^{3}, 
\end{equation*}
with $\mathfrak{f} \in \mathbb{H}^{-2}_{comp}\left( \mathbb{R}^{3} \right)$. Therefore, 
\begin{equation}\label{CBEqua0410}
      \boldsymbol{Y}  \, + \, N_{\mathbb{R}^{3}}\left(\left(  P^{2} \, - \, \omega^{2} \, n^{2}  \right)\left( \boldsymbol{Y} \right)\right) \, = \, - \, N_{\mathbb{R}^{3}}\left(\mathfrak{f}\right), \quad \text{in} \quad \mathbb{L}^{2}\left(\mathbb{R}^{3}\right), 
\end{equation}
with $N_{\mathbb{R}^{3}}\left(\cdot\right)$ is the Newtonian operator defined by $(\ref{DefNPO})$. To study the existence and uniqueness of the solution corresponding to $(\ref{CBEqua0410})$, we start by multiplying its both sides by the function $\left( P^{2} \, - \, \omega^{2} \, n^{2}  \right) \, > \, 0$, for $P\gg 1$, to get 
\begin{equation}\label{CBEqua0410.}
      \left(  P^{2} \, - \, \omega^{2} \, n^{2}  \right) \, \boldsymbol{Y}  \, + \, \left(   P^{2} \, - \, \omega^{2} \, n^{2}  \right) \, N_{\mathbb{R}^{3}}\left(\left(  P^{2} \, - \, \omega^{2} \, n^{2}  \right)\left( \boldsymbol{Y} \right)\right) \, = \, - \, \left(  P^{2} \, - \, \omega^{2} \, n^{2}  \right) \, N_{\mathbb{R}^{3}}\left(\mathfrak{f}\right),
\end{equation}
in $\mathbb{L}^{2}\left(\mathbb{R}^{3}\right)$. Next, by taking the $\mathbb{L}^{2}(\mathbb{R}^{3})$-inner product on both sides of $(\ref{CBEqua0410.})$, we get 
\begin{equation}\label{gimel1=gimel2}
    \gimel_{1}\left(\boldsymbol{Y}; \boldsymbol{Z} \right) \, =  \,     \gimel_{2}\left(\boldsymbol{Z} \right),  
\end{equation}
where the bilinear form $    \gimel_{1}\left(\cdot; \cdot \right)$ is given by 
\begin{eqnarray*}
        \gimel_{1}\left(\boldsymbol{Y}; \boldsymbol{Z} \right) \, &:=& \, \langle \sqrt{P^{2} \, - \, \omega^{2} \, n^{2}} \boldsymbol{Y}; \sqrt{P^{2} \, - \, \omega^{2} \, n^{2}} \boldsymbol{Z} \rangle_{\mathbb{L}^{2}\left( \mathbb{R}^{3}\right)} \\ &+&
        \langle N_{\mathbb{R}^{3}}\left(\left(P^{2} \, - \, \omega^{2} \, n^{2} \right) \, \boldsymbol{Y} \right); \left( P^{2} \, - \, \omega^{2} \, n^{2} \right) \,  \boldsymbol{Z} \rangle_{\mathbb{L}^{2}\left( \mathbb{R}^{3}\right)},
\end{eqnarray*}
and the bilinear form $    \gimel_{2}\left(\cdot \right)$ is given by 
\begin{equation*}
     \gimel_{2}\left(\boldsymbol{Z} \right) \, := \, - \, 
        \langle N_{\mathbb{R}^{3}}\left( \mathfrak{f} \right); \left(P^{2} \, - \, \omega^{2} \, n^{2} \right) \,  \boldsymbol{Z} \rangle_{\mathbb{L}^{2}\left( \mathbb{R}^{3}\right)}.
\end{equation*}
We see that $\gimel_{1}\left(\cdot, \cdot \right)$ is continuous and admits the following estimation
\begin{equation*}
    \left\vert \gimel_{1}\left(\boldsymbol{Y}, \boldsymbol{Z} \right) \right\vert \, \leq \, \left\Vert \boldsymbol{Y} \right\Vert_{\mathbb{L}^{2}(\mathbb{R}^{3})} \, \left\Vert \boldsymbol{Z} \right\Vert_{\mathbb{L}^{2}(\mathbb{R}^{3})} \, \left\Vert  P^{2} - \omega^{2} \, n^{2} \right\Vert_{\mathbb{L}^{\infty}(\mathbb{R}^{3})} \, \left(1 \, + \left\Vert  P^{2} - \omega^{2} \, n^{2} \right\Vert_{\mathbb{L}^{\infty}(\mathbb{R}^{3})} \, \left\Vert N_{\mathbb{R}^{3}} \right\Vert_{\mathcal{L}}  \right).
\end{equation*}
Besides, $\gimel_{1}\left(\cdot, \cdot \right)$ is coercive satisfying 
\begin{eqnarray}\label{gimel1est}
\nonumber
    \gimel_{1}\left(\boldsymbol{Y}, \boldsymbol{Y} \right) \, &=& \, \left\Vert \sqrt{  P^{2} \, - \, \omega^{2} \, n^{2}} \, \boldsymbol{Y} \right\Vert^{2}_{\mathbb{L}^{2}(\mathbb{R}^{3})} \, + \, \langle N_{\mathbb{R}^{3}}\left( \left( P^{2} \, - \, \omega^{2} \, n^{2} \right) \, \boldsymbol{Y} \right); \left(P^{2} \, - \, \omega^{2} \, n^{2} \right) \,  \boldsymbol{Z} \rangle_{\mathbb{L}^{2}\left( \mathbb{R}^{3}\right)} \\
    & \geq & \left\Vert \sqrt{ P^{2} \, - \, \omega^{2} \, n^{2}} \, \boldsymbol{Y} \right\Vert^{2}_{\mathbb{L}^{2}(\mathbb{R}^{3})} \, \geq \, \underset{\mathbb{R}^{3}}{\text{Inf}}\left(P^{2} \, - \, \omega^{2} \, n^{2} \right) \, \left\Vert\boldsymbol{Y} \right\Vert^{2}_{\mathbb{L}^{2}(\mathbb{R}^{3})}, 
\end{eqnarray}
where the before last estimation is due to the positivity of the Newtonian operator. In addition, the linear form $\gimel_{2}\left( \cdot \right)$ is continuous and satisfy the following estimation, 
\begin{equation}\label{gimel2est}
   \left\vert \gimel_{2}\left( \boldsymbol{Z} \right) \right\vert \, \leq \, \left\Vert N_{\mathbb{R}^{3}}\left( \mathfrak{f} \right) \right\Vert_{\mathbb{L}^{2}(\mathbb{R}^{3})} \,  \left\Vert  P^{2} - \omega^{2} \, n^{2} \right\Vert_{\mathbb{L}^{\infty}(\mathbb{R}^{3})} \, \left\Vert \boldsymbol{Z} \right\Vert_{\mathbb{L}^{2}(\mathbb{R}^{3})}. 
\end{equation}
Hence, thanks to Lax-Milgram theorem, we deduce the existence and uniqueness of the solution corresponding to $(\ref{CBEqua0410})$. Furthermore, by gathering $(\ref{gimel1=gimel2}), (\ref{gimel1est})$ and $(\ref{gimel2est})$, we derive the following estimation
\begin{equation}\label{SAK}
    \underset{\mathbb{R}^{3}}{Inf}\left(   P^{2} \, - \, \omega^{2} \, n^{2} \right) \, \left\Vert\boldsymbol{Y} \right\Vert_{\mathbb{L}^{2}(\mathbb{R}^{3})} \, \leq \, \left\Vert N_{\mathbb{R}^{3}}\left( \mathfrak{f} \right) \right\Vert_{\mathbb{L}^{2}(\mathbb{R}^{3})} \,  \left\Vert   P^{2} - \omega^{2} \, n^{2} \right\Vert_{\mathbb{L}^{\infty}(\mathbb{R}^{3})}.
\end{equation}
Besides, as $P^{2} \, \gg \, 1$, we have  
\begin{equation*}
\underset{\mathbb{R}^{3}}{Inf}\left(P^{2} \, - \, \omega^{2} \, n^{2} \right) \, \sim \, P^{2} \quad \text{and} \quad \left\Vert P^{2} \, - \, \omega^{2} \, n^{2} \right\Vert_{\mathbb{L}^{\infty}(\mathbb{R}^{3})} \, \sim \, P^{2}, 
\end{equation*}
which, by plugging it into $(\ref{SAK})$, gives us 
\begin{equation*}
     \left\Vert\boldsymbol{Y} \right\Vert_{\mathbb{L}^{2}(\mathbb{R}^{3})} \, \leq \, \left\Vert N_{\mathbb{R}^{3}}\left( \mathfrak{f} \right) \right\Vert_{\mathbb{L}^{2}(\mathbb{R}^{3})} \, \lesssim \, \left\Vert  \mathfrak{f}  \right\Vert_{\mathbb{H}^{-2}(\mathbb{R}^{3})},
\end{equation*}
where we have used the continuity of the Newtonian operator to derive the last estimation. Knowing that $\boldsymbol{Y}(\cdot) \, = \, 0$, in $\mathbb{R}^{3} \setminus \overline{\Omega} $, and $\mathfrak{f}(\cdot) \, = \, 0$, in $\mathbb{R}^{3} \setminus \overline{\Omega} $, we deduce  
\begin{equation*}
     \left\Vert\boldsymbol{Y} \right\Vert_{\mathbb{L}^{2}(\Omega)} \,  \, \lesssim \, \left\Vert  \mathfrak{f}  \right\Vert_{\mathbb{H}^{-2}(\Omega)} \, \overset{(\ref{MANEKKZM})}{:=} \, \left\Vert \left(\Delta \, + \, \omega^{2} \, n^{2}  \right)\left( \boldsymbol{S}+\boldsymbol{r} \right) \right\Vert_{\mathbb{H}^{-2}(\Omega)}, 
\end{equation*}
which, by keeping the dominant part on the right hand side, can be reduced to  
\begin{equation*}
\left\Vert \boldsymbol{Y} \right\Vert_{\mathbb{L}^{2}(\Omega)}
        \, \lesssim  \, \left\Vert \Delta   \left( \boldsymbol{S}+\boldsymbol{r}  \right) \right\Vert_{\mathbb{H}^{-2}(\Omega)} \, \leq \, \left\Vert \Delta  \right\Vert_{\mathcal{L}\left(\mathbb{L}^{2}(\Omega);\mathbb{H}^{-2}(\Omega)\right)} \, \left\Vert  \boldsymbol{S}+\boldsymbol{r}   \right\Vert_{\mathbb{L}^{2}(\Omega)}.
\end{equation*}
Thus, the following inequalities hold, 
\begin{eqnarray*}
\left\Vert \boldsymbol{Y} \right\Vert_{\mathbb{L}^{2}(\Omega)}
        \, & \lesssim & \, \left\Vert  \boldsymbol{S}   \right\Vert_{\mathbb{L}^{2}(\Omega)} \, + \, \left\Vert \boldsymbol{r}   \right\Vert_{\mathbb{L}^{2}(\Omega)} \\ & \overset{(\ref{Equa0543})}{\lesssim} & \, \left\Vert  \boldsymbol{S}   \right\Vert_{\mathbb{L}^{2}(\Omega)} \, + \, \dfrac{1}{\left\vert k_{1} \, \alpha \right\vert} \, \left\Vert \boldsymbol{R}  \right\Vert_{\mathbb{L}^{2}(\Omega)} \, + \,  P^{2} \, \left\Vert T_{1} \right\Vert_{\mathbb{L}^{2}(\Omega)} \\
        &\overset{(\ref{LHSRHS})}{\lesssim}& \, \left\Vert  \boldsymbol{S}   \right\Vert_{\mathbb{L}^{2}(\Omega)} \, + \, \dfrac{1}{\left\vert k_{1} \, \alpha \right\vert} \, \left\Vert \boldsymbol{R}  \right\Vert_{\mathbb{L}^{2}(\Omega)} \, + \,  P^{2} \, a^{\frac{1}{6}(1-h)} \, \left\Vert \boldsymbol{Y} \right\Vert_{\mathbb{L}^{2}(\Omega)},
\end{eqnarray*}
which, under the fact that $P^{2} \, a^{\frac{1}{6}(1-h)} \, \ll \, 1$, as $a \ll 1$, which is satisfied because of (\ref{energy-D}) (or (\ref{M-P})), can be reduced to 
\begin{equation*}
    \left\Vert \boldsymbol{Y} \right\Vert_{\mathbb{L}^{2}(\Omega)} \, \lesssim \, \left\Vert  \boldsymbol{S}   \right\Vert_{\mathbb{L}^{2}(\Omega)} \, + \, \dfrac{1}{\left\vert k_{1} \, \alpha \right\vert} \, \left\Vert \boldsymbol{R}  \right\Vert_{\mathbb{L}^{2}(\Omega)}.
\end{equation*}
Now, using the fact that $k_{1} \sim a^{2}$, see $(\ref{ScaleBulk})$, and the estimation of $\alpha \sim a^{1-h}$, see $(\ref{EstimationRealpha})$, we deduce
\begin{equation*}
\left\Vert \boldsymbol{Y} \right\Vert_{\mathbb{L}^{2}(\Omega)}
        \,   \lesssim  \, \left\Vert  \boldsymbol{S}   \right\Vert_{\mathbb{L}^{2}(\Omega)} \, + \, a^{(h-3)} \, \left\Vert \boldsymbol{R}  \right\Vert_{\mathbb{L}^{2}(\Omega)} \, \overset{(\ref{SBMBR+})}{\lesssim}  \, \left\Vert  \boldsymbol{S}   \right\Vert_{\mathbb{L}^{2}(\Omega)} \, + \, a^{\frac{(1+h)}{2}} \, \left\Vert g \right\Vert_{\mathbb{H}^{-\frac{1}{2}}\left( \partial \Omega \right)}, 
\end{equation*}
which, by using $(\ref{DefYDefS})$ and $(\ref{NMP1})$, can be reduced to 
\begin{equation}\label{Eq1338}
      \left\Vert \boldsymbol{Y} \right\Vert_{\mathbb{L}^{2}(\Omega)} \, \lesssim  \, a^{\frac{(1-h)}{2}} \, \sqrt{\sum_{m=1}^{M}  \left\vert \mathscr{S}(g)(z_{m}) \right\vert^{2}} \, + \, a^{\frac{(1+h)}{2}} \, \left\Vert g \right\Vert_{\mathbb{H}^{-\frac{1}{2}}\left( \partial \Omega \right)}. 
\end{equation}
Let us estimate $\displaystyle \sum_{m=1}^{M}  \left\vert \mathscr{S}(g)(z_{m}) \right\vert^{2}$. To do this, we have 
\begin{eqnarray*}\label{Equa0810}
\nonumber
    \sum_{m=1}^{M} \left\vert \mathscr{S}(g)(z_{m}) \right\vert^{2} \, & \leq & \, \left\Vert g \right\Vert^{2}_{\mathbb{H}^{-\frac{1}{2}}(\partial \Omega)} \, \sum_{m=1}^{M} \left\Vert G(z_{m},\cdot) \right\Vert^{2}_{\mathbb{H}^{\frac{1}{2}}(\partial \Omega)} \\ \nonumber & \overset{(\ref{Equa1014IP})}{\leq} & 
    \, \left\Vert g \right\Vert^{2}_{\mathbb{H}^{-\frac{1}{2}}(\partial \Omega)} \, \sum_{m=1}^{M} \left\Vert G(z_{m},\cdot) \right\Vert^{2}_{\mathbb{H}^{1}(\Omega^{\diamond})} \\ \nonumber
    & \lesssim & 
    \, \left\Vert g \right\Vert^{2}_{\mathbb{H}^{-\frac{1}{2}}(\partial \Omega)} \, \sum_{m=1}^{M} \frac{1}{\dist^{4}(D_{m};\partial \Omega)}  \\ \nonumber
    & \overset{(\ref{SID+})}{\lesssim}  &  
    \, \left\Vert g \right\Vert^{2}_{\mathbb{H}^{-\frac{1}{2}}(\partial \Omega)} \, d^{-4} \\
    &\overset{(\ref{dmin})}{=}& \mathcal{O}\left( \left\Vert g \right\Vert^{2}_{\mathbb{H}^{-\frac{1}{2}}(\partial \Omega)} \, a^{\frac{-4(1-h)}{3}} \right).
\end{eqnarray*}
Then, plugging the above estimation into $(\ref{Eq1338})$,
\begin{equation*}
\left\Vert \boldsymbol{Y} \right\Vert_{\mathbb{L}^{2}(\Omega)}
        \,   \lesssim   \, a^{\frac{(h-1)}{6}} \, \left\Vert g \right\Vert_{\mathbb{H}^{-\frac{1}{2}}\left( \partial \Omega \right)}. 
\end{equation*}
In addition, as by construction, 
\begin{equation}\label{Equa0714}
\Vert \boldsymbol{Y} \Vert^2_{\mathbb{L} ^{2}(\Omega)} = \sum_{m=1}^{M} \vert Y_m\vert^2\vert \Omega_m\vert = \left\vert \Omega_{m_{0}} \right\vert \; \sum_{m=1}^{M} \left\vert Y_{m} \right\vert^{2},     
\end{equation}
we deduce the estimate
\begin{equation}\label{Equa0454}
    \left( \sum_{m=1}^{M} \left\vert Y_{m} \right\vert^{2} \right)^{\frac{1}{2}} \; \lesssim  \; a^{\frac{2(h-1)}{3}} \; \left\Vert g \right\Vert_{\mathbb{H}^{-\frac{1}{2}}\left( \partial \Omega \right)}.
\end{equation} 
This implies the injectivity of $(\ref{IEI})$. In addition, it is known that any injective linear map between two finite dimensional vector spaces of the same dimension is surjective. This proves the surjectivity and, consequently, the bijectivity of $(\ref{IEI})$. Hence, we have also the invertibility of the algebraic system $(\ref{0820})$.
This concludes the proof of \textbf{Lemma \ref{MTR}}. 
\subsection{Proof of Lemma \ref{LemmaNp}}\label{AS2331}
From the spectral theory, we have
\begin{equation*}
\left\Vert N^{p} \right\Vert_{\mathcal{L}\left(\mathbb{L}^{2}(\Omega);\mathbb{L}^{2}(\Omega)\right)} = \left\Vert \mathcal{R}(P^{2};\Delta) \right\Vert_{\mathcal{L}\left(\mathbb{L}^{2}(\Omega);\mathbb{L}^{2}(\Omega)\right)} \leq \frac{1}{\dist\left(P^{2};\sigma(\Delta) \right)},
\end{equation*} 
where $\sigma(\Delta)$ stands for the spectrum of the Neumann-Laplacian operator in $\mathbb{L}^{2}(\Omega)$. It is known that $\sigma(\Delta) := \left\{ \mu_{n} \right\}_{n \geq 1}$ such that $ 0 = \mu_{1} > \mu_{2} > \mu_{3} > \cdots \rightarrow - \infty$. Hence, we get $\dist\left(P^{2};\sigma(\Delta) \right) \, = \, P^{2}$. Consequently, 
\begin{equation*}
\left\Vert N^{p} \right\Vert_{\mathcal{L}\left(\mathbb{L}^{2}(\Omega);\mathbb{L}^{2}(\Omega)\right)}  \leq \frac{1}{P^{2}}.
\end{equation*}
This proves $(\ref{NormNewtonian})$. To prove $(\ref{TraceNormNewtonian})$, we start by remarking that 
for an arbitrary function $f \in \mathbb{L}^{2}(\Omega)$, the function $N^{p}(f)$ satisfies the problem 
\begin{align*}
\begin{cases}  
\left(\Delta  - P^{2} \, I \right) N^{p}(f)  = - \, f \quad \text{in} \quad  \Omega,  \\ 
\qquad \quad \quad \partial_{\nu} N^{p}(f) = \quad 0 \;\;\; \,  \text{on} \quad \partial \Omega. 
\end{cases}
\end{align*}
Multiplying both sides of the first equation by $N^{p}(f)$ and integrating in $\Omega$, we get
\begin{eqnarray*}
\left\Vert \nabla N^{p}(f) \right\Vert^{2}_{\mathbb{L}^{2}(\Omega)} & \leq & P^{2} \, \left\Vert N^{p}(f) \right\Vert^{2}_{\mathbb{L}^{2}(\Omega)} + \left\Vert f \right\Vert_{\mathbb{L}^{2}(\Omega)} \; \left\Vert N^{p}(f) \right\Vert_{\mathbb{L}^{2}(\Omega)} \\
& \leq & P^{2} \, \left\Vert N^{p} \right\Vert^{2}_{\mathcal{L}\left(\mathbb{L}^{2}(\Omega);\mathbb{L}^{2}(\Omega)\right)} \, \left\Vert f \right\Vert^{2}_{\mathbb{L}^{2}(\Omega)} + \left\Vert f \right\Vert^{2}_{\mathbb{L}^{2}(\Omega)} \; \left\Vert N^{p} \right\Vert_{\mathcal{L}\left(\mathbb{L}^{2}(\Omega);\mathbb{L}^{2}(\Omega)\right)}.
\end{eqnarray*}
Hence,
\begin{equation}\label{EstimationnablaN}
\left\Vert \nabla N^{p} \right\Vert^{2}_{\mathcal{L}\left(\mathbb{L}^{2}(\Omega);\mathbb{L}^{2}(\Omega)\right)} 
 \leq  P^{2} \, \left\Vert N^{p} \right\Vert^{2}_{\mathcal{L}\left(\mathbb{L}^{2}(\Omega);\mathbb{L}^{2}(\Omega)\right)}  +  \left\Vert N^{p} \right\Vert_{\mathcal{L}\left(\mathbb{L}^{2}(\Omega);\mathbb{L}^{2}(\Omega)\right)} \overset{(\ref{NormNewtonian})}{=} \mathcal{O}\left( \frac{1}{P^{2}} \right).
\end{equation}
Then, 
\begin{equation*}
\left\Vert N^{p} \right\Vert_{\mathcal{L}\left(\mathbb{L}^{2}(\Omega);\mathbb{H}^{1}(\Omega)\right)} := \left[ \left\Vert N^{p} \right\Vert^{2}_{\mathcal{L}\left(\mathbb{L}^{2}(\Omega);\mathbb{L}^{2}(\Omega)\right)} + \left\Vert \nabla N^{p} \right\Vert^{2}_{\mathcal{L}\left(\mathbb{L}^{2}(\Omega);\mathbb{L}^{2}(\Omega)\right)} \right]^{\frac{1}{2}}, 
\end{equation*}
which, using $(\ref{NormNewtonian})$ and $(\ref{EstimationnablaN})$, becomes $\left\Vert N^{p} \right\Vert_{\mathcal{L}\left(\mathbb{L}^{2}(\Omega);\mathbb{H}^{1}(\Omega)\right)} = \mathcal{O}\left( \dfrac{1}{P} \right)$ 
and, by taking the trace operator we end up with the following estimation
\begin{equation*}
\left\Vert \gamma N^{p} \right\Vert_{\mathcal{L}\left(\mathbb{L}^{2}(\Omega); \mathbb{H}^{\frac{1}{2}}(\partial \Omega) \right)} = \mathcal{O}\left( \frac{1}{P} \right).
\end{equation*}
This proves $(\ref{TraceNormNewtonian})$ and ends the proof of \textbf{Lemma \ref{LemmaNp}}. 
\subsection{Proof of Lemma \ref{LemmaG=phi+Remainder}}\label{SubsectionProofLemma2.3}
Multiplying both sides of $(\ref{RemainderPDE})$ by $\Phi_{0}(\cdot,\cdot)$, solution of $(\ref{LREq1110})$, integrating by parts over the domain $\Omega$, and using the fact that 
\begin{equation*}
\partial_{\nu_{x}}\left( \mathcal{R}(x,y) \right) \, = \, - \, \partial_{\nu_{x}} \left( \Phi_{0}(x,y)\right), \qquad x \in \partial \Omega \quad \text{and} \quad y \in \Omega, 
\end{equation*}
we obtain 
\begin{eqnarray}\label{SHEqua1127}
\nonumber
    \mathcal{R}(x,y) \, &=& \, - \, DL_{\partial \Omega}\left( \mathcal{R}(\cdot,y) \right)(x) \, + \, \omega^{2} \, N_{\Omega}\left(n^{2}(\cdot) \, \mathcal{R}(\cdot,y) \right)(x) \\
    &+& \omega^{2} \, N_{\Omega}(n^{2}(\cdot) \, \Phi_{0}(\cdot,y))(x) \, - \, SL_{\partial \Omega}\left( \partial_{\nu} \left( \Phi_{0}(\cdot,y)\right) \right)(x), \quad x \in \Omega, 
\end{eqnarray}
where $y \in \Omega$ is taken as a parameter, $N_{\Omega}(\cdot)$ is the Newtonian operator defined by $(\ref{DefNPO})$, $SL_{\partial \Omega}(\cdot)$ is the Single-Layer operator defined by
\begin{equation*}
    SL_{\partial \Omega}\left(f\right)(x) \, := \, \int_{\partial \Omega} \Phi_{0}(x,y) \, f(y) \, d\sigma(y), \quad x \in \Omega,  
\end{equation*}
and $DL_{\partial \Omega}(\cdot)$ is the Double-Layer operator defined by 
\begin{equation*}
    DL_{\partial \Omega}\left(f\right)(x) \, := \, \int_{\partial \Omega} \frac{\partial \Phi_{0}(x,y)}{\partial \nu(y)} \, f(y) \, d\sigma(y), \quad x \in \Omega,
\end{equation*}
Besides, thanks to \cite[Proposition 4.3]{MitreaTaylor}, we have the following singularity analysis 
\begin{equation*}
    \left\vert G(x,y) \right\vert \, \lesssim \, \frac{1}{\left\vert x \, - \, y \right\vert}, \quad x \neq y, 
\end{equation*}
which by plugging it into the right hand side of $(\ref{SHEqua1127})$, and up to an additive uniformly bounded part, gives us 
\begin{equation}\label{SHEqua1127red}
\nonumber
    \mathcal{R}(x,y) \, \simeq \, - \, SL_{\partial \Omega}\left( \partial_{\nu} \left( \Phi_{0}(\cdot,y)\right) \right)(x), \quad x \in \Omega. 
\end{equation}
As near the boundary $\partial \Omega$, i.e. $\dist(y,\partial \Omega) \, \simeq \, \kappa(a)$ and $\dist(x,\partial \Omega) \, \simeq \, \kappa(a)$, we have 
\begin{equation*}
\left\vert \mathcal{R}(x,y) \right\vert \,  \lesssim \,  \int_{\partial \Omega} \frac{1}{\left\vert t - x \right\vert} \, \frac{1}{\left\vert t - y \right\vert^{2}} \, d\sigma(t),
\end{equation*}
which, by using the Holder inequality, gives us 
\begin{equation}\label{MGI}
    \left\vert \mathcal{R}(x,y) \right\vert \,   \lesssim  \, \left( \int_{\partial \Omega} \frac{1}{\left\vert t - x \right\vert^{3}} \, d\sigma(t) \right)^{\frac{1}{3}} \, \left( \int_{\partial \Omega}  \frac{1}{\left\vert t - y \right\vert^{3}} \, d\sigma(t) \right)^{\frac{2}{3}} \, \lesssim  \, \left( \frac{1}{\dist(x,\partial \Omega)} \right)^{\frac{1}{3}} \, \left(   \frac{1}{\dist(y,\partial \Omega)} \right)^{\frac{2}{3}},
\end{equation}
see \cite[Lemma (4.6)]{Valdivia}.
This concludes the proof of \textbf{Lemma \ref{LemmaG=phi+Remainder}}.
\subsection{Proof of Lemma \ref{ADZ-Lemma}}\label{Proof-A-Priori-Estimate}
We start by recalling, from $(\ref{L.S.E.vf})$, that $v^{g}(\cdot)$ is solution of:
\begin{equation}\label{L.S.E.vf.Lemma}
    v^{g}(x) \, - \, \omega^{2} \, \int_{D} G(x,y) \, v^{g}(y) \, \left( \frac{\rho_{1}}{k_{1}} \, - \, n^{2}(y) \right) \, dy \, = \, S(x), \quad x \in D. 
\end{equation}
In the sequel, we divide the proof into two steps. 
\begin{enumerate}
\item[]
\item The case of one droplet. 
Using the decomposition $(\ref{G=phi+Remainder})$, of the Green's kernel $G(\cdot,\cdot)$, we rewrite $(\ref{L.S.E.vf.Lemma})$ as  
\begin{eqnarray*}
v^{g}(x) \, &-& \, \omega^{2} \, \frac{\rho_{1}}{k_{1}} \; \int_{D} \Phi_{0}(x,y) \, v^{g}(y) \, dy = S(x) \\ &+& \omega^{2} \, \frac{\rho_{1}}{k_{1}} \; \int_{D} \mathcal{R}(x,y) \, v^{g}(y) \, dy \, - \, \omega^{2}  \int_{D} G(x,y) \, v^{g}(y)  \, n^{2}(y)  \, dy.
\end{eqnarray*}
Next, we denote by $\left(\lambda_{n}^{D}; e_{n} \right)_{n \in \mathbb{N}}$ the eigensystem associated to the Newtonian operator $N_{D}(\cdot)$ in $\mathbb{L}^{2}(D)$. Then, after taking the inner product with respect to $e_{n}(\cdot)$ and the square modulus in both sides of the previous equation, we get
\begin{eqnarray*}
\left\vert \langle v^{g}; e_{n} \rangle_{\mathbb{L}^{2}(D)} \right\vert^{2} \, & \lesssim & \, \frac{\left\vert k_{1} \right\vert^{2}}{\left\vert k_{1} - \omega^{2} \; \rho_{1} \, \lambda^{D}_{n} \right\vert^{2}} \, \Bigg[ \left\vert \langle S; e_{n} \rangle_{\mathbb{L}^{2}(D)} \right\vert^{2} \, + \, \left\vert \langle \int_{D} G(\cdot,y) \, n^{2}(y) \, v^{g}(y) \, dy; e_{n} \rangle_{\mathbb{L}^{2}(D)} \right\vert^{2} \\ &+&  \,\left\vert k_{1}  \right\vert^{-2} \, \left\vert \langle \int_{D} \mathcal{R}(\cdot,y) \, v^{g}(y) \, dy; e_{n} \rangle_{\mathbb{L}^{2}(D)} \right\vert^{2} \Bigg].
\end{eqnarray*}
Then, by summing up with respect to the index $n$ and taking into account the relations $(\ref{closeres})$ and $(\ref{ScaleBulk})$ we obtain 
\begin{eqnarray}\label{AMEqua0613}
\nonumber
\left\Vert v^{g} \right\Vert^{2}_{\mathbb{L}^{2}(D)}  & \lesssim & a^{-2h} \; \Bigg[ \left\Vert  S \right\Vert^{2}_{\mathbb{L}^{2}(D)}  + \, a^{-4} \, \left\Vert  \int_{D} \mathcal{R}(\cdot,y) \, v^{g}(y) \, dy \right\Vert^{2}_{\mathbb{L}^{2}(D)} \\ && \qquad \qquad \qquad + \left\Vert \int_{D} G(\cdot,y) \, n^{2}(y) \, v^{g}(y) \, dy \right\Vert^{2}_{\mathbb{L}^{2}(D)} \, \Bigg].
\end{eqnarray}
Next, we estimate the second term and the third term on the R.H.S, as 
\begin{equation*}
\bm{R_1} := \left\Vert  \int_{D} \mathcal{R}(\cdot,y) \, v^{g}(y) \, dy \right\Vert^{2}_{\mathbb{L}^{2}(D)} 
 \leq  \int_{D} \int_{D} \left\vert \mathcal{R}(x,y) \right\vert^{2} \, dy  \; dx \; \left\Vert v^{g} \right\Vert^{2}_{\mathbb{L}^{2}(D)}.
\end{equation*}
Thanks to $(\ref{MGI})$, we have 
\begin{equation*}
\int_{D} \int_{D} \left\vert \mathcal{R}(x,y) \right\vert^{2} \, dy  \; dx \; \lesssim \; \int_{D} \frac{1}{\dist^{\frac{2}{3}}(x,\partial \Omega)} \, dx \, \int_{D} \frac{1}{\dist^{\frac{4}{3}}(y,\partial \Omega)} \, dy,
\end{equation*}
which, by using the fact that $\dist(x,\partial \Omega) \, \geq \, \kappa(a)$ and $\dist(y,\partial \Omega) \, \geq \, \kappa(a)$, gives us\footnote{If the droplet $D$ is away from the boundary $\partial \Omega$, the estimation $(\ref{IEq})$, will be reduced to 
\begin{equation*}
\int_{D} \int_{D} \left\vert \mathcal{R}(x,y) \right\vert^{2} \, dy  \; dx \; = \; \mathcal{O}\left( a^{6} \right).
\end{equation*}
Thus, the estimation $(\ref{IEq})$ corresponds to the worst case.} 
\begin{equation}\label{IEq}
\int_{D} \int_{D} \left\vert \mathcal{R}(x,y) \right\vert^{2} \, dy  \; dx \; \lesssim \; \left( \frac{\left\vert D \right\vert}{\kappa(a)} \right)^{2} \, \overset{(\ref{distto})}{=} \, \mathcal{O}\left( a^{\frac{2(8+h)}{3}} \right).
\end{equation}
Hence, 
\begin{equation}\label{Rvf}
\bm{R_1} := \left\Vert  \int_{D} \mathcal{R}(\cdot,y) \, v^{g}(y) \, dy \right\Vert^{2}_{\mathbb{L}^{2}(D)} \; \lesssim \; a^{\frac{2(8+h)}{3}} \; \left\Vert v^{g} \right\Vert^{2}_{\mathbb{L}^{2}(D)}.
\end{equation}
Furthermore, 
\begin{eqnarray*}
   \bm{R_2} \, &:=& \, \left\Vert \int_{D} G(\cdot,y) \, n^{2}(y) \, v^{g}(y) \, dy \right\Vert^{2}_{\mathbb{L}^{2}(D)} \\
   &\overset{(\ref{G=phi+Remainder})}{\lesssim}& \left\Vert N_{D}\left( n^{2} \, v^{g} \right) \right\Vert^{2}_{\mathbb{L}^{2}(D)} \, + \, \left\Vert \int_{D} \mathcal{R}(\cdot,y) \, n^{2}(y) \, v^{g}(y) \, dy \right\Vert^{2}_{\mathbb{L}^{2}(D)} \\
   &\leq& \left[  \left\Vert N_{D} \right\Vert^{2}_{\mathcal{L}\left(\mathbb{L}^{2}(D); \mathbb{L}^{2}(D)\right)} \,   + \, \int_{D} \int_{D} \left\vert \mathcal{R}(x,y) \right\vert^{2} \, dy \, dx  \, \right] \left\Vert  n^{2} \right\Vert^{2}_{\mathbb{L}^{\infty}(D)}  \, \left\Vert  v^{g}  \right\Vert^{2}_{\mathbb{L}^{2}(D)},
\end{eqnarray*}
which, by using the fact that $\left\Vert  n^{2} \right\Vert_{\mathbb{L}^{\infty}(D)} \, = \, \mathcal{O}\left( 1 \right)$, $\left\Vert N_{D} \right\Vert_{\mathcal{L}\left(\mathbb{L}^{2}(D); \mathbb{L}^{2}(D)\right)} \, = \, \mathcal{O}\left(a^{2} \right)$, and the estimation $(\ref{IEq})$, can be reduced to 
\begin{equation}\label{EstR2Equa0611}
   \bm{R_2} \,  \lesssim  \, a^{4}  \, \left\Vert  v^{g}  \right\Vert^{2}_{\mathbb{L}^{2}(D)}. 
\end{equation}
Then, by plugging $(\ref{Rvf})$ and $(\ref{EstR2Equa0611})$ into $(\ref{AMEqua0613})$, we obtain 
\begin{eqnarray}\label{LREqua1044}
\nonumber
\left\Vert v^{g} \right\Vert^{2}_{\mathbb{L}^{2}(D)}  \, & \lesssim & \, a^{-2h} \; \left\Vert  S \right\Vert^{2}_{\mathbb{L}^{2}(D)} + a^{\frac{4}{3}(1-h)} \, \left\Vert  v^{g} \right\Vert^{2}_{\mathbb{L}^{2}(D)} \\
& \lesssim & \, a^{-2h} \; \left\Vert  S \right\Vert^{2}_{\mathbb{L}^{2}(D)},
\end{eqnarray}
as $0 < h < 1$ and $a \ll 1$. Besides, thanks to $(\ref{NMP1})$ we know that $S \, = \, \mathscr{S}(g)$. Hence, 
\begin{equation}\label{LGEqua1028}
\left\Vert v^{g} \right\Vert^{2}_{\mathbb{L}^{2}(D)}  \,   \lesssim  \, a^{-2h} \; \int_{D} \left\vert \mathscr{S}(g)(x)   \right\vert^{2} \, dx.
\end{equation}
Now, by using the continuity of the Single-Layer operator from $\mathbb{H}^{-\frac{1}{2}}(\partial \Omega)$ to $\mathbb{H}^{1}(\Omega)$, the continuous embedding of $\mathbb{H}^{1}(\Omega)$ into $\mathbb{L}^{6}(\Omega)$, see \cite[Corollary 9.14]{brezis}, and the Holder inequality, we deduce that
\begin{eqnarray}\label{Equa1013}
\nonumber
\left\Vert v^{g} \right\Vert^{2}_{\mathbb{L}^{2}(D)}  \,  & \lesssim &  \, a^{-2h} \, \left( \int_{D} \left\vert \mathscr{S}(g)(x)   \right\vert^{6} \, dx \, \right)^{\frac{1}{3}} \, \left\vert D \right\vert^{\frac{2}{3}} \\ \nonumber
& = &  \, a^{2-2h} \, \left( \int_{D} \left\vert \int_{\partial \Omega} G(x,y) \, g(y) \, d\sigma(y)  \right\vert^{6} \, dx \, \right)^{\frac{1}{3}}  \\ \nonumber
& = &  \, a^{2-2h} \, \left( \int_{D} \left\vert \langle G(x,\cdot); g \rangle_{\mathbb{H}^{\frac{1}{2}}(\partial \Omega) \times \mathbb{H}^{-\frac{1}{2}}(\partial \Omega)}  \right\vert^{6} \, dx \, \right)^{\frac{1}{3}}  \\
& \leq &  \, a^{2-2h} \, \left\Vert g \right\Vert^{2}_{\mathbb{H}^{-\frac{1}{2}}(\partial \Omega)} \, \left( \int_{D} \left\Vert G(x,\cdot)   \right\Vert^{6}_{\mathbb{H}^{\frac{1}{2}}(\partial \Omega)} \, dx \, \right)^{\frac{1}{3}}.
\end{eqnarray}
Repeating the same computations done in $(\ref{InPa})-(\ref{ZMEqua0445})$, we derive the following estimation
\begin{eqnarray}\label{JPTSM}
\nonumber
  \left\Vert v^{g} \right\Vert^{2}_{\mathbb{L}^{2}(D)}  \,  & \lesssim & \, a^{2-2h} \, \left\Vert g \right\Vert^{2}_{\mathbb{H}^{-\frac{1}{2}}(\partial \Omega)} \, \left( \int_{D}  \int_{\Omega^{\diamond}} \frac{1}{\left\vert x - y \right\vert^{12}} \, dy  \, dx \, \right)^{\frac{1}{3}} \\
  &\overset{(\ref{distxykappaa})}{\lesssim}&  \, a^{2-2h} \, \left\Vert g \right\Vert^{2}_{\mathbb{H}^{-\frac{1}{2}}(\partial \Omega)} \, \left( \kappa(a)^{-12} \, \left\vert D \right\vert \right)^{\frac{1}{3}} \, \overset{(\ref{distto})}{=}  \, \mathcal{O}\left(a^{\frac{(5-2h)}{3}} \, \left\Vert g \right\Vert^{2}_{\mathbb{H}^{-\frac{1}{2}}(\partial \Omega)} \right).
\end{eqnarray}
Finally, 
\begin{equation*}\label{ASEqua1305}
    \left\Vert v^{g} \right\Vert_{\mathbb{L}^{2}(D)}  \, \lesssim \, a^{\frac{(5-2h)}{6}} \, \left\Vert g \right\Vert_{\mathbb{H}^{-\frac{1}{2}}(\partial \Omega)}.
\end{equation*}
\item The case of multiple droplets. 
From $(\ref{L.S.E.vf.Lemma})$, by taking $x \in D_{m}$, we get:
 \begin{eqnarray*}
\left( I - \frac{\omega^{2} \, \rho_{1}}{k_{1}}  \; N_{D_{m}} \right)\left(v^{g}_{m}\right)(x) \, &=& \, S_{m}(x) \, + \,  \frac{\omega^{2} \, \rho_{1}}{k_{1}} \;  \sum_{j=1 \atop j \neq m}^{M} \int_{D_{j}} G(x,y) \, v^{g}_{j}(y) \, dy \\ &+&  \frac{\omega^{2} \, \rho_{1}}{k_{1}} \;  \int_{D_{m}} \mathcal{R}(x,y) \, v^{g}_{m}(y) \, dy \,
- \, \omega^{2} \, \int_{D_{m}} G(x,y) \, n^{2}(y) \, v_{m}^{g}(y) \, dy \\
&-& \omega^{2} \, \sum_{j=1 \atop j \neq m}^{M} \int_{D_{j}} G(x,y) \, n^{2}(y) \, v_{j}^{g}(y) \, dy
\end{eqnarray*} 
where $N_{D_m}(\cdot)$ is the Newtonian operator given by $(\ref{DefNPO})$. Besides, taking a Taylor expansion for the functions $G(x,\cdot)$ and $G(x,\cdot) n^{2}(\cdot)$, and inverting the operator $\left( I - \frac{\omega^{2} \, \rho_{1}}{k_{1}}  \; N_{D_{m}} \right)$, we derive
 \begin{eqnarray*}
v^{g}_{m}(x) \, &=& \, \left( I - \frac{\omega^{2} \, \rho_{1}}{k_{1}}  \; N_{D_{m}} \right)^{-1}\Bigg[ S_{m} \, + \,  \frac{\omega^{2} \, \rho_{1}}{k_{1}} \;  \sum_{j=1 \atop j \neq m}^{M} G(\cdot,z_{j}) \, \int_{D_{j}} v^{g}_{j}(y) \, dy \\ &+&   \,  \frac{\omega^{2} \, \rho_{1}}{k_{1}} \;  \sum_{j=1 \atop j \neq m}^{M} \int_{D_{j}} \int_{0}^{1} \underset{y}{\nabla} G(\cdot,z_{j}+t(y-z_{j})) \cdot (y - z_{j}) \, dt  \, v^{g}_{j}(y) \, dy \\ &+& \, \frac{\omega^{2} \, \rho_{1}}{k_{1}} \;  \int_{D_{m}} \mathcal{R}(\cdot,y) \, v^{g}_{m}(y) \, dy \, - \omega^{2} \,  \int_{D_{m}} G(\cdot,y) \, n^{2}(y) \, v_{m}^{g}(y) \, dy \\ 
&-& \omega^{2} \, \sum_{j=1 \atop j \neq m}^{M} G(\cdot,z_{j}) n^{2}(z_{j}) \, \int_{D_{j}} v_{j}^{g}(y) \, dy \\
&-& \, \omega^{2} \, \sum_{j=1 \atop j \neq m}^{M} \int_{D_{j}} \int_{0}^{1} \nabla\left(G(\cdot,\cdot)n^{2}(\cdot) \right)(z_{j}+t(y-z_{j})) \cdot (y-z_{j}) \, dt \, v_{j}^{g}(y) \, dy \Bigg](x), 
\end{eqnarray*} 
with $x \in D_{m}$. Introducing the notation $(\ref{DefYj})$, the above equation can be rewritten as 
 \begin{eqnarray*}
v^{g}_{m}(x) \, &=& \, \left( I - \frac{\omega^{2} \, \rho_{1}}{k_{1}}  \; N_{D_{m}} \right)^{-1}\Bigg[ S_{m} \, + \, \alpha  \,  \sum_{j=1 \atop j \neq m}^{M} G(\cdot,z_{j}) \,Y_{j} \\ &+&   \,  \frac{\omega^{2} \, \rho_{1}}{k_{1}} \;  \sum_{j=1 \atop j \neq m}^{M} \int_{D_{j}} \int_{0}^{1} \underset{y}{\nabla} G(\cdot,z_{j}+t(y-z_{j})) \cdot (y - z_{j}) \, dt  \, v^{g}_{j}(y) \, dy \\ &+& \, \frac{\omega^{2} \, \rho_{1}}{k_{1}} \;  \int_{D_{m}} \mathcal{R}(\cdot,y) \, v^{g}_{m}(y) \, dy \, - \, \omega^{2} \, \int_{D_{m}} G(\cdot,z_{m}) \, n^{2}(y) \, v_{m}^{g}(y) \, dy  \\
&-& \frac{\alpha \, k_{1}}{\rho_{1}} \, \sum_{j=1 \atop j \neq m}^{M} G(\cdot,z_{j}) \, n^{2}(z_{j}) \, Y_{j} \\
&-& \, \omega^{2} \, \sum_{j=1 \atop j \neq m}^{M} \int_{D_{j}} \int_{0}^{1} \nabla\left(G(\cdot,\cdot)n^{2}(\cdot) \right)(z_{j}+t(y-z_{j})) \cdot (y-z_{j}) \, dt \, v_{j}^{g}(y) \, dy
\Bigg](x), 
\end{eqnarray*}
with $x \in D_{m}$. Taking the $\mathbb{L}^{2}(D_{m})$-norm, using $(\ref{Rvf})$, as well as $\rho_{1} \, \sim \, 1, \, k_{1} \, \sim \, a^{2}$, and 
\begin{equation*}
 \left\Vert \left( I - \dfrac{\omega^{2} \, \rho_{1}}{k_{1}} \; N_{D_{m}} \right)^{-1} \right\Vert_{\mathcal{L}(\mathbb{L}^{2}(D_{m});\mathbb{L}^{2}(D_{m}))} \lesssim a^{-h},   
\end{equation*}
proved for the case of one droplet, see  $(\ref{LREqua1044})$, we get: 
\begin{eqnarray*}
\left\Vert v^{g}_{m} \right\Vert_{\mathbb{L}^{2}(D_{m})}  & \lesssim & a^{-h} \; \left\Vert S_{m} \right\Vert_{\mathbb{L}^{2}(D_{m})} + a^{-h} \, \left\vert \alpha \right\vert \,   \sum_{j=1 \atop j \neq m}^{M} \left\Vert G(\cdot,z_{j})  \right\Vert_{\mathbb{L}^{2}(D_{m})} \, \left\vert Y_{j} \right\vert \\ &+& \, a^{-2-h} \;  \sum_{j=1 \atop j \neq m}^{M} \left\Vert \int_{D_{j}} \int_{0}^{1} \underset{y}{\nabla} G(\cdot,z_{j}+t(y-z_{j})) \cdot (y-z_{j}) \, dt \,   v^{g}_{j}(y) \, dy \right\Vert_{\mathbb{L}^{2}(D_{m})}  \\ &+& \, a^{\frac{2(1-h)}{3}} \, \left\Vert v^{g}_{m} \right\Vert_{\mathbb{L}^{2}(D_{m})} \, + \, a^{-h} \, \left\Vert  \int_{D_{m}} G(\cdot,y) n^{2}(y)  \, v_{m}^{g}(y) \, dy \right\Vert_{\mathbb{L}^{2}(D_{m})} \\ 
&+& a^{2-h} \, \left\vert \alpha \right\vert \,  \sum_{j=1 \atop j \neq m}^{M} \left\Vert G(\cdot,z_{j}) \right\Vert_{\mathbb{L}^{2}(D_{m})}  \, \left\vert Y_{j} \right\vert \\
&+& \, a^{-h} \, \sum_{j=1 \atop j \neq m}^{M} \left\Vert \int_{D_{j}} \int_{0}^{1} \nabla\left(G(\cdot,\cdot)n^{2}(\cdot) \right)(z_{j}+t(y-z_{j})) \cdot (y-z_{j}) \, dt \, v_{j}^{g}(y) \, dy \right\Vert_{\mathbb{L}^{2}(D_{m})}.
\end{eqnarray*}
Now, by estimating the terms containing the Green's kernel $G(\cdot,\cdot)$ appearing on the right hand side of the above inequality, and using the fact that $h \, < \, 1$, the previous inequality can be reduced to,
\begin{eqnarray*}
\left\Vert v^{g}_{m} \right\Vert_{\mathbb{L}^{2}(D_{m})}  & \overset{(\ref{SG})}{\lesssim} & a^{-h} \; \left\Vert S_{m} \right\Vert_{\mathbb{L}^{2}(D_{m})} + a^{\frac{3}{2}-h} \, \left\vert \alpha \right\vert \; \left( \sum_{j=1 \atop j \neq m}^{M} \frac{1}{\left\vert z_{m} \, - \, z_{j} \right\vert^{2}} \right)^{\frac{1}{2}} \, \left( \sum_{j=1 \atop j \neq m}^{M} \left\vert Y_{j} \right\vert^{2} \right)^{\frac{1}{2}} \\ &+& \, a^{2-h} \;  \left( \sum_{j=1 \atop j \neq m}^{M} \frac{1}{\left\vert z_{m} \, - \, z_{j} \right\vert^{4}} \right)^{\frac{1}{2}} \; \left( \sum_{j=1 \atop j \neq m}^{M} \left\Vert v^{g}_{j} \right\Vert^{2}_{\mathbb{L}^{2}(D_{j})} \right)^{\frac{1}{2}} \\ 
&+& a^{2-h} \, \left\Vert v_{m}^{g} \right\Vert_{\mathbb{L}^{2}(D_{m})} \, + \, a^{\frac{7}{2}-h} \, \left\vert \alpha \right\vert \, \left( \sum_{j=1 \atop j \neq m}^{M} \frac{1}{\left\vert z_{m} - z_{j} \right\vert^{2}} \right)^{\frac{1}{2}} \, \left( \sum_{j=1 \atop j \neq m}^{M} \left\vert Y_{j} \right\vert^{2} \right)^{\frac{1}{2}} \\
&+& \, a^{4-h} \;  \left( \sum_{j=1 \atop j \neq m}^{M} \frac{1}{\left\vert z_{m} \, - \, z_{j} \right\vert^{4}} \right)^{\frac{1}{2}} \; \left( \sum_{j=1 \atop j \neq m}^{M} \left\Vert v^{g}_{j} \right\Vert^{2}_{\mathbb{L}^{2}(D_{j})} \right)^{\frac{1}{2}},
\end{eqnarray*}
which, by using $(\ref{SID})$, can be reduced to
\begin{equation*}
\left\Vert v^{g}_{m} \right\Vert_{\mathbb{L}^{2}(D_{m})}   \lesssim  a^{-h} \; \left\Vert S_{m} \right\Vert_{\mathbb{L}^{2}(D_{m})} + a^{\frac{1}{2}-h} \, \left\vert \alpha \right\vert \, d^{-\frac{3}{2}} \, \left( \sum_{j=1}^{M} \left\vert \, Y_{j} \right\vert^{2} \right)^{\frac{1}{2}} + \, a^{2-h} \;  d^{-2} \; \left( \sum_{j=1 \atop j \neq m}^{M} \left\Vert v^{g}_{j} \right\Vert^{2}_{\mathbb{L}^{2}(D_{j})} \right)^{\frac{1}{2}}.
\end{equation*}
Besides, taking the square in both sides of the above equation, using the fact that $d \, \sim \, a^{\frac{(1-h)}{3}}$, see $(\ref{dmin})$, and the estimation $\alpha \, \sim \, a^{1-h}$, see $(\ref{EstimationRealpha})$, we deduce 
\begin{equation*}
\left\Vert v^{g}_{m} \right\Vert^{2}_{\mathbb{L}^{2}(D_{m})}   \lesssim  a^{-2h} \; \left\Vert S_{m} \right\Vert^{2}_{\mathbb{L}^{2}(D_{m})} + a^{4-3h} \, \sum_{j=1}^{M} \left\vert Y_{j} \right\vert^{2}  \, + \, a^{\frac{2(4-h)}{3}}  \;  \sum_{j=1}^{M} \left\Vert v^{g}_{j} \right\Vert^{2}_{\mathbb{L}^{2}(D_{j})},
\end{equation*}
which, by summing up with respect to the index $m$ gives us,
\begin{equation*}
\sum_{m=1}^{M} \left\Vert v^{g}_{m} \right\Vert^{2}_{\mathbb{L}^{2}(D_{m})}   \lesssim  a^{-2h} \; \sum_{m=1}^{M} \left\Vert S_{m} \right\Vert^{2}_{\mathbb{L}^{2}(D_{m})} + \, a^{4-3h} \, M \,  \sum_{j=1}^{M} \left\vert Y_{j} \right\vert^{2}  \, + \, a^{\frac{2(4-h)}{3}} \, M \;  \sum_{j=1}^{M} \left\Vert v^{g}_{j} \right\Vert^{2}_{\mathbb{L}^{2}(D_{j})}.
\end{equation*}
Since $M \, \sim \, a^{h-1}$, see $(\ref{M-})$, we obtain  
\begin{equation*}
\sum_{m=1}^{M} \left\Vert v^{g}_{m} \right\Vert^{2}_{\mathbb{L}^{2}(D_{m})}   \lesssim  a^{-2h} \; \sum_{m=1}^{M} \left\Vert S_{m} \right\Vert^{2}_{\mathbb{L}^{2}(D_{m})} + a^{3-2h} \,  \sum_{j=1}^{M} \left\vert Y_{j} \right\vert^{2} \, + \, a^{\frac{5}{3}(1-h)} \, \sum_{j=1}^{M} \left\Vert v^{g}_{j} \right\Vert^{2}_{\mathbb{L}^{2}(D_{j})},
\end{equation*}
which, by knowing that $h < 1$, can be reduced to
\begin{eqnarray}\label{Equa0812}
\nonumber
\sum_{m=1}^{M} \left\Vert v^{g}_{m} \right\Vert^{2}_{\mathbb{L}^{2}(D_{m})}  & \lesssim & a^{-2h} \; \sum_{m=1}^{M} \left\Vert S_{m} \right\Vert^{2}_{\mathbb{L}^{2}(D_{m})} + a^{3-2h} \,  \sum_{j=1}^{M} \left\vert Y_{j} \right\vert^{2} \\ 
\left\Vert v^{g} \right\Vert^{2}_{\mathbb{L}^{2}(D)} \,  & \overset{(\ref{Equa0454})}{\underset{(\ref{NMP1})}{\lesssim}} & \, a^{-2h} \; \left\Vert \mathscr{S}(g) \right\Vert^{2}_{\mathbb{L}^{2}(D)} \, + \,  a^{\frac{(5-2h)}{3}} \; \left\Vert g \right\Vert^{2}_{\mathbb{H}^{-\frac{1}{2}}\left( \partial \Omega \right)}.
\end{eqnarray}
Let us estimate $\left\Vert \mathscr{S}(g) \right\Vert^{2}_{\mathbb{L}^{2}(D)}$. To do this, as done for the case of a single droplet, see $(\ref{LGEqua1028})-(\ref{JPTSM})$, by using the Holder inequality, we can derive the following inequalities 
\begin{eqnarray*}\label{Equa0809}
\nonumber
    \left\Vert \mathscr{S}(g) \right\Vert^{2}_{\mathbb{L}^{2}(D)} & \leq & \sum_{m=1}^{M} \left\Vert \mathscr{S}(g) \right\Vert^{2}_{\mathbb{L}^{6}(D_{m})} \, \left\Vert 1 \right\Vert^{2}_{\mathbb{L}^{3}(D_{m})} \\ \nonumber
    &\lesssim& a^{2} \, \sum_{m=1}^{M}  \, \left[\int_{D_{m}} \left\vert \int_{\partial \Omega} G(x,y) \, g(y) \, dy \right\vert^{6} \; dx \right]^{\frac{1}{3}} \\ \nonumber
    & \leq & a^{2} \, \left\Vert g \right\Vert^{2}_{\mathbb{H}^{-\frac{1}{2}}(\partial \Omega)}  \; \sum_{m=1}^{M} \left[\int_{D_{m}} \left\Vert G(x,\cdot)  \right\Vert^{6}_{\mathbb{H}^{\frac{1}{2}}(\partial \Omega)} \; dx \right]^{\frac{1}{3}} \\ \nonumber
     & \underset{(\ref{JPTSM})}{\overset{(\ref{Equa1013})}{\leq}} & a^{2} \, \left\Vert g \right\Vert^{2}_{\mathbb{H}^{-\frac{1}{2}}(\partial \Omega)}  \; \sum_{m=1}^{M} \left( \int_{D_{m}} \int_{\Omega^{\diamond}} \frac{1}{\left\vert x - y \right\vert^{12}} \, dy \, dx \right)^{\frac{1}{3}} \\ \nonumber 
    &\lesssim & a^{3} \, \left\Vert g \right\Vert^{2}_{\mathbb{H}^{-\frac{1}{2}}(\partial \Omega)}  \; \sum_{m=1}^{M} \frac{1}{\dist^{4}(D_{m};\partial \Omega)} \\ \nonumber
    &\overset{(\ref{SID+})}{\lesssim}& a^{3} \, \left\Vert g \right\Vert^{2}_{\mathbb{H}^{-\frac{1}{2}}(\partial \Omega)}  \; d^{-4} \\
    &=& \left(a^{\frac{(5+4h)}{3}} \, \left\Vert g \right\Vert^{2}_{\mathbb{H}^{-\frac{1}{2}}(\partial \Omega)} \right).
\end{eqnarray*}
\end{enumerate} 
Thus, by plugging the above estimation into $(\ref{Equa0812})$, we obtain 
\begin{equation}\label{ASEqua1304}
\left\Vert v^{g} \right\Vert_{\mathbb{L}^{2}(D)}   \lesssim  a^{\frac{(5-2h)}{6}} \; \left\Vert g \right\Vert_{\mathbb{H}^{-\frac{1}{2}}(\partial \Omega)}. 
\end{equation}
This ends the proof of \textbf{Lemma \ref{ADZ-Lemma}}.
\subsection{Proof of Lemma $\ref{EstimationRealphaImalpha}$}\label{ESC}
We know that, for $m$ fixed, 
\begin{equation*}
\alpha := \int_{D_{m}} W_{m}(x) \, dx  =   \int_{D_{m}} \left( \dfrac{k_{1}}{\omega^{2} \, \rho_{1}} \; I -  N_{D_{m}} \right)^{-1}\left( 1 \right)(x) \; dx.
\end{equation*}
By expanding the constant function 1 over the basis of the Newtonian operator $N_{D_{m}}(\cdot)$ we obtain 
\begin{equation*}\label{DispersionEqua}
\alpha  =  \sum_{n} \; \langle 1 ; e_{n} \rangle_{\mathbb{L}^{2}(D_{m})} \; \int_{D_{m}} \left( \dfrac{k_{1}}{\omega^{2} \, \rho_{1}} \; I -  N_{D_{m}} \right)^{-1}\left( e_{n} \right)(x) \; dx =  \sum_{n} \; \left( \langle 1 ; e_{n} \rangle_{\mathbb{L}^{2}(D_{m})} \right)^{2} \;  \frac{\omega^{2} \, \rho_{1}}{\left( k_{1} - \omega^{2} \, \rho_{1} \, \lambda^{D_{m}}_{n}\right)}. 
\end{equation*}
We choose $\omega$ solution of the coming  dispersion equation 
\begin{equation*}
k_{1} - \omega^{2} \; \rho_{1} \, \lambda^{D_{m}}_{n_{0}} =  c_{n_0} \, a^{2+h}, \quad c_{n_{0}} \in \mathbb{R}.
\end{equation*}
By solving the previous quadratic equation we obtain 
\begin{equation*}\label{DispersionEquation}
\omega^{2} \, = \, \frac{k_{1} \, - \, c_{n_{0}} \; a^{2+h}}{\rho_{1} \, \lambda^{D_{m}}_{n_{0}}}.
\end{equation*}
Hence
\begin{equation}\label{closeres}
    \left\vert k_{1} - \omega^{2} \, \rho_{1} \, \lambda^{D_{m}}_{n} \right\vert = \begin{cases}
			a^{2+h}, & \text{if $n = n_{0}$}\\
            a^{2}, & \text{otherwise}
		 \end{cases}.
\end{equation}
Then,
\begin{equation*}\label{alpha=n0+Remainder}
\alpha  =  \left( \langle 1 ; e_{n_{0}} \rangle_{\mathbb{L}^{2}(D_{m})} \right)^{2} \;  \;  \frac{\omega^{2} \, \rho_{1}}{\left( k_{1} - \omega^{2} \, \rho_{1} \, \lambda^{D_{m}}_{n_{0}}\right)} + \sum_{n \neq n_{0}} \; \left( \langle 1 ; e_{n} \rangle_{\mathbb{L}^{2}(D_{m})} \right)^{2} \; \;  \frac{\omega^{2} \, \rho_{1}}{\left( k_{1} - \omega^{2}  \, \rho_{1} \, \lambda^{D_{m}}_{n}\right)}  
\end{equation*}
We estimate the second term on the R.H.S as
\begin{equation*}
\left\vert T_{R.H.S} \right\vert  \lesssim  \sum_{n \neq n_{0}} \frac{\left\vert \langle 1 ; e_{n} \rangle_{\mathbb{L}^{2}(D_{m})} \right\vert^{2}}{ \left\vert k_{1} - \omega^{2} \, \rho_{1} \, \lambda^{D_{m}}_{n} \right\vert} \overset{(\ref{closeres})}{\lesssim}  a^{-2} \; \left\Vert 1 \right\Vert^{2}_{\mathbb{L}^{2}(D_{m})}  = \mathcal{O}\left( a \right),
\end{equation*}
then 
\begin{equation*}\label{alphabtreim}
\alpha  = \left( \langle 1 ; e_{n_{0}} \rangle_{\mathbb{L}^{2}(D_{m})} \right)^{2} \; \frac{\omega^{2} \, \rho_{1}}{\left( k_{1} - \omega^{2} \; \rho_{1} \, \lambda^{D_{m}}_{n_{0}}  \right)}  + \mathcal{O}\left( a \right). 
\end{equation*}
Knowing that $\langle 1 ; e_{n_{0}} \rangle_{\mathbb{L}^{2}(D_{m})} = a^{\frac{3}{2}} \, \langle 1 ; \overline{e}_{n_{0}} \rangle_{\mathbb{L}^{2}(B)}$, and using the fact that \\ $k_{1} = a^{2} \, k_{0};\, \lambda^{D_{m}}_{n_{0}} = a^{2} \, \lambda_{n_{0}}^{B}$, we rewrite the previous equation like
\begin{equation}\label{Scattering-coefficient}
\alpha \, = \, \frac{\left( k_{0} - c_{n_{0}} \; a^{h} \right)}{\lambda_{n_{0}}^{B} \; c_{n_0} } \; \left( \langle 1 ; \overline{e}_{n_{0}} \rangle_{\mathbb{L}^{2}(B)} \right)^{2} \; a^{1-h} + \mathcal{O}\left( a \right) \, = \, \frac{k_{0}}{\lambda_{n_{0}}^{B} \; c_{n_0} } \; \left( \langle 1 ; \overline{e}_{n_{0}} \rangle_{\mathbb{L}^{2}(B)} \right)^{2} \; a^{1-h} + \mathcal{O}\left( a \right).
\end{equation}
We set $P^{2}$ to be the scaled dominant part of $\alpha$, i.e., 
\begin{equation*}\label{DefP2}
P^{2} := \frac{- \, k_{0} \; \left( \langle 1 ; \overline{e}_{n_{0}} \rangle_{\mathbb{L}^{2}(B)} \right)^{2}}{\lambda_{n_{0}}^{B} \; c_{n_0} }, 
\end{equation*}
and we end up with the following formula  
\begin{equation*}
\alpha  \, = \, - \, P^{2} \, a^{1-h} +\mathcal{O}\left(a \right). 
\end{equation*}
To estimate $\left\Vert W_{m} \right\Vert_{\mathbb{L}^{2}(D_{m})}$, we use the same above arguments to derive
\begin{equation*}
    \left\Vert W_{m} \right\Vert^{2}_{\mathbb{L}^{2}(D_{m})} \,  =  \, \sum_{n} \frac{\left\vert \omega^{2} \, \rho_{1} \right\vert^{2}}{\left\vert k_{1} \, - \, \omega^{2} \, \rho_{1} \, \lambda^{D_{m}}_{n} \right\vert^{2}} \, \left\vert \langle 1 ; e_{n} \rangle_{\mathbb{L}^{2}(D_{m})} \right\vert^{2} \,
     \overset{(\ref{closeres})}{\lesssim} \, a^{-2(2+h)} \, \left\Vert 1 \right\Vert^{2}_{\mathbb{L}^{2}(D_{m})}.
\end{equation*}
Hence, 
\begin{equation*}
    \left\Vert W_{m} \right\Vert_{\mathbb{L}^{2}(D_{m})} \,  \lesssim \, a^{-(2+h)} \, \left\Vert 1 \right\Vert_{\mathbb{L}^{2}(D_{m})} \, = \, \mathcal{O}\left( a^{- (\frac{1}{2}+h)} \right).
\end{equation*}    
This concludes the proof of \textbf{Lemma $\ref{EstimationRealphaImalpha}$}.
\subsection{Proof of Lemma \ref{Lemma51}}\label{Prooflemma51}
We compute the $\mathbb{L}^{2}(\Omega)$-norm of the term $T_{1}(\cdot)$ defined by $(\ref{DefT1})$. 
\begin{equation*}
     \left\Vert T_{1} \right\Vert^{2}_{\mathbb{L}^{2}(\Omega)}  :=  \int_{\Omega} \left\vert \int_{\Omega} G(y,x) \, \boldsymbol{Y}(x) \; dx - \sum_{m=1}^{M} \, \chi_{\Omega_{m}}(y) \sum_{j = 1 \atop j \neq m}^{M} \int_{\Omega} G(z_{m};z_{j}) \,  \chi_{\Omega_{j}}(x) \; \frac{1}{\beta_{j}} \, Y_{j} \; dx  \right\vert^{2} \, dy.
\end{equation*}
In contrary to \textbf{Subsection \ref{SubSection43}}, where the cutting of $\Omega$ onto $\underset{j=1}{\overset{M}{\cup}} \Omega_{j}$ and $\underset{j=1}{\overset{\aleph}{\cup}} \Omega_{j}^{\star}$ was of capital importance to derive the exact dominant term related to $\int_{\Omega_{j}} u^{g}(x) \, dx$, for $1 \leq j \leq M$. Here, we need only to estimate functions  (not to extract dominant term) defined in $\Omega$, thus involving both $\underset{j=1}{\overset{M}{\cup}} \Omega_{j}$ and $\underset{j=1}{\overset{\aleph}{\cup}} \Omega_{j}^{\star}$.  Because, for every $1 \leq j \leq M$ and $1 \leq k \leq \aleph$, we have $\left\vert \Omega_{j} \right\vert \sim a^{1-h} \sim \left\vert \Omega_{k}^{\star} \right\vert$, we do not need to specify, in our comping computations, if we are dealing with $\left\{ \Omega_{j} \right\}_{j=1}^{M}$ or $\left\{ \Omega_{j}^{\star} \right\}_{j=1}^{\aleph}$. Moreover, to write short, we use the notation $\Omega_{j}$ for the domains  $\Omega_{j}^{\star}$. Then, 
    \begin{eqnarray*}
    \left\Vert T_{1} \right\Vert^{2}_{\mathbb{L}^{2}(\Omega)}    & = & \int_{\Omega} \left\vert \sum_{m=1}^{M} \, \left( \int_{\Omega} G(y,x) \, \boldsymbol{Y}(x) \; dx -   \sum_{j = 1 \atop j \neq m}^{M} \int_{\Omega} G(z_{m};z_{j}) \,  \chi_{\Omega_{j}}(x) \; \frac{1}{\beta_{j}} \, Y_{j} \; dx \right) \chi_{\Omega_{m}}(y) \right\vert^{2} \, dy \\
        & = & \sum_{m=1}^{M} \, \int_{\Omega_{m}} \left\vert  \int_{\Omega} G(y,x) \, \boldsymbol{Y}(x) \; dx -   \sum_{j = 1 \atop j \neq m}^{M} \int_{\Omega} G(z_{m};z_{j}) \,  \chi_{\Omega_{j}}(x) \; \frac{1}{\beta_{j}} \, Y_{j} \; dx  \right\vert^{2} \, dy.
    \end{eqnarray*}
Using the definition of $\boldsymbol{Y}(\cdot)$, see  $(\ref{DefYDefS})$, and the triangular inequality we rewrite the previous equation as 
    \begin{eqnarray}\label{DTEqua1404}
    \nonumber
        \left\Vert T_{1} \right\Vert^{2}_{\mathbb{L}^{2}(\Omega)} 
        & \lesssim & \sum_{m=1}^{M} \left\vert Y_{m} \right\vert^{2} \, \left\vert \Omega_{m} \right\vert \, \int_{\Omega_{m}} \int_{\Omega_{m}} \left\vert G(y,x) \right\vert^{2} \; dx \, dy \\ &+& \sum_{m=1}^{M} \,     \sum_{j = 1 \atop j \neq m}^{M} \left\vert Y_{j} \right\vert^{2} \;\sum_{j = 1 \atop j \neq m}^{M} \left\vert \Omega_{j} \right\vert \int_{\Omega_{m}} \, \int_{\Omega_{j}} \left\vert G(y,x)  -   G(z_{m};z_{j}) \, \frac{1}{\beta_{j}}\,  \right\vert^{2} \; dx \; dy. 
    \end{eqnarray}
    Furthermore, by using $(\ref{DTBeta_m})$, we have 
    \begin{equation*}
        \left\vert G(y,x)  -   G(z_{m};z_{j}) \, \frac{1}{\beta_{j}}\,  \right\vert^{2} \, \lesssim \, \left\vert G(y,x)  -   G(z_{m};z_{j}) \,  \right\vert^{2} \, + \, a^{\frac{4(1-h)}{3}} \, \left\vert G(z_{m};z_{j}) \,  \right\vert^{2}.
    \end{equation*}
    Hence, by plugging the above estimation into $(\ref{DTEqua1404})$, we obtain 
        \begin{eqnarray*}
        \left\Vert T_{1} \right\Vert^{2}_{\mathbb{L}^{2}(\Omega)} 
        & \leq & \sum_{m=1}^{M} \left\vert Y_{m} \right\vert^{2} \, \Bigg[ \underset{1 \leq m \leq M}{\max} \left( \left\vert \Omega_{m} \right\vert \, \int_{\Omega_{m}} \int_{\Omega_{m}} \left\vert G(y,x) \right\vert^{2} \; dx \, dy \right) \\ &+& \sum_{m=1}^{M}  \sum_{j = 1 \atop j \neq m}^{M} \left\vert \Omega_{j} \right\vert \int_{\Omega_{m}} \int_{\Omega_{j}} \left\vert G(y,x)  -   G(z_{m};z_{j}) \right\vert^{2}  dx  dy\Bigg] 
        +  a^{\frac{10(1-h)}{3}} \sum_{j=1}^{M} \left\vert Y_{j} \right\vert^{2} \sum_{j=1 \atop j \neq m}^{M} \left\vert G(z_{m};z_{j}) \right\vert^{2}, 
    \end{eqnarray*}
Using Taylor expansion for the function $G(\cdot;\cdot)$, near the centers, we get 
\begin{equation*}
    G(y,x)-G(z_{m};z_{j}) \, = \, \int_{0}^{1} \nabla G(z_{m};z_{j}+t(x-z_{j})) \cdot (x-z_{j}) \; dt \, + \,  \int_{0}^{1} \nabla G(z_{m}+t(y-z_{m});x) \cdot (y-z_{m}) \; dt. 
\end{equation*}
We plug the previous expansion into the previous estimation and we use $(\ref{SG})$ to reduce the previous estimation to:

        \begin{eqnarray*}
        \left\Vert T_{1} \right\Vert^{2}_{\mathbb{L}^{2}(\Omega)} 
        & \lesssim & \sum_{m=1}^{M} \left\vert Y_{m} \right\vert^{2} \, \Bigg[ \underset{1 \leq m \leq M}{\max} \left( \left\vert \Omega_{m} \right\vert \, \int_{\Omega_{m}} \int_{\Omega_{m}} \frac{1}{\left\vert y \, - \, x \right\vert^{2}}  \; dx \, dy \right) \\ &+& \sum_{m=1}^{M}  \;\sum_{j = 1 \atop j \neq m}^{M} \left\vert \Omega_{j} \right\vert \int_{\Omega_{m}} \, \int_{\Omega_{j}} \frac{\left\vert x - z_{j} \right\vert^{2}}{\left\vert y - z_{j} \right\vert^{4}}  \; dx \; dy\Bigg] + a^{\frac{10(1-h)}{3}} \, \sum_{j=1}^{M} \left\vert Y_{j} \right\vert^{2} \, \sum_{j=1 \atop j \neq m}^{M} \frac{1}{\left\vert z_{m} - z_{j} \right\vert^{2}}. 
    \end{eqnarray*}
    Besides, by knowing that $\left\vert \Omega_{m} \right\vert \, = \, a^{1-h}$, with $1 \leq m \leq M$, we deduce the following estimation  
    \begin{equation*}
        \underset{1 \leq m \leq M}{\max} \left( \left\vert \Omega_{m} \right\vert \, \int_{\Omega_{m}} \int_{\Omega_{m}} \frac{1}{\left\vert y \, - \, x \right\vert^{2}}  \; dx \, dy \right) \, \lesssim \, a^{\frac{7}{3}(1-h)}.
    \end{equation*}
    Then, 
    \begin{eqnarray*}
        \left\Vert T_{1} \right\Vert^{2}_{\mathbb{L}^{2}(\Omega)} 
         & \lesssim & \sum_{m=1}^{M} \left\vert Y_{m} \right\vert^{2} \, \left[ a^{\frac{7}{3}(1-h)} \, + \, \underset{1 \leq j \leq M}{\max} \left( \left\vert \Omega_{j} \right\vert  \right) \, \sum_{m=1}^{M}  \;\sum_{j = 1 \atop j \neq m}^{M} \, \int_{\Omega_{m}} \frac{1}{\left\vert y - z_{j} \right\vert^{4}} \, dy \,   \int_{\Omega_{j}} \left\vert x - z_{j} \right\vert^{2}  \; dx \right] \\
         &+& a^{\frac{10(1-h)}{3}} \, \sum_{j=1}^{M} \left\vert Y_{j} \right\vert^{2} \, \sum_{j=1 \atop j \neq m}^{M} \frac{1}{\left\vert z_{m} - z_{j} \right\vert^{2}}.
    \end{eqnarray*}
    In addition, by Taylor expansion, we have 
    \begin{equation*}
        \int_{\Omega_{m}} \frac{1}{\left\vert y - z_{j} \right\vert^{4}} \, dy \, = \, \frac{1}{\left\vert z_{m} - z_{j} \right\vert^{4}} \, \left\vert \Omega_{m} \right\vert \,+ \, \int_{\Omega_{m}} \int_{0}^{1} \nabla\left( \left\vert \cdot - z_{j} \right\vert^{-4} \right)\left(z_{m}+t(y-z_{m})\right) \cdot (y-z_{m}) \, dt \, dy,
    \end{equation*}
    hence 
        \begin{eqnarray*}
        \left\Vert T_{1} \right\Vert^{2}_{\mathbb{L}^{2}(\Omega)} 
         & \lesssim & \sum_{m=1}^{M} \left\vert Y_{m} \right\vert^{2} \, \left[ a^{\frac{7}{3}(1-h)} \, + \, a^{(1-h)} \, \underset{1 \leq j \leq M}{\max} \left(  \int_{\Omega_{j}} \left\vert x - z_{j} \right\vert^{2}  \; dx \right) \, \sum_{m=1}^{M} \left\vert \Omega_{m} \right\vert  \, \sum_{j = 1 \atop j \neq m}^{M} \,  \frac{1}{\left\vert z_{m} - z_{j} \right\vert^{4}} \,  \right] \\
         &+& a^{\frac{10(1-h)}{3}} \, \sum_{j=1}^{M} \left\vert Y_{j} \right\vert^{2} \, \sum_{j=1 \atop j \neq m}^{M} \frac{1}{\left\vert z_{m} - z_{j} \right\vert^{2}}
    \end{eqnarray*}
    The following estimations hold
    \begin{eqnarray*}
         \underset{1 \leq j \leq M}{\max} \left(  \int_{\Omega_{j}} \left\vert x - z_{j} \right\vert^{2}  \; dx \right) \, &=& \, \mathcal{O}\left( a^{\frac{5}{3}(1-h)} \right) \\
         \sum_{m=1}^{M} \left\vert \Omega_{m} \right\vert  \, \sum_{j = 1 \atop j \neq m}^{M} \,  \frac{1}{\left\vert z_{m} - z_{j} \right\vert^{4}} \, & \overset{(\ref{SID})}{\lesssim} & \, d^{-4} \, \sum_{m=1}^{M} \left\vert \Omega_{m} \right\vert \, = \, \mathcal{O}\left( d^{-4} \right) \\
         \sum_{j=1 \atop j \neq m}^{M} \frac{1}{\left\vert z_{m} - z_{j} \right\vert^{2}} & \overset{(\ref{SID})}{\lesssim} & \mathcal{O}\left( d^{-3} \right),
    \end{eqnarray*}
    then, by recalling that $d \, \sim \, a^{\frac{1}{3}(1-h)}$ and using the fact that $\left\vert \Omega_{m_{0}} \right\vert \, =  \, a^{1-h}$, we obtain 
    \begin{equation*}
        \left\Vert T_{1} \right\Vert^{2}_{\mathbb{L}^{2}(\Omega)} 
          \lesssim  \sum_{m=1}^{M} \left\vert Y_{m} \right\vert^{2} \, \left[ a^{\frac{7}{3}(1-h)} \, + \, a^{\frac{4}{3}(1-h)}  \,  \right] \, = \, \mathcal{O}\left(a^{\frac{4}{3}(1-h)}  \, \sum_{m=1}^{M} \left\vert Y_{m} \right\vert^{2} \right) \, \overset{(\ref{Equa0714})}{=} \, \mathcal{O}\left(a^{\frac{(1-h)}{3}}  \, \left\Vert \boldsymbol{Y} \right\Vert^{2}_{\mathbb{L}^{2}(\Omega)}  \right).
    \end{equation*}
    Finally, 
     \begin{equation*}
        \left\Vert T_{1} \right\Vert_{\mathbb{L}^{2}(\Omega)} 
         \,  = \, \mathcal{O}\left(a^{\frac{(1-h)}{6}}  \, \left\Vert \boldsymbol{Y} \right\Vert_{\mathbb{L}^{2}(\Omega)}  \right).  
    \end{equation*}
This concludes the proof of \textbf{Lemma \ref{Lemma51}}. 
\subsection{Proof of \textbf{Lemma \ref{distLemma}}}\label{ProofCounitingLemma}
To prove $(\ref{SID})$ we refer the readers to \cite[Section 3.3]{Ammari_2019}. To justify $(\ref{SID+})$, we set $\bm{\Omega}_{n}$ to be 
\begin{equation*}
    \bm{\Omega}_{n} \, := \, \left\{ x \in \Omega, \quad (n-1) \, d \, \leq \, \dist\left(x, \partial \Omega \right) \, \leq \, n \, d \right\}, \quad \text{for} \quad n = 1, \cdots , \left[d^{-1} \right],
\end{equation*}
where $d$ is the minimum distance given by $(\ref{dmin})$. Then, $\Omega \, \subset \, \underset{n=1}{\overset{\left[d^{-1} \right]}{\cup}}   \bm{\Omega}_{n}$, and 
\begin{equation*}
    \left\vert \bm{\Omega}_{n} \right\vert \, \lesssim \, \left( n \, d \right)^{2} \, d \, = \, \mathcal{O}\left(n^{2} \, d^{3} \right).
\end{equation*}
Then, the number of droplets in $\bm{\Omega}_{n}$ is of order $n^{2}$. Hence, 
\begin{equation*}
    \sum_{j=1}^{M} \frac{1}{\dist^{k}\left(D_{j}; \partial \Omega \right)} \,  =  \, \sum_{n=1}^{\left[d^{-1} \right]} \sum_{D_j \subset \Omega_n} \frac{1}{\dist^{k}\left(D_{j}; \partial \Omega \right)} \, \lesssim \, \sum_{n=1}^{\left[d^{-1} \right]} n^{2} \, \frac{1}{\left((n-1) \, d\right)^{k}} \, \simeq \, \frac{1}{d^{k}} \sum_{n=1}^{\left[d^{-1} \right]}\frac{1}{n^{k-2}}. 
\end{equation*}
Therefore, 
\begin{equation*}
       \sum_{j=1}^{M} \frac{1}{\dist^{k}\left(D_{j}; \partial \Omega \right)}  \, = \, \begin{cases}
			\mathcal{O}\left(d^{-3}\right), & \text{for $k < 3$,}\\
            & \\
            \mathcal{O}\left(d^{-k}\right), & \text{for $k > 3$.}
		 \end{cases}.
\end{equation*}
This ends the proof of \textbf{Lemma \ref{distLemma}}.
\subsection{Normal derivative of $SL^{p}(\cdot)$.}\label{JSLp}
We recall, from $(\ref{Equa0848})$, the following definition of the Single-Layer operator $SL^{p}(\cdot)$ 
\begin{equation}\label{EqSL1225}
SL^{p}\left( f \right)(x) \, := \, \int_{\partial \Omega} G_{p}(x,y) \, f(y) \, d\sigma(y), \quad x \in \Omega,
\end{equation}
where $G_{p}(\cdot,\cdot)$ is solution of $(\ref{Green's-Kernel-with-P})$. The goal of this subsection is to compute the jumping coefficient of the normal derivative related to $(\ref{EqSL1225})$. To do this, we let $\Phi_{ip}(\cdot,\cdot)$ to be the fundamental solution of $\left( \Delta - P^{2} \right) \, \Phi_{ip}(\cdot,\cdot) \, = \, - \, \boldsymbol{\delta}_{\cdot}(\cdot)$, in $\mathbb{R}^{3}$. Multiplying $(\ref{Green's-Kernel-with-P})$ by $\Phi_{ip}(\cdot,\cdot)$ and integrating over $\Omega$, allows us to deduce that 
\begin{equation}\label{ASEq1232}
    G_{p}(\cdot,y) \, + \, D^{ip}\left( G_{p}(\cdot,y)\right)(\cdot) \, = \, \Phi_{ip}(\cdot,y), \quad \text{in} \;\; \Omega, 
\end{equation}
where $y \in \Omega$ is taken as a parameter, and $D^{ip}(\cdot)$ is the Double-Layer operator associated to $\Phi_{ip}(\cdot,\cdot)$. Besides, from $(\ref{ASEq1232})$, we deduce that 
\begin{equation}\label{ASEq1233}
    G_{p}(\cdot,y)  \, = \, \left(\frac{1}{2} I \, + \, K^{ip} \right)^{-1}\left( \Phi_{ip}(\cdot,y)\right), \quad \text{on} \;\;  \partial \Omega, 
\end{equation}
where $y \in \Omega$ is taken as a parameter, and $K^{ip}(\cdot)$ is the Neumann-Poincare operator associated to $\Phi_{ip}(\cdot,\cdot)$. Furthermore, by denoting $S^{ip}(\cdot)$ the Single-Layer operator associated to $\Phi_{ip}(\cdot,\cdot)$ and using $(\ref{ASEq1233})$, we deduce that 
\begin{eqnarray*}
    S^{ip}(f)(x) &:=& \int_{\partial \Omega} \Phi_{ip}(x,y) \, f(y) \, d\sigma(y), \quad x \in \Omega, \\
    &\overset{(\ref{ASEq1233})}{=}& \int_{\partial \Omega} \left(\frac{1}{2} I \, + \, K^{ip} \right)\left(G_{p}(\cdot,y)\right)(x) \, f(y) \, d\sigma(y), \quad x \in \Omega, \\
    &=& \int_{\partial \Omega} G_{p}(x,y) \, \left(\frac{1}{2} I \, + \, K^{ip} \right)^{\star}\left(f\right)(y) \, d\sigma(y), \quad x \in \Omega, \\
    &=& \int_{\partial \Omega} G_{p}(x,y) \, \left(\frac{1}{2} I \, + \, K^{-ip} \right)\left(f\right)(y) \, d\sigma(y), \quad x \in \Omega, \\
    &\overset{(\ref{EqSL1225})}{=}& SL^{p}\left( \left(\frac{1}{2} I \, + \, K^{-ip} \right)\left(f\right)\right)(x), \quad x \in \Omega.
\end{eqnarray*}
As $f(\cdot)$ is an arbitrary function, we deduce the coming relation 
\begin{equation*}
    SL^{p}\left( f \right) \, = \, S^{ip}\left(\left(\frac{1}{2} I \, + \, K^{-ip} \right)^{-1}(f)\right), \quad  \text{in} \;\; \Omega,
\end{equation*}
hence by taking the normal derivative on both sides of the above equation we obtain 
\begin{equation*}
    \frac{\partial}{\partial \nu}\left[  SL^{p}\left( f \right)\right] \, = \, \frac{\partial}{\partial \nu}\left[ S^{ip}\left(\left(\frac{1}{2} I \, + \, K^{-ip} \right)^{-1}(f)\right)\right],
\end{equation*}
which, on the right hand side, by using the jumping properties for the Single-Layer operator $S^{ip}(\cdot)$, we deduce 
\begin{eqnarray*}
    \frac{\partial}{\partial \nu}\left[  SL^{p}\left( f \right)\right] \, &=& \, \left( \frac{1}{2} \, +  \, K^{ip} \right)^{\star} \left(\left(\frac{1}{2} I \, + \, K^{-ip} \right)^{-1}(f)\right) \\
    &=& \, \left( \frac{1}{2} \, +  \, K^{-ip} \right) \, \left(\left(\frac{1}{2} I \, + \, K^{-ip} \right)^{-1}(f)\right) \; = \; f, \quad \text{on} \;\; \partial \Omega.
\end{eqnarray*}
\textcolor{black}{
\subsection{Estimating $\left\Vert \textbf{R}  \right\Vert_{\mathbb{L}^{2}(\Omega)}$}
We recall from $(\ref{DefYDefS})$ that
\begin{equation*}
       \boldsymbol{R}(\cdot) \; := \; \sum_{m=1}^{M} \chi_{\Omega_m}(\cdot) Rest_{m},
\end{equation*}
with $Rest_{m}$ is given by $
(\ref{DefRestm})$. Then, using the fact that $\Omega_{m}$'s are disjoint sets, we obtain 
\begin{equation}\label{SBMBR}
      \left\Vert \boldsymbol{R} \right\Vert^{2}_{\mathbb{L}^{2}(\Omega)}  =  \sum_{m=1}^{M} \left\vert \Omega_{m} \right\vert  \left\vert Rest_{m} \right\vert^{2}  =  \left\vert \Omega_{m_{0}} \right\vert  \sum_{m=1}^{M}   \left\vert Rest_{m} \right\vert^{2}  =  a^{(1-h)}  \sum_{m=1}^{M}  \left\vert Rest_{m} \right\vert^{2}.
\end{equation}
Besides, by taking the absolute value in both sides of $(\ref{DefRestm})$, we obtain 
\begin{eqnarray}\label{HAEqua0757}
\nonumber
\left\vert Rest_{m} \right\vert & \lesssim &  \, \sum_{j = 1 \atop j \neq m}^{M} \left\vert \int_{D_{m}} W_{m}(x)  \int_{0}^{1}   \nabla G(z_{m}+t(x-z_{m});z_{j}) \cdot (x-z_{m}) \; dt \; dx \right\vert \, \left\vert \int_{D_{j}} v_{j}^{g}(y) \, dy \right\vert  \\ \nonumber 
&+&  \sum_{j = 1 \atop j \neq m}^{M} \left\vert \int_{D_{m}} W_{m}(x) \int_{D_{j}} \int_{0}^{1}   \nabla G(x;z_{j}+t(y-z_{j})) \cdot (y-z_{j}) \; dt \, v_{j}^{g}(y) \, dy \; dx \right\vert \\ \nonumber 
&+& a^{2} \, \left\vert \int_{D_{m}} W_{m}(x) \, \int_{0}^{1}   \nabla S_{m}(z_{m}+t(x-z_{m})) \cdot (x-z_{m}) dt \, dx \right\vert \\ \nonumber
&+& \left\vert \int_{D_{m}} W_{m}(x) \, \int_{D_{m}} \int_{0}^{1} \underset{y}{\nabla} \mathcal{R}(x,z_{m}+t(y-z_{m})) \cdot (y-z_{m}) \, dt \, v^{g}_{m}(y) \, dy \, dx\right\vert \\
&+&  a^{2}  \, \left\vert \int_{D_{m}} W_{m}(x) \int_{D} G(x,y) \, v^{g}(y) \, n^{2}(y) \, dy \, dx \right\vert.
\end{eqnarray}
In addition, for the third term on the right hand side, we have 
\begin{eqnarray*}
    \zeta_{3} \, &:=& \, a^{2} \, \left\vert \int_{D_{m}} W_{m}(x) \, \int_{0}^{1}   \nabla S_{m}(z_{m}+t(x-z_{m})) \cdot (x-z_{m}) dt \, dx \right\vert \\
    & \leq & \, a^{2} \, \left\Vert  W_{m} \right\Vert_{\mathbb{L}^{2}(D_{m})} \; \left\Vert  \int_{0}^{1}   \nabla S_{m}(z_{m}+t(\cdot-z_{m})) \cdot (\cdot-z_{m}) dt  \right\Vert_{\mathbb{L}^{2}(D_{m})} \\
    & \leq & \, a^{2} \, \left\Vert  W_{m} \right\Vert_{\mathbb{L}^{2}(D_{m})} \; \left[\int_{0}^{1} \frac{1}{t} \int_{B(z_{m},ta)} \left\vert \nabla S_{m}(y) \right\vert^{2} \, \left\vert y - z_{m} \right\vert^{2} \, dy \, dt \right]^{\frac{1}{2}},
\end{eqnarray*}
where $B(z_{m},ta)$ is a ball of center $z_{m}$ and radius $t \, a$. Then, 
\begin{eqnarray*}
    \zeta_{3} \, 
    & \leq & \, a^{3} \, \left\Vert  W_{m} \right\Vert_{\mathbb{L}^{2}(D_{m})} \; \left[\int_{0}^{1}  \int_{B(z_{m},ta)} \left\vert \nabla S_{m}(y) \right\vert^{2}  \, dy \, dt \right]^{\frac{1}{2}} \\
        & \leq & \, a^{3} \, \left\Vert  W_{m} \right\Vert_{\mathbb{L}^{2}(D_{m})} \; \left[\int_{0}^{1}  \int_{D_{m}} \left\vert \nabla S_{m}(y) \right\vert^{2}  \, dy \, dt \right]^{\frac{1}{2}},
\end{eqnarray*}
as $B(z_{m},t \, a) \subset D_{m}$. Then, 
\begin{equation}\label{HAEqua0756}
    \zeta_{3} \, 
     =  \, \mathcal{O}\left(  a^{3} \, \left\Vert  W_{m} \right\Vert_{\mathbb{L}^{2}(D_{m})} \; \left\Vert  \nabla S_{m}(y) \right\Vert_{\mathbb{L}^{2}(D_{m})} \right).
\end{equation}
Now, by using $(\ref{NMP1}), (\ref{SG})$ and $(\ref{HAEqua0756})$, we derive from the inequality $(\ref{HAEqua0757})$ the coming one, 
\begin{eqnarray*}
\left\vert Rest_{m} \right\vert & \lesssim & \,  \left\Vert W_{m} \right\Vert_{\mathbb{L}^{2}(D_{m})} \, \Bigg[ a^{4}  \,  \sum_{j = 1 \atop j \neq m}^{M} \frac{1}{\left\vert z_{m} - z_{j} \right\vert^{2}} \, \left\Vert v_{j}^{g}  \right\Vert_{\mathbb{L}^{2}(D_{j})} \, + a^{3} \, \left\Vert \nabla \mathscr{S}(g) \right\Vert_{\mathbb{L}^{2}(D_{m})} \\ &+& \, a \, \left[ \int_{D_{m}} \int_{D_{m}} \left\vert \nabla \mathcal{R}(x,y) \right\vert^{2} \, dy dx  \right]^{\frac{1}{2}} \, \left\Vert v^{g}_{m} \right\Vert_{\mathbb{L}^{2}(D_{m})} + \, a^{4} \, \left\Vert v^{g}_{m} \right\Vert_{\mathbb{L}^{2}(D_{m})} \, \Bigg] \\
&+&  \, a^{(\frac{9}{2}-h)} \, \sum_{j=1 \atop j \neq m}^{M} \frac{1}{\left\vert z_{m} - z_{j} \right\vert} \, \left\Vert v^{g}_{j} \right\Vert_{\mathbb{L}^{2}(D_{j})}. 
\end{eqnarray*}
Using Cauchy–Schwarz inequality and the estimation given by $(\ref{SID})$ we deduce
\begin{eqnarray}\label{Equa1040}
\nonumber
\left\vert Rest_{m} \right\vert \, & \lesssim & \,  \left\Vert W_{m} \right\Vert_{\mathbb{L}^{2}(D_{m})} \, \left[  a^{\frac{(10+2h)}{3}}   \,  \left\Vert v^{g}  \right\Vert_{\mathbb{L}^{2}(D)} \, + \, a^{3} \, \left\Vert \nabla \mathscr{S}(g) \right\Vert_{\mathbb{L}^{2}(D_{m})}  \right] \\ \nonumber
 & + & \,  \left\Vert W_{m} \right\Vert_{\mathbb{L}^{2}(D_{m})} \, \left[ a \, \left[ \int_{D_{m}} \int_{D_{m}} \left\vert \nabla \mathcal{R}(x,y) \right\vert^{2} \, dy dx  \right]^{\frac{1}{2}}  \, + \, a^{4}  \right] \, \left\Vert v^{g}_{m} \right\Vert_{\mathbb{L}^{2}(D_{m})} \\ &+& \, a^{\frac{(8-h)}{2}} \, \left\Vert v^{g} \right\Vert_{\mathbb{L}^{2}(D)}.
\end{eqnarray}
The following estimation holds, 
\begin{eqnarray}\label{nablaS(g)Dm} 
\nonumber
     \left\Vert \nabla \mathscr{S}(g) \right\Vert_{\mathbb{L}^{2}(D_{m})} & \leq & \, \left\Vert g \right\Vert_{\mathbb{H}^{-\frac{1}{2}}(\partial \Omega)} \, \left[ \int_{D_{m}} \, \left\Vert \nabla G(x,\cdot)  \right\Vert^{2}_{\mathbb{H}^{\frac{1}{2}}(\partial \Omega)} \, dx \right]^{\frac{1}{2}} \\ \nonumber
     & \overset{\ref{Equa1014IP}}{\leq} & \, \left\Vert g \right\Vert_{\mathbb{H}^{-\frac{1}{2}}(\partial \Omega)} \, \left[ \int_{D_{m}} \, \left\Vert \nabla G(x,\cdot)  \right\Vert^{2}_{\mathbb{H}^{1}( \Omega^{\diamond})} \, dx \right]^{\frac{1}{2}} \\ \nonumber
      & \lesssim & \, \left\Vert g \right\Vert_{\mathbb{H}^{-\frac{1}{2}}(\partial \Omega)} \, \left[ \int_{D_{m}} \, \frac{1}{\dist^{6}(x,\partial \Omega)}  dx \right]^{\frac{1}{2}} \\
      & \overset{(\ref{distto})}{\lesssim} & \, \left\Vert g \right\Vert_{\mathbb{H}^{-\frac{1}{2}}(\partial \Omega)} \, \frac{a^{\frac{3}{2}}}{\dist^{3}(D_{m};\partial \Omega)}.
\end{eqnarray}
Besides, similarly to $(\ref{FphOOEqua0645})$, for the $\nabla \mathcal{R}(\cdot,\cdot)$, we can prove that  
\begin{equation*}
    \left\vert \underset{y}{\nabla} \mathcal{R}(x,y) \right\vert \,   \lesssim  \, \left( \frac{1}{\dist(x,\partial \Omega)} \right)^{\frac{2}{3}} \, \left(   \frac{1}{\dist(y,\partial \Omega)} \right)^{\frac{4}{3}}, \quad \text{for} \quad x \neq y.
\end{equation*}
Then, 
\begin{equation}\label{2xintDmnablaR}
    \int_{D_{m}} \int_{D_{m}} \left\vert \nabla \mathcal{R}(x,y) \right\vert^{2} \, dy dx \,   \lesssim  \, \int_{D_{m}} \int_{D_{m}} \frac{1}{\dist^{\frac{4}{3}}(x, \partial \Omega)} \, \frac{1}{\dist^{\frac{8}{3}}(y, \partial \Omega)}  \, dy \, dx  
      \lesssim   \, \frac{\left\vert D_{m} \right\vert^{2}}{\dist^{4}(D_{m}, \partial \Omega)}.
\end{equation}
Hence, by returning to $(\ref{Equa1040})$, and using $(\ref{nablaS(g)Dm})$ and $(\ref{2xintDmnablaR})$, we derive the following estimation,    
\begin{eqnarray*}
    \left\vert Rest_{m} \right\vert \, & \lesssim &  \,  \left\Vert W_{m} \right\Vert_{\mathbb{L}^{2}(D_{m})} \, \Bigg[  a^{\frac{(10+2h)}{3}} \, \left\Vert v^{g} \right\Vert_{\mathbb{L}^{2}(D)} \ + \, \left\Vert g \right\Vert_{\mathbb{H}^{-\frac{1}{2}}(\partial \Omega)} \, \frac{a^{\frac{9}{2}}}{\dist^{3}(D_{m};\partial \Omega)}  \Bigg] \\
    & + &  \,  \left\Vert W_{m} \right\Vert_{\mathbb{L}^{2}(D_{m})} \, a^{4} \, \frac{1}{\dist^{2}(D_{m};\partial \Omega)}  \, \left\Vert v^{g}_{m} \right\Vert_{\mathbb{L}^{2}(D_{m})} + a^{\frac{(8-h)}{2}} \, \left\Vert v^{g} \right\Vert_{\mathbb{L}^{2}(D)} \\
    &\overset{(\ref{Equa1020})}{\lesssim}& \,  \left\Vert 1 \right\Vert_{\mathbb{L}^{2}(D_{m})} \, \Bigg[  a^{\frac{(4-h)}{3}} \, \left\Vert v^{g} \right\Vert_{\mathbb{L}^{2}(D)} \ + \, \left\Vert g \right\Vert_{\mathbb{H}^{-\frac{1}{2}}(\partial \Omega)} \, \frac{a^{\frac{(5-2h)}{2}}}{\dist^{3}(D_{m};\partial \Omega)}  \Bigg] \\
    & + &  \,  \left\Vert 1 \right\Vert_{\mathbb{L}^{2}(D_{m})}  \, \frac{a^{(2-h)}}{\dist^{2}(D_{m};\partial \Omega)}  \, \left\Vert v^{g}_{m} \right\Vert_{\mathbb{L}^{2}(D_{m})} + a^{\frac{(8-h)}{2}} \, \left\Vert v^{g} \right\Vert_{\mathbb{L}^{2}(D)}.
\end{eqnarray*}
Thus, by using $(\ref{SID+})$, we obtain
\begin{equation}\label{ell2-norm-Rest}
    \sum_{m=1}^{M} \left\vert Rest_{m} \right\vert^{2} \, \lesssim \, a^{\frac{(14+h)}{3}} \, \left\Vert v^{g} \right\Vert^{2}_{\mathbb{L}^{2}(D)} \, + \, a^{6} \, \left\Vert g \right\Vert^{2}_{\mathbb{H}^{-\frac{1}{2}}(\partial \Omega)} \, \overset{(\ref{ASEqua1304})}{\lesssim} \, a^{6} \, \left\Vert g \right\Vert^{2}_{\mathbb{H}^{-\frac{1}{2}}(\partial \Omega)}. 
\end{equation}
Then, by plugging the above estimation into $(\ref{SBMBR})$, we obtain
\begin{equation}\label{SBMBR+}
      \left\Vert \boldsymbol{R} \right\Vert_{\mathbb{L}^{2}(\Omega)}  \, = \, \mathcal{O}\left( a^{\frac{(7-h)}{2}} \, \left\Vert g \right\Vert_{\mathbb{H}^{-\frac{1}{2}}\left(\partial \Omega \right)} \right).
\end{equation}}
\end{document}